\documentclass[final,3p,times]{elsarticle}

\usepackage{graphicx}%
\usepackage{multirow}%
\usepackage{amsmath,amssymb,amsfonts}%
\usepackage{amsthm}%
\usepackage{mathrsfs}%
\usepackage[title]{appendix}%
\usepackage{xcolor}%
\usepackage{textcomp}%
\usepackage{manyfoot}%
\usepackage{booktabs}%
\usepackage{algorithm}%
\usepackage{algorithmicx}%
\usepackage{algpseudocode}%
\usepackage{listings}%
\usepackage{xcolor}
\usepackage{subcaption}

\usepackage{tikz}
\usepackage{pgfplots}
\pgfplotsset{compat=newest}

\theoremstyle{definition}
\newtheorem{definition}{Definition}

\theoremstyle{plain}
\newtheorem{theorem}{Theorem}[section]
\newtheorem{lemma}{Lemma}[section]

\newtheorem{proposition}[theorem]{Proposition}

\newtheorem{example}{Example}

\numberwithin{equation}{section}

\usepackage[english]{babel}

\journal{Applied Numerical Mathematics}

\begin{document}

\begin{frontmatter}

\title{Fractional backward spectral approximation theory for weakly singular adjoint integral equations}

\author[f4]{Mahmoud A. Zaky$^{*,}$}
  \ead{ma.zaky@yahoo.com;mibrahimm@imamu.edu.sa}

\address[f4]{Department of Mathematics and Statistics, College of Science, Imam Mohammad Ibn Saud Islamic University (IMSIU), Riyadh, Saudi Arabia}

\cortext[mycorrespondingauthor]{Corresponding author: M.A. Zaky}

\begin{abstract} We introduce a new class of fractional backward orthogonal functions designed for the spectral approximation of weakly singular adjoint Volterra integral equations. These basis functions generate an approximation space that naturally reflects the terminal-endpoint singular behaviour produced by weakly singular kernels. We develop the basic approximation theory for the proposed backward orthogonal basis, including weighted projection estimates, Gauss-type interpolation estimates, inverse inequalities, and stability bounds for the associated weakly singular adjoint integral operator. The error analysis and numerical results show that the proposed backward Jacobi method is particularly suitable for solutions with terminal-endpoint weak singularities  and can recover high-order convergence rates that are typically lost when
usual polynomial approximations are applied directly to such weakly regular
solutions. \end{abstract}
\begin{keyword}
fractional orthogonal functions \sep
adjoint Volterra integral equations \sep
weakly singular kernels \sep
spectral approximation \sep
weighted Sobolev spaces \sep
terminal-endpoint singularities

\end{keyword}

\end{frontmatter}

\section{Introduction}

Orthogonal polynomial approximations are central tools in modern numerical
analysis, owing to their role in spectral methods, high-order interpolation,
quadrature theory, and approximation in weighted Sobolev spaces \cite{shen2011spectral,szeg1939orthogonal}. Classical polynomial
systems such as Legendre, Chebyshev, and Jacobi bases have been extensively
studied and successfully employed in the numerical treatment of differential,
integral, and integro-differential equations \cite{doha2019spectral}. Their
remarkable approximation properties for smooth functions, together with precise
projection and interpolation estimates, have made them indispensable tools in
the development of high-order numerical schemes \cite{diethelm2020good}. Despite this success, classical smooth polynomial bases often encounter
substantial difficulties when applied to functions possessing endpoint
singularities or limited regularity. In many important applications, including
terminal value problems, weakly singular Volterra integral equations, adjoint
Volterra integral equations, and fractional differential equations, the exact
solution typically exhibits reduced smoothness near one endpoint of the
computational interval \cite{ameen2021singularity}. Such singular behaviour
frequently leads to a severe deterioration in the convergence rate of standard
polynomial approximations, even when the global smoothness is only mildly
violated. This issue becomes particularly pronounced in spectral and
collocation methods, where the expected high-order or spectral convergence is
strongly tied to the regularity of the solution.

The numerical treatment of endpoint-singular functions has therefore attracted
considerable attention in recent years \cite{zaky2026new}. Various
singularity-resolving strategies have been proposed, including graded meshes,
singular basis enrichment, coordinate transformations, adaptive polynomial
spaces, and weighted approximation frameworks \cite{zayernouri1945spectral}.
Among these approaches, variable transformations and singularity-adapted basis
constructions have proved particularly effective, since they preserve the global
high-order structure of spectral approximations while incorporating the
intrinsic asymptotic behaviour of weakly regular solutions. In the context of fractional differential equations and weakly singular models,
several important developments have recently been made. In particular,
generalized Jacobi-type functions and transformed orthogonal systems have been
introduced to better capture one-sided singular structures and to restore
spectral accuracy for non-smooth solutions \cite{chen2016generalized}. These
developments demonstrate that suitably adapted basis systems can significantly
outperform classical smooth polynomial approximations in the presence of
endpoint singularities \cite{zaky2025high}.

Motivated by these challenges, the present work introduces a class of
fractional backward orthogonal functions designed for the approximation of
solutions with limited regularity near the terminal point. The proposed basis
is generated through a backward endpoint mapping that incorporates the terminal
singular structure directly into the approximation space. In contrast to usual
polynomial bases, the fractional backward Jacobi framework captures weak
regularity at the terminal endpoint and therefore provides an effective
approximation tool for weakly singular adjoint Volterra integral equations. More precisely, this paper develops and analyzes a fractional backward
spectral-collocation method for the weakly singular adjoint Volterra integral
equation
\begin{equation}\label{eq:3.d1R}
u(t)
=
g(t)+\int_t^1(\varrho-t)^{-\theta}K(t,\varrho)u(\varrho)\,d\varrho,
\qquad t\in J:=[0,1],
\end{equation}
where \(0<\theta<1\), \(g\in C(J)\), \(K\in C(J\times J)\), \(K(t,t)\ne 0\) for \(t\in J\), and
\(\theta\) denotes the weak-singularity exponent.

The main objective of this paper is to develop a fractional backward spectral
approximation theory for weakly singular adjoint integral equations. To this
end, we first establish the fundamental structural properties of the fractional
backward orthogonal functions, including orthogonality relations, recurrence
formulae, derivative identities, and the associated singular
Sturm--Liouville formulation. We then introduce suitable weighted Sobolev
spaces generated by the backward transformed derivative and derive projection
and Gauss-type interpolation error estimates. In addition, inverse
inequalities, weighted Sobolev estimates, and stability bounds for the weakly
singular adjoint Volterra operator are proved. These results provide the
analytical foundation for the fractional backward spectral collocation scheme
developed later in the paper.

The remainder of the paper is organized as follows.
Section~\ref{sec2} establishes the analytical properties of the fractional
backward orthogonal functions, including orthogonality relations, recurrence
formulae, derivative identities, and the associated singular
Sturm--Liouville framework. Section~\ref{sec3} develops the weighted
orthogonal projection theory and derives the corresponding approximation
estimates in weighted Sobolev spaces generated by the backward transformed
derivative \(\mathcal{D}_{\rho}\). Section~\ref{sec4} studies the associated
Gauss-type interpolation framework and proves stability estimates, inverse
inequalities, weighted Sobolev inequalities, and bounds for the weakly
singular adjoint Volterra operator. Section~\ref{sec5} constructs the
fractional backward spectral collocation method for weakly singular adjoint
Volterra integral equations and establishes convergence estimates in both the
\(L^{\infty}(J)\)-norm and the weighted
\(L^{2}_{\varkappa^{\mu,\upsilon,\rho}}(J)\)-norm. Section~\ref{sec6}
presents numerical experiments that illustrate the accuracy and convergence
behaviour of the proposed method. Finally, Section~\ref{sec7} summarizes the
main conclusions.

\section{Backward orthogonal functions}
\label{sec2}

This section collects the main structural properties of the fractional backward
orthogonal functions used throughout the paper. These functions are obtained by
composing classical Jacobi polynomials with a backward endpoint algebraic
mapping, thereby allowing the approximation space to incorporate terminal
endpoint singular behaviour explicitly.

We first define the family of backward \(\rho\)-polynomial spaces by
\begin{equation}\label{eq:R-poly-space}
P_r^\rho(J):=\mathrm{span}\Big\{1,(1-t)^{\rho},(1-t)^{2\rho},\ldots,(1-t)^{r\rho}\Big\},
\qquad J:=[0,1],\quad 0<\rho\le 1.
\end{equation}
Accordingly, any backward \(\rho\)-polynomial of degree \(r\) can be written as
\begin{equation}\label{eq:R-poly}
p_r^\rho(t):=
k_r(1-t)^{r\rho}+k_{r-1}(1-t)^{(r-1)\rho}+\cdots+k_1(1-t)^\rho+k_0,
\qquad k_r\neq 0,
\end{equation}
where \(\{k_i\}_{i=0}^r\) are real coefficients. We denote its degree by
\(\deg(p_r^\rho)=r\).

Let \(\varkappa\in L^{1}(J)\) be a positive weight function, and let
\(\{p_r^\rho\}_{r=0}^{\infty}\) be a sequence of \(\rho\)-polynomials satisfying
\(\deg(p_r^\rho)=r\). The sequence is said to be orthogonal with respect to
\(\varkappa\) if
\begin{equation}\label{eq:R-general-orth}
(p_r^\rho,p_s^\rho)_\varkappa
=
\int_0^1 p_r^\rho(t)p_s^\rho(t)\varkappa(t)\,dt
=
\beta_r\delta_{s,r},
\end{equation}
where
\[
\beta_r=(p_r^\rho,p_r^\rho)_\varkappa=\|p_r^\rho\|_{0,\varkappa}^{2}.
\]
Equivalently, the same function space can be generated by this orthogonal
basis as
\begin{equation}\label{eq:R-poly-generated}
P_r^\rho(J):=\mathrm{span}\Big\{p_0^\rho,p_1^\rho,\ldots,p_r^\rho\Big\}.
\end{equation}

The following elementary properties follow by adapting the standard arguments
for classical orthogonal polynomials; see, for example,
\cite[pp.~48--50]{shen2011spectral}.

\begin{lemma}\label{lem:R-orth-basic}
Let \(q\) be any element of \(P_r^\rho(J)\). Then \(q\) is orthogonal to
\(p_{r+1}^\rho\) with respect to the weighted inner product
\((\cdot,\cdot)_\varkappa\); that is,
\[
\bigl(p_{r+1}^\rho,q\bigr)_\varkappa=0.
\]
Consequently, \(p_{r+1}^\rho\) is orthogonal to the entire space
\(P_r^\rho(J)\) in the weighted space induced by \(\varkappa\).
\end{lemma}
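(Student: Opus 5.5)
The plan is to show that \(p_0^\rho,\ldots,p_r^\rho\) form a basis of \(P_r^\rho(J)\), to expand \(q\) in this basis, and then to invoke the orthogonality relation \eqref{eq:R-general-orth} term by term.

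\textbf{Step 1 (basis property).} We show that \(\{p_0^\rho,\ldots,p_r^\rho\}\) is a basis of \(P_r^\rho(J)\). Since \(\deg(p_s^\rho)=s\), each \(p_s^\rho\) lies in \(\mathrm{span}\{1,(1-t)^{\rho},\ldots,(1-t)^{s\rho}\}\) and has nonzero leading coefficient \(k_s\) in front of \((1-t)^{s\rho}\). The coefficient matrix expressing \(p_0^\rho,\ldots,p_r^\rho\) in terms of the monomials \((1-t)^{j\rho}\), \(j=0,\ldots,r\), is therefore lower triangular with nonzero diagonal, hence invertible. This requires the monomials \((1-t)^{j\rho}\) to be linearly independent on \(J\). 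Setting \(x=(1-t)^\rho\), which maps \(J\) bijectively onto \([0,1]\) for \(0<\rho\le 1\), reduces this to the linear independence of \(1,x,\ldots,x^r\) on \([0,1]\), a standard fact. This justifies \eqref{eq:R-poly-generated}.

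\textbf{Step 2 (expansion).} Any \(q\in P_r^\rho(J)\) can then be written as \(q=\sum_{s=0}^{r}c_s p_s^\rho\) with real coefficients \(c_s\). An alternative is to determine the \(c_s\) recursively: match the top coefficient of \(q\) by \(p_r^\rho\), then induct downward on the degree. This is the classical argument in \cite[pp.~48--50]{shen2011spectral}.

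\textbf{Step 3 (orthogonality).} By bilinearity of \((\cdot,\cdot)_\varkappa\), which is well defined because \(\varkappa\in L^1(J)\) and \(\rho\)-polynomials are bounded on \(J\),
\[
\bigl(p_{r+1}^\rho,q\bigr)_\varkappa=\sum_{s=0}^{r}c_s\bigl(p_{r+1}^\rho,p_s^\rho\bigr)_\varkappa=\sum_{s=0}^{r}c_s\,\beta_{r+1}\delta_{s,r+1}=0,
\]
since \(s\le r<r+1\). The concluding statement of the lemma follows immediately, because \(q\) was arbitrary in \(P_r^\rho(J)\).

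The only step needing any care is the linear independence in Step~1. It is handled cleanly by the substitution \(x=(1-t)^\rho\), so I do not expect a genuine obstacle.
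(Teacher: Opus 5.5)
Your proposal is correct and takes essentially the same route as the paper. The paper gives no proof of its own and defers to the classical argument in \cite[pp.~48--50]{shen2011spectral}, which is exactly your argument: the triangular degree structure makes \(\{p_s^\rho\}_{s=0}^r\) a basis of \(P_r^\rho(J)\), and you then expand \(q\) and apply orthogonality term by term. Your linear-independence check via the substitution \(x=(1-t)^\rho\) supplies the one step the paper leaves implicit.
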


\begin{lemma}\label{lem:R-monic}
Let \(\varkappa\in L^1(J)\) be a positive weight function. Then there exists
one and only one monic orthogonal system
\(\{\bar p_r^\rho\}_{r=0}^{\infty}\) of \(\rho\)-polynomials such that
\(\deg(\bar p_r^\rho)=r\) for every \(r\ge 0\). This system is determined by
\begin{equation}\label{eq:R-recurrence-0}
\bar p_0^\rho=1,\qquad
\bar p_1^\rho=(1-t)^\rho-\mu_0,
\end{equation}
and satisfies the three-term recurrence relation
\begin{equation}\label{eq:R-recurrence}
\bar p_{r+1}^\rho
=
\bigl((1-t)^\rho-\mu_r\bigr)\bar p_r^\rho
-\upsilon_r\bar p_{r-1}^\rho,
\qquad r\ge 1,
\end{equation}
where the recurrence coefficients are given by
\begin{equation}\label{eq:R-alpha}
\mu_r
=
\frac{\bigl((1-t)^\rho\bar p_r^\rho,\bar p_r^\rho\bigr)_\varkappa}
{\|\bar p_r^\rho\|_{0,\varkappa}^2},
\qquad r\ge 0,
\end{equation}
and
\begin{equation}\label{eq:R-beta}
\upsilon_r
=
\frac{\|\bar p_r^\rho\|_{0,\varkappa}^2}
{\|\bar p_{r-1}^\rho\|_{0,\varkappa}^2},
\qquad r\ge 1.
\end{equation}
\end{lemma}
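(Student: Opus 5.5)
The plan is to reduce everything to classical orthogonal-polynomial theory through the substitution \(x=(1-t)^\rho\). This map is a bijection \(J\to[0,1]\) with \(t=1-x^{1/\rho}\) and \(dt=-\rho^{-1}x^{1/\rho-1}dx\). Under it, \(P_r^\rho(J)\) becomes the ordinary polynomial space \(P_r\) in \(x\), and the weighted inner product becomes
\[
(f,g)_\varkappa=\int_0^1 f\,g\,\tilde\varkappa(x)\,dx,
\qquad
\tilde\varkappa(x):=\rho^{-1}x^{1/\rho-1}\varkappa\bigl(1-x^{1/\rho}\bigr).
\]
The new weight \(\tilde\varkappa\) is positive and lies in \(L^1(0,1)\), because \(\int_0^1\tilde\varkappa\,dx=\int_0^1\varkappa\,dt\). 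The statement is then exactly the classical existence and uniqueness theorem for monic orthogonal polynomials together with the Stieltjes recurrence (cf. \cite[pp.~48--50]{shen2011spectral}). I would nevertheless write the argument directly in the \(\rho\)-variables, since it is short.

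\emph{Existence.} I will construct the system by the recurrence itself and argue by induction on \(r\).
\begin{itemize}
\item The base cases are \(\bar p_0^\rho=1\) and \(\bar p_1^\rho=(1-t)^\rho-\mu_0\). With \(\mu_0=((1-t)^\rho,1)_\varkappa/\|1\|^2_{0,\varkappa}\), one checks directly that \((\bar p_1^\rho,1)_\varkappa=0\).
\item Assume \(\bar p_0^\rho,\dots,\bar p_r^\rho\) are monic, mutually orthogonal, and satisfy \(\deg \bar p_k^\rho=k\). Define \(\bar p_{r+1}^\rho\) by \eqref{eq:R-recurrence}. It is monic of degree \(r+1\), since \((1-t)^\rho\cdot(1-t)^{r\rho}=(1-t)^{(r+1)\rho}\).
\item Orthogonality against \(\bar p_r^\rho\) holds precisely by the choice \eqref{eq:R-alpha} of \(\mu_r\). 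Orthogonality against \(\bar p_{r-1}^\rho\) requires \(\upsilon_r=((1-t)^\rho\bar p_r^\rho,\bar p_{r-1}^\rho)_\varkappa/\|\bar p_{r-1}^\rho\|^2_{0,\varkappa}\).
\item The key identity uses symmetry of multiplication by \((1-t)^\rho\):
\[
((1-t)^\rho\bar p_r^\rho,\bar p_{r-1}^\rho)_\varkappa=(\bar p_r^\rho,(1-t)^\rho\bar p_{r-1}^\rho)_\varkappa .
\]
Now \((1-t)^\rho\bar p_{r-1}^\rho=\bar p_r^\rho+q\) with \(q\in P_{r-1}^\rho(J)\), so by Lemma~\ref{lem:R-orth-basic} the right side equals \(\|\bar p_r^\rho\|^2_{0,\varkappa}\). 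This yields \eqref{eq:R-beta}.
\item For \(k\le r-2\), the product \((1-t)^\rho\bar p_k^\rho\) lies in \(P_{k+1}^\rho(J)\subset P_{r-1}^\rho(J)\). Hence \(((1-t)^\rho\bar p_r^\rho,\bar p_k^\rho)_\varkappa=(\bar p_r^\rho,(1-t)^\rho\bar p_k^\rho)_\varkappa=0\), while the other two terms vanish by the induction hypothesis.
\item All the divisions are legitimate. Each \(\bar p_k^\rho\) is a nonzero element of \(P_k^\rho\), so it vanishes only on a null set: it is a nonzero polynomial in \(x\), and \(t\mapsto x\) is a diffeomorphism on \([0,1)\). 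Since \(\varkappa>0\), this gives \(\|\bar p_k^\rho\|_{0,\varkappa}>0\).
\end{itemize}

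\emph{Uniqueness.} Suppose \(\{\bar q_r^\rho\}\) is another monic orthogonal system. Then \(\bar p_r^\rho-\bar q_r^\rho\in P_{r-1}^\rho(J)\), because the leading terms cancel. This difference is orthogonal to \(P_{r-1}^\rho(J)\) by Lemma~\ref{lem:R-orth-basic}, since \(P_{r-1}^\rho\) is spanned by either system of lower degree. It is therefore orthogonal to itself, so its \(\varkappa\)-norm is zero, and by the positivity argument above it vanishes.

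The main technical point, rather than a real obstacle, is justifying that \(P_r^\rho(J)\) has dimension \(r+1\) and that nonzero \(\rho\)-polynomials have positive \(\varkappa\)-norm. Both follow immediately from the change of variables \(x=(1-t)^\rho\). The only other care needed is the symmetry step used to identify \(\upsilon_r\).
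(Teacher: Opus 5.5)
Your proposal is correct and follows essentially the route the paper intends. The paper gives no separate proof and simply defers to the standard monic-orthogonal-polynomial argument of \cite[pp.~48--50]{shen2011spectral}; your substitution \(x=(1-t)^\rho\) plus the inductive construction, using the symmetry of multiplication by \((1-t)^\rho\) to identify \(\upsilon_r\), is exactly that argument carried over to \(\rho\)-polynomials, and it also makes explicit the dimension count and the positivity of \(\|\bar p_k^\rho\|_{0,\varkappa}\), which the paper leaves implicit.
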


We now introduce the fractional backward orthogonal functions used in this
paper.

\begin{definition}\label{def:RJacobi}
Let \(\mu,\upsilon>-1\) and \(0<\rho\le 1\). The fractional backward
orthogonal function of degree \(r\) is defined by
\begin{equation}\label{eq:RJacobi-def}
\mathcal{P}_r^{\mu,\upsilon,\rho}(t)
=
\mathcal{P}_r^{\mu,\upsilon}\!\left(1-2(1-t)^\rho\right),
\qquad t\in J,
\end{equation}
where \(\mathcal{P}_r^{\mu,\upsilon}\) denotes the classical Jacobi polynomial
of degree \(r\) on \([-1,1]\).
\end{definition}

For \(\rho=1\), the above definition reduces to the standard shifted Jacobi
polynomial,
\[
\mathcal{P}_r^{\mu,\upsilon,1}(t)=\mathcal{P}_r^{\mu,\upsilon}(2t-1),
\]
whose corresponding shifted Jacobi weight on \(J\) is
\[
t^\upsilon(1-t)^\mu.
\]

Using the standard expansion formula for shifted Jacobi polynomials, we obtain
\begin{equation}\label{eq:RJacobi-series}
\mathcal{P}_r^{\mu,\upsilon,\rho}(t)
=
\frac{\Gamma(1+r+\mu)}{r!\Gamma(1+r+\mu+\upsilon)}
\sum_{k=0}^{r}
\binom{r}{k}
\frac{(-1)^k \Gamma(1+r+k+\mu+\upsilon)}{\Gamma(1+k+\mu)}
(1-t)^{\rho k}.
\end{equation}

To transfer the differential and orthogonality properties of the shifted basis
to the backward endpoint setting, we introduce the following transformed
first-order derivative.

\begin{definition}\label{def:R-derivative}
The backward transformed derivative is defined by
\begin{equation}\label{eq:R-derivative-def}
\mathcal{D}_\rho f(t)
:=
\frac{d}{d\bigl(1-(1-t)^\rho\bigr)}f(t)
=
\frac{(1-t)^{1-\rho}}{\rho}\,f'(t),
\qquad t\in J.
\end{equation}
For any integer \(k\ge 1\), its higher-order powers are defined recursively by
\begin{equation}\label{eq:R-derivative-k}
\mathcal{D}_\rho^k f(t)
=
\underbrace{\mathcal{D}_\rho\circ \mathcal{D}_\rho\circ\cdots\circ \mathcal{D}_\rho}_{k\text{ times}}f(t).
\end{equation}
\end{definition}

The associated one-sided limits are given by
\begin{equation}\label{eq:R-derivative-limit-plus}
{}^{+}\mathcal{D}_\rho f(t)
:=
\lim_{\Delta t\to 0^+}
\frac{f(t+\Delta t)-f(t)}
{\bigl(1-(1-t-\Delta t)^\rho\bigr)-\bigl(1-(1-t)^\rho\bigr)},
\end{equation}
and
\begin{equation}\label{eq:R-derivative-limit-minus}
{}^{-}\mathcal{D}_\rho f(t)
:=
\lim_{\Delta t\to 0^-}
\frac{f(t+\Delta t)-f(t)}
{\bigl(1-(1-t-\Delta t)^\rho\bigr)-\bigl(1-(1-t)^\rho\bigr)}.
\end{equation}
Thus, \(\mathcal{D}_\rho f(t)\) exists whenever
\[
{}^{+}\mathcal{D}_\rho f(t)={}^{-}\mathcal{D}_\rho f(t),
\]
in which case
\[
\mathcal{D}_\rho f(t)={}^{+}\mathcal{D}_\rho f(t)={}^{-}\mathcal{D}_\rho f(t).
\]
In particular, for \(\rho=1\), the operator \(\mathcal{D}_\rho\) coincides with
the ordinary derivative.

We also define the two associated weights
\begin{equation}\label{eq:Rweight}
\varkappa^{\mu,\upsilon,\rho}(t)
:=
\rho (1-t)^{\rho(\mu+1)-1}\bigl(1-(1-t)^\rho\bigr)^\upsilon,
\end{equation}
and
\begin{equation}\label{eq:Rweight-hat}
\widehat{\varkappa}^{\mu,\upsilon,\rho}(t)
:=
(1-t)^{\rho\mu}\bigl(1-(1-t)^\rho\bigr)^\upsilon
=
\rho^{-1}(1-t)^{1-\rho}\varkappa^{\mu,\upsilon,\rho}(t).
\end{equation}

\begin{lemma}\label{lem:R-orth}
The fractional backward orthogonal functions
\(\{\mathcal{P}_r^{\mu,\upsilon,\rho}\}_{r=0}^{\infty}\) form an orthogonal
system with respect to the weight \(\varkappa^{\mu,\upsilon,\rho}\), namely,
\begin{equation}\label{eq:R-orth}
\int_0^1
\varkappa^{\mu,\upsilon,\rho}(t)
\mathcal{P}_r^{\mu,\upsilon,\rho}(t)
\mathcal{P}_s^{\mu,\upsilon,\rho}(t)\,dt
=
\widehat{\beta}_r^{\mu,\upsilon}\delta_{s,r},
\end{equation}
where
\begin{equation}\label{eq:2.5}
\widehat{\beta}_r^{\mu,\upsilon}
=
\frac{\Gamma(r+\mu+1)\Gamma(r+\upsilon+1)}
{r!(2r+\mu+\upsilon+1)\Gamma(r+\mu+\upsilon+1)}.
\end{equation}
\end{lemma}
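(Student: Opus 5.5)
The plan is a change of variables that reduces the integral in \eqref{eq:R-orth} to the classical Jacobi orthogonality relation on \([-1,1]\), followed by a check of the constant. By Definition~\ref{def:RJacobi}, \(\mathcal{P}_r^{\mu,\upsilon,\rho}(t)=\mathcal{P}_r^{\mu,\upsilon}(x)\) with
\[
x=x(t):=1-2(1-t)^\rho .
\]
So I would substitute \(x=x(t)\) in the integral. For \(0<\rho\le 1\), the map \(t\mapsto x(t)\) is a strictly increasing \(C^1\) bijection of \((0,1)\) onto \((-1,1)\), with \(x(0)=-1\) and \(x(1)=1\). Hence no orientation sign appears.

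Next I would express each factor of the weight in terms of \(x\):
\[
1-x=2(1-t)^\rho,\qquad 1+x=2\bigl(1-(1-t)^\rho\bigr),\qquad dx=2\rho(1-t)^{\rho-1}\,dt .
\]
Multiplying these gives
\[
(1-x)^\mu(1+x)^\upsilon\,dx
=2^{\mu+\upsilon+1}\rho(1-t)^{\rho\mu+\rho-1}\bigl(1-(1-t)^\rho\bigr)^\upsilon\,dt
=2^{\mu+\upsilon+1}\varkappa^{\mu,\upsilon,\rho}(t)\,dt,
\]
using the definition \eqref{eq:Rweight}. Therefore the left-hand side of \eqref{eq:R-orth} equals
\[
2^{-(\mu+\upsilon+1)}\int_{-1}^1(1-x)^\mu(1+x)^\upsilon\mathcal{P}_r^{\mu,\upsilon}(x)\mathcal{P}_s^{\mu,\upsilon}(x)\,dx .
\]

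The classical Jacobi orthogonality relation says this last integral is \(\gamma_r^{\mu,\upsilon}\delta_{r,s}\), where
\[
\gamma_r^{\mu,\upsilon}=\frac{2^{\mu+\upsilon+1}\Gamma(r+\mu+1)\Gamma(r+\upsilon+1)}{r!\,(2r+\mu+\upsilon+1)\Gamma(r+\mu+\upsilon+1)},
\]
as in \cite{shen2011spectral,szeg1939orthogonal}. The factor \(2^{\mu+\upsilon+1}\) cancels, which gives exactly \(\widehat{\beta}_r^{\mu,\upsilon}\) in \eqref{eq:2.5}.

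The only step needing some care is the justification of the substitution, since the integrand may be singular at both endpoints. The exponent \(\rho(\mu+1)-1\) can be below \(-1+\rho\), and \(\upsilon\) may be negative. I would handle this by performing the change of variables on \([\epsilon,1-\epsilon]\), where everything is smooth, and then letting \(\epsilon\to0\). Monotone or dominated convergence applies because the \(x\)-side integrand is integrable whenever \(\mu,\upsilon>-1\); the same reasoning shows \(\varkappa^{\mu,\upsilon,\rho}\in L^1(J)\).

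The main obstacle is otherwise just bookkeeping of the Jacobi normalization, in particular confirming that \eqref{eq:2.5} corresponds to the standard normalization of \(\mathcal{P}_r^{\mu,\upsilon}\) scaled by \(2^{-(\mu+\upsilon+1)}\). It would also be worth noting that orthogonality could be derived independently from the uniqueness statement of Lemma~\ref{lem:R-monic}, but the direct substitution is simpler.
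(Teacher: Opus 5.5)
Your proposal is correct. The weight identity $(1-x)^\mu(1+x)^\upsilon\,dx=2^{\mu+\upsilon+1}\varkappa^{\mu,\upsilon,\rho}(t)\,dt$ and the cancellation of $2^{\mu+\upsilon+1}$ against the classical Jacobi norm both check out.

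The paper states this lemma without proof. Your substitution is the same mechanism it uses in Lemma~\ref{lem:2.6R} and in \eqref{eq:2.26R}: there it goes through $z=1-(1-t)^\rho$ and the shifted Jacobi weight $(1-z)^\mu z^\upsilon$ on $[0,1]$, which is your map $x=1-2(1-t)^\rho$ composed with $x=2z-1$.
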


The following result establishes that the fractional backward orthogonal
functions arise as eigenfunctions of a singular Sturm--Liouville operator
associated with the backward endpoint mapping
\(t\mapsto 1-(1-t)^\rho\).

Define
\begin{equation}\label{eq:2.7}
\mathcal{L}_{\rho}^{\mu,\upsilon}f(t)
=
-\bigl(\widehat{\varkappa}^{\mu,\upsilon,\rho}(t)\bigr)^{-1}
\mathcal{D}_\rho
\left\{
(1-t)^{\rho(\mu+1)}
\bigl(1-(1-t)^\rho\bigr)^{\upsilon+1}
\mathcal{D}_\rho f(t)
\right\}.
\end{equation}

\begin{lemma}\label{lem:R-SL}
The sequence \(\{\mathcal{P}_r^{\mu,\upsilon,\rho}\}_{r=0}^\infty\) satisfies
the singular Sturm--Liouville eigenvalue problem
\begin{equation}\label{eq:R-SL}
\mathcal{L}_{\rho}^{\mu,\upsilon}\mathcal{P}_r^{\mu,\upsilon,\rho}(t)
=
\sigma_r^{\mu,\upsilon}\mathcal{P}_r^{\mu,\upsilon,\rho}(t),
\end{equation}
where
\[
\sigma_r^{\mu,\upsilon}=r(r+\mu+\upsilon+1).
\]
Equivalently, in terms of the ordinary derivative with respect to \(t\),
\begin{equation}\label{eq:R-SL-xform}
-\bigl(\varkappa^{\mu,\upsilon,\rho}(t)\bigr)^{-1}
\frac{d}{dt}
\left\{
\rho^{-1}(1-t)^{\rho\mu+1}
\bigl(1-(1-t)^\rho\bigr)^{\upsilon+1}
\frac{d}{dt}\mathcal{P}_r^{\mu,\upsilon,\rho}(t)
\right\}
=
\sigma_r^{\mu,\upsilon}\mathcal{P}_r^{\mu,\upsilon,\rho}(t).
\end{equation}
\end{lemma}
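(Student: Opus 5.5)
The plan is to reduce the statement to the classical Jacobi Sturm--Liouville equation by the change of variables that defines $\mathcal{P}_r^{\mu,\upsilon,\rho}$. Set $x=1-2(1-t)^\rho\in[-1,1]$. Then $1-x=2(1-t)^\rho$ and $1+x=2\bigl(1-(1-t)^\rho\bigr)$. Since $\frac{d}{d(1-(1-t)^\rho)}=2\frac{d}{dx}$, Definition~\ref{def:R-derivative} gives
\[
\mathcal{D}_\rho f(t)=2\,\frac{d}{dx}F(x),\qquad f(t)=F(x).
\]
In particular $\mathcal{D}_\rho\mathcal{P}_r^{\mu,\upsilon,\rho}(t)=2\,\partial_x\mathcal{P}_r^{\mu,\upsilon}(x)$. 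This step uses only the chain rule, $\frac{dx}{dt}=2\rho(1-t)^{\rho-1}$, which is valid for $t\in[0,1)$.

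Next I rewrite the operator \eqref{eq:2.7} in $x$. We have
\[
(1-t)^{\rho(\mu+1)}\bigl(1-(1-t)^\rho\bigr)^{\upsilon+1}=2^{-(\mu+\upsilon+2)}(1-x)^{\mu+1}(1+x)^{\upsilon+1},
\qquad
\widehat{\varkappa}^{\mu,\upsilon,\rho}(t)=2^{-(\mu+\upsilon)}(1-x)^{\mu}(1+x)^{\upsilon}.
\]
Applying $\mathcal{D}_\rho=2\partial_x$ twice produces a factor $4$. The powers of $2$ therefore cancel:
\[
2^{\mu+\upsilon}\cdot 4\cdot 2^{-(\mu+\upsilon+2)}=1.
\]
So $\mathcal{L}_\rho^{\mu,\upsilon}f(t)$ equals
\[
-(1-x)^{-\mu}(1+x)^{-\upsilon}\,\partial_x\!\left\{(1-x)^{\mu+1}(1+x)^{\upsilon+1}\partial_x F(x)\right\},
\]
which is exactly the classical Jacobi Sturm--Liouville operator. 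The classical identity (e.g.\ \cite{shen2011spectral}) gives eigenvalue $r(r+\mu+\upsilon+1)$ for $\mathcal{P}_r^{\mu,\upsilon}$, and substituting back $x=1-2(1-t)^\rho$ yields \eqref{eq:R-SL}.

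For the equivalent form \eqref{eq:R-SL-xform}, I expand $\mathcal{D}_\rho=\rho^{-1}(1-t)^{1-\rho}\frac{d}{dt}$ in both occurrences. The inner one turns the flux into
\[
\rho^{-1}(1-t)^{\rho\mu+1}\bigl(1-(1-t)^\rho\bigr)^{\upsilon+1}\frac{d}{dt}.
\]
The outer factor $\rho^{-1}(1-t)^{1-\rho}$ combines with $(\widehat{\varkappa}^{\mu,\upsilon,\rho})^{-1}$, using the identity in \eqref{eq:Rweight-hat}:
\[
(\widehat{\varkappa}^{\mu,\upsilon,\rho})^{-1}\rho^{-1}(1-t)^{1-\rho}=(\varkappa^{\mu,\upsilon,\rho})^{-1}.
\]
This gives \eqref{eq:R-SL-xform} directly.

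The only delicate point is bookkeeping: the normalization constants (powers of $2$ and $\rho$) must cancel exactly, and the computation must be restricted to $t\in[0,1)$, where the map is smooth. At $t=1$ the identity is understood by continuity, since both sides are $\rho$-polynomials, i.e.\ continuous functions of $(1-t)^\rho$.
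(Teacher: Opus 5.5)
Your proof is correct: the substitution $x=1-2(1-t)^\rho$ gives $\mathcal{D}_\rho=2\,\partial_x$, the powers of $2$ cancel via $2^{\mu+\upsilon}\cdot 4\cdot 2^{-(\mu+\upsilon+2)}=1$, and the identity $\bigl(\widehat{\varkappa}^{\mu,\upsilon,\rho}\bigr)^{-1}\rho^{-1}(1-t)^{1-\rho}=\bigl(\varkappa^{\mu,\upsilon,\rho}\bigr)^{-1}$ holds. The paper states this lemma without a written proof, but reducing to the classical Jacobi Sturm--Liouville equation is exactly the mechanism it relies on; it uses the same substitution, in the form $\eta=1-(1-t)^\rho$ with $\mathcal{D}_\rho=\partial_\eta$, in the proof of Lemma~\ref{lem:2.6R}. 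One small precision: the pointwise computation is really valid on the open interval $(0,1)$, because at $t=0$ the factors in $\bigl(1-(1-t)^\rho\bigr)$ degenerate (unless $\upsilon=0$) just as the factors in $(1-t)$ do at $t=1$, so both endpoints, not only $t=1$, are covered by your continuity argument.
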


\begin{lemma}\label{lem:R-deriv}
Let \(k\) be an integer satisfying \(0\le k\le r\). Then the transformed
derivatives of the fractional backward orthogonal functions satisfy
\begin{equation}\label{eq:R-deriv-relation}
\mathcal{D}_\rho^k\mathcal{P}_r^{\mu,\upsilon,\rho}(t)
=
\widehat{d}_{r,k}^{\mu,\upsilon}
\mathcal{P}_{r-k}^{\mu+k,\upsilon+k,\rho}(t),
\end{equation}
where
\begin{equation}\label{eq:R-dnk}
\widehat{d}_{r,k}^{\mu,\upsilon}
=
\frac{\Gamma(r+k+\mu+\upsilon+1)}
{\Gamma(r+\mu+\upsilon+1)}.
\end{equation}
Moreover, these derivatives are orthogonal with respect to the shifted weight
\(\varkappa^{\mu+k,\upsilon+k,\rho}\), that is,
\begin{equation}\label{eq:R-deriv-orth}
\int_0^1
\varkappa^{\mu+k,\upsilon+k,\rho}(t)
\mathcal{D}_\rho^k\mathcal{P}_r^{\mu,\upsilon,\rho}(t)
\mathcal{D}_\rho^k\mathcal{P}_s^{\mu,\upsilon,\rho}(t)\,dt
=
\widehat{h}_{r,k}^{\mu,\upsilon}\delta_{s,r},
\end{equation}
with
\begin{equation}\label{eq:R-hnk}
\widehat{h}_{r,k}^{\mu,\upsilon}
=
\frac{
\Gamma(r+\mu+1)\Gamma(r+\upsilon+1)\Gamma(r+k+\mu+\upsilon+1)
}{
(2r+\mu+\upsilon+1)(r-k)!\Gamma^2(r+\mu+\upsilon+1)
}.
\end{equation}
\end{lemma}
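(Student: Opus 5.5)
The plan is to reduce everything to the classical Jacobi derivative formula through the change of variable underlying Definition~\ref{def:RJacobi}. Put $s=1-(1-t)^\rho\in[0,1]$ and $x=1-2(1-t)^\rho=2s-1\in[-1,1]$. By Definition~\ref{def:R-derivative}, $\mathcal{D}_\rho=\frac{d}{ds}$, and since $x=2s-1$ this gives, for any smooth $F$,
\[
\mathcal{D}_\rho\bigl[F(1-2(1-t)^\rho)\bigr]=2F'(x)\big|_{x=1-2(1-t)^\rho}.
\]
Equivalently, one can use $\frac{dx}{dt}=2\rho(1-t)^{\rho-1}$ together with the factor $\rho^{-1}(1-t)^{1-\rho}$ in the definition of $\mathcal{D}_\rho$. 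The key point is that $\mathcal{D}_\rho$ acts on functions of $x$ as $2\,d/dx$, with no leftover singular factor.

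Next I would invoke the classical identity
\[
\frac{d}{dx}\mathcal{P}_r^{\mu,\upsilon}(x)=\tfrac12(r+\mu+\upsilon+1)\,\mathcal{P}_{r-1}^{\mu+1,\upsilon+1}(x).
\]
Combined with the previous step, it yields the case $k=1$:
\[
\mathcal{D}_\rho\mathcal{P}_r^{\mu,\upsilon,\rho}=(r+\mu+\upsilon+1)\,\mathcal{P}_{r-1}^{\mu+1,\upsilon+1,\rho}.
\]
An induction on $k$ then applies this identity with parameters $(\mu+j,\upsilon+j)$ and degree $r-j$ for $j=0,\dots,k-1$. The constant becomes
\[
\prod_{j=0}^{k-1}(r+\mu+\upsilon+1+j)=\frac{\Gamma(r+k+\mu+\upsilon+1)}{\Gamma(r+\mu+\upsilon+1)}=\widehat d_{r,k}^{\mu,\upsilon},
\]
because each step raises both parameters by one and lowers the degree by one, so $(r-j)+(\mu+j)+(\upsilon+j)+1=r+\mu+\upsilon+1+j$. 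The case $k=0$ is trivial. This proves \eqref{eq:R-deriv-relation}.

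For the orthogonality \eqref{eq:R-deriv-orth}, I would substitute \eqref{eq:R-deriv-relation} into the integral and apply Lemma~\ref{lem:R-orth} with parameters $(\mu+k,\upsilon+k)$, which are still $>-1$, and degrees $r-k$, $s-k$. If $s<k$ (or $r<k$), then $\mathcal{D}_\rho^k\mathcal{P}_s^{\mu,\upsilon,\rho}=0$ because it is a $\rho$-polynomial of degree $s<k$ differentiated $k$ times, so the relation holds trivially. Otherwise the integral equals $(\widehat d_{r,k}^{\mu,\upsilon})^2\,\widehat\beta_{r-k}^{\mu+k,\upsilon+k}\,\delta_{r,s}$. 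From \eqref{eq:2.5},
\[
\widehat\beta_{r-k}^{\mu+k,\upsilon+k}=\frac{\Gamma(r+\mu+1)\Gamma(r+\upsilon+1)}{(r-k)!\,(2r+\mu+\upsilon+1)\,\Gamma(r+k+\mu+\upsilon+1)}.
\]
Multiplying by
\[
(\widehat d_{r,k}^{\mu,\upsilon})^2=\frac{\Gamma^2(r+k+\mu+\upsilon+1)}{\Gamma^2(r+\mu+\upsilon+1)}
\]
gives exactly $\widehat h_{r,k}^{\mu,\upsilon}$ in \eqref{eq:R-hnk}.

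The only delicate point is keeping the normalizations of $\mathcal{D}_\rho$, of $x$, and of the weight consistent. The factor $2$ from $x=2s-1$ must cancel the $\tfrac12$ in the classical derivative formula. I would also check that the weight $\varkappa^{\mu,\upsilon,\rho}\,dt$ pulls back to $(1-s)^\mu s^\upsilon\,ds$, since
\[
ds=\rho(1-t)^{\rho-1}\,dt,
\]
so that the constant in Lemma~\ref{lem:R-orth} is the one being used. Everything else is bookkeeping with Gamma functions.
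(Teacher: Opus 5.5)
Your proof is correct, but it reaches the first-order identity by a different route than the paper.

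\textbf{Your route.} You pull back the classical formula $\frac{d}{dx}\mathcal{P}_r^{\mu,\upsilon}=\tfrac12(r+\mu+\upsilon+1)\mathcal{P}_{r-1}^{\mu+1,\upsilon+1}$ through the affine relation $x=2s-1$, $s=1-(1-t)^\rho$. Since $\mathcal{D}_\rho=2\,d/dx$, the constant $r+\mu+\upsilon+1$ appears immediately.

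\textbf{The paper's route.} It stays inside the $\rho$-polynomial framework:
\begin{enumerate}
\item It integrates by parts against the Sturm--Liouville operator (Lemma~\ref{lem:R-SL}) to show that $\{\mathcal{D}_\rho\mathcal{P}_r^{\mu,\upsilon,\rho}\}$ is orthogonal with respect to $\varkappa^{\mu+1,\upsilon+1,\rho}$.
\item It invokes uniqueness of the orthogonal system (Lemma~\ref{lem:R-monic}) to conclude that $\mathcal{D}_\rho\mathcal{P}_r^{\mu,\upsilon,\rho}$ is a multiple of $\mathcal{P}_{r-1}^{\mu+1,\upsilon+1,\rho}$.
\item It fixes the multiple by comparing leading coefficients in $z=(1-t)^\rho$ from the series \eqref{eq:RJacobi-series}, using $\mathcal{D}_\rho=-d/dz$.
\end{enumerate}
After that the two proofs coincide. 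Both iterate to get $\widehat d_{r,k}^{\mu,\upsilon}$, and both evaluate $(\widehat d_{r,k}^{\mu,\upsilon})^2\widehat\beta_{r-k}^{\mu+k,\upsilon+k}$ via Lemma~\ref{lem:R-orth}.

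\textbf{What each buys.} Your argument is shorter and makes explicit that the whole lemma is the classical one transported by $t\mapsto 1-(1-t)^\rho$. Your check that $\varkappa^{\mu,\upsilon,\rho}\,dt=(1-s)^\mu s^\upsilon\,ds$ is exactly what makes the normalization in Lemma~\ref{lem:R-orth} match. The only external input you need is the classical derivative formula.

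The paper avoids quoting that formula and derives the relation from structural properties instead. As a by-product it obtains the identity $\|\mathcal{D}_\rho\mathcal{P}_r^{\mu,\upsilon,\rho}\|^2_{0,\varkappa^{\mu+1,\upsilon+1,\rho}}=\sigma_r^{\mu,\upsilon}\widehat\beta_r^{\mu,\upsilon}$, which is reused in the proof of Proposition~\ref{prop:2.3R}. (Your result recovers this too, since $\widehat h_{r,1}^{\mu,\upsilon}=\sigma_r^{\mu,\upsilon}\widehat\beta_r^{\mu,\upsilon}$.) The cost is more machinery: vanishing boundary terms in the integration by parts, uniqueness up to scaling, and a leading-coefficient computation.

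You also treat the case $s<k$ explicitly, which the paper leaves implicit.
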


\begin{proof}
We begin with the case \(k=1\). Using integration by parts, together with
Lemma~\ref{lem:R-SL} and the orthogonality property of
\(\{\mathcal{P}_r^{\mu,\upsilon,\rho}\}_{r=0}^{\infty}\), we obtain
\begin{equation}\label{eq:R-proof-1}
\begin{aligned}
\int_{0}^{1}
\varkappa^{\mu+1,\upsilon+1,\rho}(t)
\mathcal{D}_{\rho}\mathcal{P}_r^{\mu,\upsilon,\rho}(t)
\mathcal{D}_{\rho}\mathcal{P}_s^{\mu,\upsilon,\rho}(t)\,dt
&=
\bigl(
\mathcal{P}_r^{\mu,\upsilon,\rho},
\mathcal{L}_{\rho}^{\mu,\upsilon}
\mathcal{P}_s^{\mu,\upsilon,\rho}
\bigr)_{\varkappa^{\mu,\upsilon,\rho}} \\
&=
\sigma_r^{\mu,\upsilon}\widehat{\beta}_r^{\mu,\upsilon}\delta_{s,r}.
\end{aligned}
\end{equation}
Hence, the family
\(\{\mathcal{D}_{\rho}\mathcal{P}_r^{\mu,\upsilon,\rho}\}_{r=1}^{\infty}\) is
orthogonal with respect to the weight
\(\varkappa^{\mu+1,\upsilon+1,\rho}\).

Furthermore, since
\(\mathcal{D}_{\rho}\mathcal{P}_r^{\mu,\upsilon,\rho}\in P_{r-1}^{\rho}(J)\),
Lemma~\ref{lem:R-monic} implies that it must be proportional to
\(\mathcal{P}_{r-1}^{\mu+1,\upsilon+1,\rho}\). Thus,
\begin{equation}\label{eq:R-proof-2}
\mathcal{D}_{\rho}\mathcal{P}_r^{\mu,\upsilon,\rho}(t)
=
\widehat{d}_{r,1}^{\mu,\upsilon}
\mathcal{P}_{r-1}^{\mu+1,\upsilon+1,\rho}(t),
\end{equation}
where \(\widehat{d}_{r,1}^{\mu,\upsilon}\) is independent of \(t\).

From \eqref{eq:RJacobi-series}, the leading coefficient of
\(\mathcal{P}_r^{\mu,\upsilon,\rho}\), regarded as a polynomial in
\(z:=(1-t)^\rho\), is
\[
k_r^{\mu,\upsilon}
=
(-1)^r\frac{\Gamma(2r+\mu+\upsilon+1)}
{r!\Gamma(r+\mu+\upsilon+1)}.
\]
Since
\[
\mathcal{D}_\rho
=
\frac{d}{d(1-(1-t)^\rho)}
=
-\frac{d}{dz},
\]
comparison of the leading coefficients in \eqref{eq:R-proof-2} yields
\begin{equation}\label{eq:R-proof-3}
\widehat{d}_{r,1}^{\mu,\upsilon}
=
\frac{-r\,k_r^{\mu,\upsilon}}{k_{r-1}^{\mu+1,\upsilon+1}}
=
r+\mu+\upsilon+1.
\end{equation}
Consequently,
\begin{equation}\label{eq:R-proof-4}
\mathcal{D}_{\rho}\mathcal{P}_r^{\mu,\upsilon,\rho}(t)
=
(r+\mu+\upsilon+1)
\mathcal{P}_{r-1}^{\mu+1,\upsilon+1,\rho}(t).
\end{equation}

Applying \eqref{eq:R-proof-4} recursively gives
\begin{equation}\label{eq:R-proof-5}
\mathcal{D}_{\rho}^{k}\mathcal{P}_r^{\mu,\upsilon,\rho}(t)
=
\widehat{d}_{r,k}^{\mu,\upsilon}
\mathcal{P}_{r-k}^{\mu+k,\upsilon+k,\rho}(t),
\end{equation}
where
\begin{equation}\label{eq:R-proof-6}
\widehat{d}_{r,k}^{\mu,\upsilon}
=
\frac{\Gamma(r+k+\mu+\upsilon+1)}
{\Gamma(r+\mu+\upsilon+1)}.
\end{equation}
This proves \eqref{eq:R-deriv-relation}.

Finally, applying Lemma~\ref{lem:R-orth} with the shifted parameters
\((\mu+k,\upsilon+k)\) gives
\begin{equation}\label{eq:R-proof-7}
\int_0^1
\varkappa^{\mu+k,\upsilon+k,\rho}(t)
\mathcal{D}_{\rho}^{k}\mathcal{P}_r^{\mu,\upsilon,\rho}(t)
\mathcal{D}_{\rho}^{k}\mathcal{P}_s^{\mu,\upsilon,\rho}(t)\,dt
=
\widehat{h}_{r,k}^{\mu,\upsilon}\delta_{s,r},
\end{equation}
where
\begin{equation}\label{eq:R-proof-8}
\begin{aligned}
\widehat{h}_{r,k}^{\mu,\upsilon}
&=
\bigl(\widehat{d}_{r,k}^{\mu,\upsilon}\bigr)^2
\widehat{\beta}_{r-k}^{\mu+k,\upsilon+k} \\
&=
\frac{
\Gamma(r+\mu+1)\Gamma(r+\upsilon+1)\Gamma(r+k+\mu+\upsilon+1)
}{
(2r+\mu+\upsilon+1)(r-k)!\Gamma^2(r+\mu+\upsilon+1)
},
\qquad r\ge k.
\end{aligned}
\end{equation}
The proof is complete.
\end{proof}

\begin{lemma}
\label{lem:right-gronwall}
Let \(u\in C(J)\) be nonnegative on \(J:=[0,1]\), and let
\(\varphi\in C(J)\) be nonnegative. Assume that \(0<\theta<1\), and let
\(K\in C(\mathcal{D}_R)\), where
\[
\mathcal{D}_R:=\{(t,\varrho):0\le t\le \varrho\le 1\}.
\]
Define
\[
(\mathcal{K}_R u)(t)
:=
\int_t^1
\frac{K(t,\varrho)}{(\varrho-t)^\theta}u(\varrho)\,d\varrho.
\]
If
\begin{equation}\label{eq:right-gronwall-assumption}
u(t)
\le
\varphi(t)+(\mathcal{K}_R u)(t),
\qquad 0\le t\le 1,
\end{equation}
then there exists a positive constant \(C\), independent of \(u\) and \(t\),
such that
\begin{equation}\label{eq:right-gronwall-estimate}
u(t)
\le
\Phi(t)
E_{1-\theta}
\!\left(
C\Gamma(1-\theta)(1-t)^{1-\theta}
\right),
\qquad 0\le t\le 1,
\end{equation}
where
\[
\Phi(t):=\max_{t\le s\le 1}\varphi(s),
\]
and
\[
E_{\sigma}(z)
=
\sum_{n=0}^{\infty}
\frac{z^n}{\Gamma(\sigma n+1)},
\qquad \sigma>0,
\]
is the Mittag--Leffler function.
\end{lemma}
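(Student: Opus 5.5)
Writing \(C:=\max_{\mathcal{D}_R}|K|\), the approach is to reduce the backward (terminal-point) Gronwall inequality to the classical forward weakly singular Gronwall inequality and then iterate the resulting Abel-type operator, identifying the Neumann series with a Mittag--Leffler function.

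\emph{Reduction to a forward inequality.} Since \(u\ge 0\), the hypothesis \eqref{eq:right-gronwall-assumption} gives
\[
u(t)\le \varphi(t)+C\int_t^1(\varrho-t)^{-\theta}u(\varrho)\,d\varrho .
\]
Introduce the reflection \(s=1-t\), \(v(s):=u(1-s)\) and \(\psi(s):=\varphi(1-s)\). Substituting \(\varrho=1-\tau\) turns the inequality into
\[
v(s)\le \psi(s)+C\int_0^s(s-\tau)^{-\theta}v(\tau)\,d\tau,
\]
which is a forward Abel--Volterra inequality. We then replace \(\psi\) by its running maximum \(\Psi(s):=\max_{0\le\tau\le s}\psi(\tau)=\Phi(1-s)\). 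This function is nondecreasing in \(s\), and \(\psi\le\Psi\).

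\emph{Iteration.} Define the operator \((Bw)(s):=C\int_0^s(s-\tau)^{-\theta}w(\tau)\,d\tau=C\,\Gamma(1-\theta)\,(I^{1-\theta}w)(s)\), where \(I^{\alpha}\) is the Riemann--Liouville integral. This operator is positive, so the inequality can be iterated to give
\[
v\le\sum_{n=0}^{N-1}B^n\Psi+B^Nv .
\]
By the semigroup property \(I^{a}I^{b}=I^{a+b}\),
\[
B^n w=(C\Gamma(1-\theta))^n I^{n(1-\theta)}w .
\]
Two facts now close the argument.
\begin{itemize}
\item The remainder vanishes: \(|B^Nv(s)|\le (C\Gamma(1-\theta))^N\|v\|_\infty s^{N(1-\theta)}/\Gamma(N(1-\theta)+1)\to 0\) as \(N\to\infty\), because \(\Gamma\) grows superexponentially.
\item Since \(\Psi\) is nondecreasing, for each fixed \(s\) and all \(\tau\le s\) we have \(\Psi(\tau)\le\Psi(s)\). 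Hence \(I^{n(1-\theta)}\Psi(s)\le\Psi(s)\,s^{n(1-\theta)}/\Gamma(n(1-\theta)+1)\).
\end{itemize}
Summing the series gives
\[
v(s)\le\Psi(s)\,E_{1-\theta}\bigl(C\Gamma(1-\theta)s^{1-\theta}\bigr).
\]
Reverting to \(t=1-s\) yields \eqref{eq:right-gronwall-estimate}, with \(C=\max|K|\), which is independent of \(u\) and \(t\).

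\emph{Expected obstacle.} The only delicate points are the exact bookkeeping of the constants in the \(n\)-fold composition, namely the Beta-function identity \(\int_0^s(s-\tau)^{a-1}\tau^{b-1}d\tau=B(a,b)s^{a+b-1}\), so that the argument of \(E_{1-\theta}\) comes out exactly as \(C\Gamma(1-\theta)(1-t)^{1-\theta}\), and the monotonicity step that allows \(\Phi\) to be pulled out of the iterated integrals. This monotonicity step is precisely why the statement uses \(\Phi(t)=\max_{t\le s\le1}\varphi(s)\) rather than \(\varphi(t)\).
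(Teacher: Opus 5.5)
The paper states this lemma without proof; it is only invoked later, in the proof of Theorem~\ref{thm:3.1R}. There is therefore no argument of the author's to compare yours with. Your proposal fills that gap, and it is correct and complete.

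Each step checks out:
\begin{enumerate}
\item The substitution $\varrho=1-\tau$, $s=1-t$ turns the terminal-point inequality into the forward Abel--Volterra inequality, with $\Psi(s)=\Phi(1-s)$ nondecreasing and $\psi\le\Psi$.
\item The operator $B=C\Gamma(1-\theta)I^{1-\theta}$ is order-preserving, which justifies the iteration $v\le\sum_{n<N}B^n\Psi+B^Nv$.
\item The semigroup property gives $B^n=(C\Gamma(1-\theta))^nI^{n(1-\theta)}$.
\item The remainder bound $(C\Gamma(1-\theta))^N\|v\|_\infty s^{N(1-\theta)}/\Gamma(N(1-\theta)+1)\to0$ is right.
\item Monotonicity of $\Psi$ lets you pull $\Psi(s)$ out of every $I^{n(1-\theta)}\Psi(s)$. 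This produces exactly the series $E_{1-\theta}\bigl(C\Gamma(1-\theta)s^{1-\theta}\bigr)$.
\end{enumerate}
This is the standard Henry-type iteration proof of the weakly singular Gronwall inequality, reflected to the terminal endpoint. It gives the explicit constant $C=\max_{\mathcal{D}_R}|K|$. One trivial edge case: if $K\equiv0$ that maximum is $0$. Any $C>0$ then works, since $E_{1-\theta}\ge1$ on $[0,\infty)$ and $u\le\varphi\le\Phi$.

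It is worth seeing what your form of the lemma delivers where the paper uses it. Applied to \eqref{eq:3.13R-a}, with $\Phi(t)\le\|R_1+R_2+R_3\|_\infty$, it gives \eqref{eq:3.14R} directly with $c=E_{1-\theta}\bigl(K_0\Gamma(1-\theta)\bigr)$.

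It does not give the exponential-kernel bound written in \eqref{eq:3.13R}, and that bound is in fact false for Abel kernels. Take the equality case $|R_1+R_2+R_3|\equiv1$, whose solution is $E_{1-\theta}\bigl(K_0\Gamma(1-\theta)(1-t)^{1-\theta}\bigr)$. The right-hand side of \eqref{eq:3.13R} evaluates to $\exp\bigl(z/(1-\theta)\bigr)$ with $z=K_0(1-t)^{1-\theta}$. Already for $\theta=\tfrac12$ the $z^2$ coefficients are $\pi$ versus $2$, so $E_{1/2}(\sqrt{\pi}z)>e^{2z}$ for all $z>0$. The Mittag--Leffler form you proved is therefore the correct tool at that step.
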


\section{Orthogonal projection}
\label{sec3}

This section develops the orthogonal projection framework associated with the
fractional backward orthogonal functions introduced in Section~\ref{sec2}. The
main purpose is to establish weighted projection estimates in Sobolev spaces
generated by the backward transformed derivative \(\mathcal{D}_\rho\). These
estimates provide the approximation-theoretic foundation for the fractional
backward spectral approximation of functions with terminal-endpoint weak
singularities.

Let
\[
\Pi_{N,\varkappa^{\mu,\upsilon,\rho}}:
L^{2}_{\varkappa^{\mu,\upsilon,\rho}}(J)\to P^{\rho}_{N}(J)
\]
denote the \(L^{2}_{\varkappa^{\mu,\upsilon,\rho}}(J)\)-orthogonal projection
onto \(P^{\rho}_{N}(J)\). For each
\(f\in L^{2}_{\varkappa^{\mu,\upsilon,\rho}}(J)\), the projected function
\(\Pi_{N,\varkappa^{\mu,\upsilon,\rho}}f\in P^{\rho}_{N}(J)\) is characterized by
\begin{equation}\label{eq:2.13R-a}
\bigl(f-\Pi_{N,\varkappa^{\mu,\upsilon,\rho}}f,f_N\bigr)_{\varkappa^{\mu,\upsilon,\rho}}=0,
\qquad \forall f_N\in P^{\rho}_{N}(J).
\end{equation}
Equivalently, it admits the expansion
\begin{equation}\label{eq:2.13R}
\Pi_{N,\varkappa^{\mu,\upsilon,\rho}}f(t)
=
\sum_{r=0}^{N}\hat f_r^{\mu,\upsilon}
\mathcal{P}_r^{\mu,\upsilon,\rho}(t),
\end{equation}
where
\begin{equation}\label{eq:2.13R-b}
\hat f_r^{\mu,\upsilon}
=
\frac{\bigl(f,\mathcal{P}_r^{\mu,\upsilon,\rho}\bigr)_{\varkappa^{\mu,\upsilon,\rho}}}
{\|\mathcal{P}_r^{\mu,\upsilon,\rho}\|^2_{0,\varkappa^{\mu,\upsilon,\rho}}}.
\end{equation}
The best-approximation property of the orthogonal projection gives
\begin{equation}\label{eq:2.13R-c}
\|f-\Pi_{N,\varkappa^{\mu,\upsilon,\rho}}f\|_{0,\varkappa^{\mu,\upsilon,\rho}}
=
\inf_{f_N\in P_N^\rho(J)}
\|f-f_N\|_{0,\varkappa^{\mu,\upsilon,\rho}}.
\end{equation}

To estimate the projection error, we introduce the Jacobi-weighted Sobolev
space associated with the backward endpoint transformation:
\begin{equation}\label{eq:2.13R-d}
B_{\mu,\upsilon}^{s,\rho}(J)
:=
\bigl\{
f:\ \mathcal{D}_\rho^k f\in L^2_{\varkappa^{\mu+k,\upsilon+k,\rho}}(J),
\ 0\le k\le s
\bigr\},
\qquad s\in\mathbb{N}.
\end{equation}
This space is equipped with the inner product, norm, and seminorm
\begin{align}
(g,f)_{B_{\mu,\upsilon}^{s,\rho}}
&=
\sum_{k=0}^{s}
\bigl(\mathcal{D}_\rho^k g,\mathcal{D}_\rho^k f\bigr)_{\varkappa^{\mu+k,\upsilon+k,\rho}},
\label{eq:2.13R-e}\\
\|f\|_{B_{\mu,\upsilon}^{s,\rho}}
&=
(f,f)_{B_{\mu,\upsilon}^{s,\rho}}^{1/2},
\label{eq:2.13R-f}\\
|f|_{B_{\mu,\upsilon}^{s,\rho}}
&=
\|\mathcal{D}_\rho^s f\|_{0,\varkappa^{\mu+s,\upsilon+s,\rho}}.
\label{eq:2.13R-g}
\end{align}

When \(\rho=1\), the above space reduces to the classical shifted
Jacobi-weighted Sobolev space
\begin{equation}\label{eq:2.13R-h}
B_{\mu,\upsilon}^{s,1}(J)
:=
\bigl\{
f:\ \partial_t^k f\in L^2_{\varkappa^{\mu+k,\upsilon+k,1}}(J),
\ 0\le k\le s
\bigr\},
\qquad s\in\mathbb{N}.
\end{equation}
The following lemma connects the fractional backward Sobolev scale with the
corresponding classical shifted Jacobi scale.

\begin{lemma}\label{lem:2.6R}
A function \(f\) belongs to \(B_{\mu,\upsilon}^{s,\rho}(J)\) if and only if
\[
f\bigl(1-(1-t)^{1/\rho}\bigr)\in B_{\mu,\upsilon}^{s,1}(J).
\]
\end{lemma}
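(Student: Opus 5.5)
Set $x=1-(1-t)^\rho$, so that $t=1-(1-x)^{1/\rho}$. The map $\phi:t\mapsto x$ is an increasing $C^1$ bijection of $(0,1)$ onto itself with $dx/dt=\rho(1-t)^{\rho-1}>0$. Define $F(x):=f(1-(1-x)^{1/\rho})$, so that $f(t)=F(x(t))$. The plan is to show that $\phi$ carries the derivative $\mathcal{D}_\rho$, the weights $\varkappa^{\mu+k,\upsilon+k,\rho}$, and hence the whole family of norms defining $B^{s,\rho}_{\mu,\upsilon}(J)$, exactly onto their $\rho=1$ counterparts. This gives an isometric identification of the two spaces.

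\textbf{Step 1 (derivatives).} By Definition~\ref{def:R-derivative} and the chain rule,
\[
\mathcal{D}_\rho f(t)=\frac{(1-t)^{1-\rho}}{\rho}f'(t)=\frac{(1-t)^{1-\rho}}{\rho}F'(x)\,\rho(1-t)^{\rho-1}=F'(x(t)).
\]
So $\mathcal{D}_\rho$ is just $d/dx$ in the new variable; indeed it was defined as $d/d(1-(1-t)^\rho)$. Inducting on $k$ gives $\mathcal{D}_\rho^k f(t)=F^{(k)}(x(t))$ for $0\le k\le s$. For nonsmooth $f$ these derivatives are weak derivatives. The identity still holds because $\phi$ is a $C^1$ diffeomorphism of the open interval, so distributional derivatives transform by the same chain rule; the one-sided limit definitions in the paper are consistent with this as well.

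\textbf{Step 2 (weights).} From \eqref{eq:Rweight}, $\varkappa^{\alpha,\beta,\rho}(t)\,dt=\rho(1-t)^{\rho\alpha+\rho-1}(1-(1-t)^\rho)^\beta\,dt$. Using $(1-t)^\rho=1-x$ and $dx=\rho(1-t)^{\rho-1}dt$, this becomes
\[
\varkappa^{\alpha,\beta,\rho}(t)\,dt=(1-x)^{\alpha}x^{\beta}\,dx=\varkappa^{\alpha,\beta,1}(x)\,dx.
\]
Take $\alpha=\mu+k$ and $\beta=\upsilon+k$. Then for every $0\le k\le s$,
\[
\int_0^1|\mathcal{D}_\rho^k f(t)|^2\varkappa^{\mu+k,\upsilon+k,\rho}(t)\,dt=\int_0^1|F^{(k)}(x)|^2\varkappa^{\mu+k,\upsilon+k,1}(x)\,dx .
\]
Each term is finite on one side exactly when it is finite on the other. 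Summing over $k$ shows $\|f\|_{B^{s,\rho}_{\mu,\upsilon}}=\|F\|_{B^{s,1}_{\mu,\upsilon}}$, and both directions of the equivalence follow. The same argument run backwards, starting from $F$ and setting $f=F\circ\phi$, handles the converse.

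\textbf{Main obstacle.} The only delicate point is justifying Step 1 at the regularity level of the spaces. There, $\mathcal{D}_\rho^kf$ is understood weakly, and $\phi'$ either degenerates or blows up at $t=1$, depending on whether $\rho<1$. I would handle this by testing against $\psi\in C_c^\infty(0,1)$. On the compact support of $\psi$, $\phi$ is a smooth diffeomorphism with bounded derivatives, so the change of variables in $\int f\,(\text{adjoint of }\mathcal{D}_\rho)\psi$ is legitimate. The endpoint behaviour never enters, because the weighted $L^2$ membership is verified through the exact integral identity of Step 2 on all of $(0,1)$ via monotone convergence.
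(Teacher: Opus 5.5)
Your proposal is correct and follows essentially the same route as the paper: the substitution $x=1-(1-t)^\rho$ turns $\mathcal{D}_\rho^k f$ into $\partial_x^k F$ and $\varkappa^{\mu+k,\upsilon+k,\rho}(t)\,dt$ into $\varkappa^{\mu+k,\upsilon+k,1}(x)\,dx$, so each weighted seminorm is preserved and summing over $k$ gives the equivalence. Your extra justification via test functions in $C_c^\infty(0,1)$, which handles weak derivatives, adds rigor that the paper leaves implicit but does not change the argument.
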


\begin{proof}
Set
\[
\eta=1-(1-t)^\rho,
\qquad
t=1-(1-\eta)^{1/\rho}.
\]
By the definition of \(\mathcal{D}_\rho\), we have
\[
\mathcal{D}_\rho^k f(t)
=
\partial_\eta^k\Bigl\{f\bigl(1-(1-\eta)^{1/\rho}\bigr)\Bigr\}.
\]
Therefore, for \(0\le k\le s\),
\begin{align}
\|\mathcal{D}_\rho^k f\|^2_{0,\varkappa^{\mu+k,\upsilon+k,\rho}}
&=
\int_0^1
\varkappa^{\mu+k,\upsilon+k,\rho}(t)
\bigl(\mathcal{D}_\rho^k f(t)\bigr)^2\,dt
\nonumber\\
&=
\int_0^1
\rho (1-t)^{\rho(\mu+k+1)-1}
\bigl(1-(1-t)^\rho\bigr)^{\upsilon+k}
\bigl(\mathcal{D}_\rho^k f(t)\bigr)^2\,dt
\nonumber\\
&=
\int_0^1
(1-\eta)^{\mu+k}\eta^{\upsilon+k}
\Bigl(\partial_\eta^k
\bigl\{f(1-(1-\eta)^{1/\rho})\bigr\}\Bigr)^2\,d\eta
\nonumber\\
&=
\Bigl\|
\partial_t^k\bigl\{f(1-(1-t)^{1/\rho})\bigr\}
\Bigr\|^2_{0,\varkappa^{\mu+k,\upsilon+k,1}}.
\label{eq:2.14R}
\end{align}
Summing over \(k=0,\ldots,s\) proves the equivalence of the two weighted
Sobolev regularities.
\end{proof}

\begin{proposition}\label{prop:2.1R}
Let \(0\le \jmath\le s\le N+1\). If
\[
f\bigl(1-(1-t)^{1/\rho}\bigr)\in B_{\mu,\upsilon}^{s,1}(J),
\]
then the orthogonal projector \(\Pi_{N,\varkappa^{\mu,\upsilon,\rho}}\)
satisfies
\begin{equation}\label{eq:2.15R}
\|\mathcal{D}_\rho^\jmath(f-\Pi_{N,\varkappa^{\mu,\upsilon,\rho}}f)\|_{0,\varkappa^{\mu+\jmath,\upsilon+\jmath,\rho}}
\le
c\sqrt{\frac{(N-s+1)!}{(N-\jmath+1)!}}\,N^{(\jmath-s)/2}
\Bigl\|
\partial_t^s\bigl\{f(1-(1-t)^{1/\rho})\bigr\}
\Bigr\|_{0,\varkappa^{\mu+s,\upsilon+s,1}}.
\end{equation}
For fixed \(s\), this estimate simplifies to
\begin{equation}\label{eq:2.16R}
\|\mathcal{D}_\rho^\jmath(f-\Pi_{N,\varkappa^{\mu,\upsilon,\rho}}f)\|_{0,\varkappa^{\mu+\jmath,\upsilon+\jmath,\rho}}
\le
cN^{\,\jmath-s}
\Bigl\|
\partial_t^s\bigl\{f(1-(1-t)^{1/\rho})\bigr\}
\Bigr\|_{0,\varkappa^{\mu+s,\upsilon+s,1}},
\end{equation}
where \(c\) is independent of \(N\) and \(f\). In particular,
\begin{equation}\label{eq:2.17R}
\|f-\Pi_{N,\varkappa^{\mu,\upsilon,\rho}}f\|_{0,\varkappa^{\mu,\upsilon,\rho}}
\le
cN^{-s}
\Bigl\|
\partial_t^s\bigl\{f(1-(1-t)^{1/\rho})\bigr\}
\Bigr\|_{0,\varkappa^{\mu+s,\upsilon+s,1}},
\end{equation}
and
\begin{equation}\label{eq:2.18R}
\|\partial_t(f-\Pi_{N,\varkappa^{\mu,\upsilon,\rho}}f)\|_{0,\tilde\varkappa^{\mu,\upsilon,\rho}}
\le
cN^{1-s}
\Bigl\|
\partial_t^s\bigl\{f(1-(1-t)^{1/\rho})\bigr\}
\Bigr\|_{0,\varkappa^{\mu+s,\upsilon+s,1}},
\end{equation}
where
\begin{equation}\label{eq:2.18R-a}
\tilde\varkappa^{\mu,\upsilon,\rho}(t)
=
\rho^{-1}(1-t)^{\rho\mu+1}
\bigl(1-(1-t)^\rho\bigr)^{\upsilon+1}.
\end{equation}
\end{proposition}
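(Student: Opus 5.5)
We reduce the estimate to the classical shifted Jacobi projection estimate through the change of variables \(\eta=1-(1-t)^\rho\), exactly as in the proof of Lemma~\ref{lem:2.6R}. Set \(F(\eta):=f(1-(1-\eta)^{1/\rho})\). By Definition~\ref{def:RJacobi}, \(\mathcal{P}_r^{\mu,\upsilon,\rho}(t)=\mathcal{P}_r^{\mu,\upsilon}(2\eta-1)\). Under this substitution, \(\varkappa^{\mu,\upsilon,\rho}(t)\,dt=(1-\eta)^\mu\eta^\upsilon\,d\eta=\varkappa^{\mu,\upsilon,1}(\eta)\,d\eta\).

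\emph{Step 1: the projections correspond.} The coefficients \eqref{eq:2.13R-b} of \(f\) coincide with the shifted Jacobi coefficients of \(F\). Hence
\[
(\Pi_{N,\varkappa^{\mu,\upsilon,\rho}}f)(t)=(\Pi_{N,\varkappa^{\mu,\upsilon,1}}F)(\eta).
\]

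\emph{Step 2: the errors correspond.} Since \(\mathcal{D}_\rho^\jmath=\partial_\eta^\jmath\), the identity \eqref{eq:2.14R} applied to \(f-\Pi_{N,\varkappa^{\mu,\upsilon,\rho}}f\) gives
\[
\|\mathcal{D}_\rho^\jmath(f-\Pi_{N,\varkappa^{\mu,\upsilon,\rho}}f)\|_{0,\varkappa^{\mu+\jmath,\upsilon+\jmath,\rho}}
=\|\partial_\eta^\jmath(F-\Pi_{N,\varkappa^{\mu,\upsilon,1}}F)\|_{0,\varkappa^{\mu+\jmath,\upsilon+\jmath,1}}.
\]

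\emph{Step 3: invoke the classical estimate.} The right-hand side is bounded by the classical Jacobi projection estimate (Shen--Tang--Wang, Theorem~3.35), transplanted to \([0,1]\). That estimate yields precisely the factor \(c\sqrt{(N-s+1)!/(N-\jmath+1)!}\,N^{(\jmath-s)/2}\) times \(\|\partial_\eta^s F\|_{0,\varkappa^{\mu+s,\upsilon+s,1}}\), which is \eqref{eq:2.15R}.

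If one prefers a self-contained argument, it goes as follows. Expand \(F\) in the Jacobi basis and use Lemma~\ref{lem:R-deriv}, i.e.\ the orthogonality of derivatives with norms \(\widehat h_{r,k}^{\mu,\upsilon}\). This gives
\[
\|\partial^\jmath(F-\Pi_NF)\|^2=\sum_{r>N}\widehat h_{r,\jmath}|\hat F_r|^2\le \max_{r>N}\frac{\widehat h_{r,\jmath}}{\widehat h_{r,s}}\sum_{r>N}\widehat h_{r,s}|\hat F_r|^2 .
\]
By \eqref{eq:R-hnk}, the ratio equals \(\frac{(r-s)!}{(r-\jmath)!}\frac{\Gamma(r+\jmath+\mu+\upsilon+1)}{\Gamma(r+s+\mu+\upsilon+1)}\). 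This is decreasing in \(r\), so its maximum is attained at \(r=N+1\). The Gamma-ratio is then bounded by \(cN^{\jmath-s}\) using Stirling's formula. The main point requiring care is this Stirling-type bound with constants uniform in \(N\) for fixed \(\mu,\upsilon\), together with the monotonicity in \(r\).

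\emph{Step 4: the simplified bounds.} For fixed \(s\), the factorial ratio is \(\le cN^{\jmath-s}\), and combining with \(N^{(\jmath-s)/2}\) gives the bound \(cN^{(3/2)(\jmath-s)}\)? Rather, one should take the cruder form directly from the Gamma-ratio estimate in Step 3, which gives \(cN^{\jmath-s}\) and hence \eqref{eq:2.16R}. Setting \(\jmath=0\) gives \eqref{eq:2.17R}.

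\emph{Step 5: the ordinary-derivative bound.} For \eqref{eq:2.18R}, use \(\partial_t=\rho(1-t)^{\rho-1}\mathcal{D}_\rho\). With \(e:=f-\Pi_{N,\varkappa^{\mu,\upsilon,\rho}}f\), this gives
\[
\tilde\varkappa^{\mu,\upsilon,\rho}(\partial_t e)^2=\rho(1-t)^{\rho\mu+2\rho-1}\bigl(1-(1-t)^\rho\bigr)^{\upsilon+1}(\mathcal{D}_\rho e)^2=\varkappa^{\mu+1,\upsilon+1,\rho}(\mathcal{D}_\rho e)^2 .
\]
So \eqref{eq:2.18R} is exactly the case \(\jmath=1\) of \eqref{eq:2.16R}.
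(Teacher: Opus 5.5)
Your proof is correct. The self-contained variant inside your Step~3 is essentially the paper's argument, but your main route (Steps~1--3) is genuinely different.

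\textbf{The paper's route.} The paper never identifies the projector with a classical one. It expands $f$ in $\{\mathcal{P}_r^{\mu,\upsilon,\rho}\}$ and applies the derivative orthogonality \eqref{eq:R-deriv-orth} directly in the $t$-variable. It then bounds $\max_{r\ge N+1}\widehat h_{r,\jmath}^{\mu,\upsilon}/\widehat h_{r,s}^{\mu,\upsilon}$ by its value at $r=N+1$ and estimates the Gamma quotient by $N^{\jmath-s}$ using $\mu+\upsilon+2>0$. Only at the end does it invoke \eqref{eq:2.14R} to trade $\|\mathcal{D}_\rho^s f\|_{0,\varkappa^{\mu+s,\upsilon+s,\rho}}$ for the norm of $\partial_t^s\{f(1-(1-t)^{1/\rho})\}$.

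\textbf{Your route.} You show that $\eta=1-(1-t)^\rho$ conjugates $\Pi_{N,\varkappa^{\mu,\upsilon,\rho}}$ to the shifted Jacobi projector and $\mathcal{D}_\rho^\jmath$ to $\partial_\eta^\jmath$ with matching weights. Then \eqref{eq:2.15R} is literally the classical estimate for $F$.

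\textbf{What each buys.} Your route is cleaner. It also inherits from the classical theory a point that both the paper and your variant gloss over: that the expansion may be differentiated term by term, so that $\mathcal{D}_\rho^\jmath f$ has coefficients $\widehat d_{r,\jmath}^{\mu,\upsilon}\hat f_r^{\mu,\upsilon}$. The transplant from $[-1,1]$ is harmless, since $x=2\eta-1$ multiplies both sides by the same factor $2^{-(\mu+\upsilon+1)/2}$. Likewise, the classical factor $(N+s)^{(\jmath-s)/2}$ is $\le N^{(\jmath-s)/2}$. In exchange, the paper's route stays inside its own framework and makes the ratio explicit.

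\textbf{A slip in Step~4.} The factorial ratio in \eqref{eq:2.15R} sits under a square root. So for fixed $s$ one has $\sqrt{(N-s+1)!/(N-\jmath+1)!}\le cN^{(\jmath-s)/2}$, and multiplying by $N^{(\jmath-s)/2}$ gives $cN^{\jmath-s}$ at once. The exponent $\tfrac32(\jmath-s)$ you first wrote would be a spurious (false) improvement, and no retreat to a ``cruder form'' is needed.

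Finally, the Gamma quotient at $r=N+1$ needs no Stirling. It is the reciprocal of the $s-\jmath$ factors $N+i+\mu+\upsilon+2\ge N$, for $\jmath\le i\le s-1$.
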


\begin{proof}
Assume that
\[
f\bigl(1-(1-t)^{1/\rho}\bigr)\in B_{\mu,\upsilon}^{s,1}(J).
\]
By Lemma~\ref{lem:2.6R}, this condition is equivalent to
\(f\in B_{\mu,\upsilon}^{s,\rho}(J)\). Using the expansion
\eqref{eq:2.13R} and the derivative orthogonality relation of the fractional
backward orthogonal functions, we obtain
\begin{equation}\label{eq:2.19R-a}
\|\mathcal{D}_\rho^\jmath(f-\Pi_{N,\varkappa^{\mu,\upsilon,\rho}}f)\|^2_{0,\varkappa^{\mu+\jmath,\upsilon+\jmath,\rho}}
=
\sum_{r=N+1}^{\infty}\widehat{h}_{r,\jmath}^{\mu,\upsilon}
|\hat f_r^{\mu,\upsilon}|^2
=
\sum_{r=N+1}^{\infty}
\frac{\widehat{h}_{r,\jmath}^{\mu,\upsilon}}
{\widehat{h}_{r,s}^{\mu,\upsilon}}
\widehat{h}_{r,s}^{\mu,\upsilon}
|\hat f_r^{\mu,\upsilon}|^2.
\end{equation}
Hence,
\begin{align}
\|\mathcal{D}_\rho^\jmath(f-\Pi_{N,\varkappa^{\mu,\upsilon,\rho}}f)\|^2_{0,\varkappa^{\mu+\jmath,\upsilon+\jmath,\rho}}
&\le
\max_{r\ge N+1}
\left\{
\frac{\widehat{h}_{r,\jmath}^{\mu,\upsilon}}
{\widehat{h}_{r,s}^{\mu,\upsilon}}
\right\}
\sum_{r=0}^{\infty}
\widehat{h}_{r,s}^{\mu,\upsilon}
|\hat f_r^{\mu,\upsilon}|^2
\nonumber\\
&\le
\frac{\widehat{h}_{N+1,\jmath}^{\mu,\upsilon}}
{\widehat{h}_{N+1,s}^{\mu,\upsilon}}
\|\mathcal{D}_\rho^s f\|^2_{0,\varkappa^{\mu+s,\upsilon+s,\rho}}.
\label{eq:2.19R}
\end{align}
Using the explicit expression for \(\widehat{h}_{r,k}^{\mu,\upsilon}\), we find
that, for \(0\le \jmath\le s\le N+1\),
\begin{align}
\frac{\widehat{h}_{N+1,\jmath}^{\mu,\upsilon}}
{\widehat{h}_{N+1,s}^{\mu,\upsilon}}
&=
\frac{\Gamma(N+\mu+\upsilon+\jmath+2)}
{\Gamma(N+\mu+\upsilon+s+2)}
\frac{(N-s+1)!}{(N-\jmath+1)!}
\nonumber\\
&=
\frac{1}
{(N+\mu+\upsilon+\jmath+2)\cdots
(N+\mu+\upsilon+s+1)}
\frac{(N-s+1)!}{(N-\jmath+1)!}
\nonumber\\
&\le
N^{\,\jmath-s}
\frac{(N-s+1)!}{(N-\jmath+1)!},
\label{eq:2.20R}
\end{align}
provided that \(\mu+\upsilon+2>0\). Combining
\eqref{eq:2.19R}, \eqref{eq:2.20R}, and \eqref{eq:2.14R} gives
\eqref{eq:2.15R}.

To obtain the simplified asymptotic estimate \eqref{eq:2.16R}, we use the
standard Gamma-function ratio bound
\begin{equation}\label{eq:2.21R-a}
\frac{\Gamma(r+c)}{\Gamma(r+d)}
\le
\alpha_r^{c,d}r^{c-d},
\qquad r+c>1,\quad r+d>1,
\end{equation}
where
\begin{equation}\label{eq:2.21R-b}
\alpha_r^{c,d}
=
\exp\Bigl(
\frac{c-d}{2(r+d-1)}
+\frac{1}{12(r+c-1)}
+\frac{(c-1)(d-1)}{r}
\Bigr).
\end{equation}
Consequently,
\begin{equation}\label{eq:2.21R}
\frac{(N-s+1)!}{(N-\jmath+1)!}
=
\frac{\Gamma(N-s+2)}{\Gamma(N-\jmath+2)}
\le
\alpha_N^{2-s,2-\jmath}N^{\,\jmath-s},
\end{equation}
where \(\alpha_N^{2-s,2-\jmath}\) remains bounded and approaches one for fixed
\(s\) as \(N\to\infty\). Therefore, \eqref{eq:2.16R} follows from
\eqref{eq:2.15R}.

The estimate \eqref{eq:2.17R} follows from \eqref{eq:2.16R} by taking
\(\jmath=0\). Moreover, since
\[
\mathcal{D}_\rho w(t)=\frac{(1-t)^{1-\rho}}{\rho}w'(t),
\]
we have
\[
\|\mathcal{D}_\rho w\|_{0,\varkappa^{\mu+1,\upsilon+1,\rho}}
=
\|\partial_t w\|_{0,\tilde\varkappa^{\mu,\upsilon,\rho}},
\]
with \(\tilde\varkappa^{\mu,\upsilon,\rho}\) defined in
\eqref{eq:2.18R-a}. Hence, \eqref{eq:2.18R} follows from
\eqref{eq:2.16R} with \(\jmath=1\). The proof is complete.
\end{proof}

\section{Gauss-type interpolation}
\label{sec4}

This section develops the Gauss-type interpolation framework associated with
the fractional backward orthogonal functions. The corresponding stability
bounds, inverse inequalities, interpolation-error estimates, and operator
bounds are established in weighted spaces adapted to the terminal-endpoint
structure. These results will be used later to control the discrete residuals
arising in the fractional backward spectral collocation scheme.

Let \(h_{j,\rho}^{\mu,\upsilon}(t)\) denote the backward Zaky--Lagrange basis
function defined by
\begin{equation}\label{eq:2.22R}
h_{j,\rho}^{\mu,\upsilon}(t)
=
\prod_{\substack{i=0 \\ i\neq j}}^{N}
\frac{z(t)-z(t_i)}{z(t_j)-z(t_i)}=
\prod_{\substack{i=0 \\ i\neq j}}^{N}
\frac{(1-t)^\rho-(1-t_i)^\rho}
{(1-t_j)^\rho-(1-t_i)^\rho},
\qquad 0\le j\le N,
\end{equation}
where
$z(t)=1-(1-t)^\rho,$
and \(\{t_i\}_{i=0}^{N}\) are the Zaky--Jacobi--Gauss nodes in \(J=(0,1)\),
namely, the zeros of
\(\mathcal{P}_{N+1}^{\mu,\upsilon,\rho}(t)\). Hence,
\begin{equation}\label{eq:2.22R-a}
h_{j,\rho}^{\mu,\upsilon}(t_i)=\delta_{ij},
\qquad 0\le i,j\le N.
\end{equation}

Define
\[
z_i=z(t_i)=1-(1-t_i)^\rho,
\qquad 0\le i\le N.
\]
Then \(\{z_i\}_{i=0}^{N}\) are precisely the shifted Jacobi--Gauss nodes,
that is, the zeros of \(\mathcal{P}_{N+1}^{\mu,\upsilon,1}(z)\). Consequently,
\begin{equation}\label{eq:2.23R}
h_{s,\rho}^{\mu,\upsilon}(t)
=
h_{s,1}^{\mu,\upsilon}(z)
:=
\prod_{\substack{i=0 \\ i\neq s}}^{N}
\frac{z-z_i}{z_s-z_i},
\qquad 0\le s\le N.
\end{equation}

We define the Zaky--Jacobi--Gauss interpolation operator
\(J_{N,\rho}^{\mu,\upsilon}\) by
\begin{equation}\label{eq:2.24R}
J_{N,\rho}^{\mu,\upsilon}f(t)
=
\sum_{j=0}^{N}f(t_j)\,h_{j,\rho}^{\mu,\upsilon}(t).
\end{equation}
Under the backward endpoint transformation \(z=1-(1-t)^\rho\), this operator
can be written as
\begin{equation}\label{eq:2.24R-a}
J_{N,\rho}^{\mu,\upsilon}f(t)
=
\sum_{j=0}^{N}
f\!\left(1-(1-z_j)^{1/\rho}\right)
h_{j,1}^{\mu,\upsilon}(z)
=
J_{N,1}^{\mu,\upsilon}
\Bigl(f\!\left(1-(1-z)^{1/\rho}\right)\Bigr).
\end{equation}

\begin{lemma}\label{lem:2.7R}
For \(\rho=1\), the interpolation operator \(J_{N,1}^{\mu,\upsilon}\) satisfies
\begin{equation}\label{eq:2.25R-a}
\|J_{N,1}^{\mu,\upsilon}f\|_{0,\varkappa^{\mu,\upsilon,1}}
\le
c\Big(
\|f\|_{0,\varkappa^{\mu,\upsilon,1}}
+
N^{-1}
\|\partial_t f\|_{0,\varkappa^{\mu+1,\upsilon+1,1}}
\Big),
\end{equation}
for all \(f\in B_{\mu,\upsilon}^{1,1}(J)\).
\end{lemma}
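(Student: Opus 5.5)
This is the classical stability bound for Jacobi--Gauss interpolation (cf. Shen--Tang--Wang, Theorem~3.41 and its proof), transplanted to \(J=[0,1]\). The plan is to prove it on the reference interval \([-1,1]\) and then map back. The affine map \(x=2t-1\) sends \(\varkappa^{\mu,\upsilon,1}(t)\,dt=(1-t)^\mu t^\upsilon dt\) to a constant multiple of the Jacobi weight \((1-x)^\mu(1+x)^\upsilon dx\). It also rescales \(\partial_t=2\partial_x\). Hence both sides of \eqref{eq:2.25R-a} change only by constants depending on \(\mu,\upsilon\), and it suffices to prove the estimate for the standard Jacobi--Gauss interpolant \(I_N\) at the zeros \(x_j\) of \(\mathcal{P}_{N+1}^{\mu,\upsilon}\).

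\textbf{Step 1 (exact quadrature).} Since \(I_Nf\) has degree at most \(N\), \((I_Nf)^2\) has degree at most \(2N\le 2N+1\). The Gauss rule with weights \(\omega_j\) is therefore exact, which gives
\[
\|I_Nf\|^2_{0,\varkappa^{\mu,\upsilon}}=\sum_{j=0}^N f(x_j)^2\omega_j .
\]
The task thus reduces to bounding this discrete sum by the two continuous terms on the right-hand side of \eqref{eq:2.25R-a}.

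\textbf{Step 2 (local Sobolev-type bound).} I will use the known asymptotics of the Gauss data.
\begin{itemize}
\item Writing \(x_j=\cos\theta_j\), the nodes satisfy \(\theta_j\simeq (j+1/2)\pi/(N+1)\), uniformly separated by about \(1/N\).
\item The weights satisfy \(\omega_j\simeq \frac{\pi}{N+1}(1-x_j)^{\mu+1/2}(1+x_j)^{\upsilon+1/2}\) (Szeg\H{o}'s asymptotics).
\end{itemize}
Around each \(\theta_j\), take an interval \(I_j\) in the \(\theta\)-variable of length about \(1/N\). These intervals can be chosen pairwise disjoint and covering \((0,\pi)\). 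On each \(I_j\), apply the one-dimensional inequality
\[
|g(\theta_j)|^2\le \frac{c}{|I_j|}\int_{I_j}|g|^2\,d\theta+c|I_j|\int_{I_j}|g'|^2\,d\theta
\]
to \(g(\theta)=f(\cos\theta)\). Multiply by \(\omega_j\) and use that on \(I_j\) the weight factor \((1-x)^{\mu+1/2}(1+x)^{\upsilon+1/2}\) is comparable to its value at \(x_j\). The first term then becomes \(c\int_{I_j}f^2(1-x)^{\mu}(1+x)^{\upsilon}dx\), after converting \(d\theta=dx/\sqrt{1-x^2}\). The second term carries a factor \(N^{-2}\) and \(|g'|^2=(1-x^2)|f'|^2\). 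After the same conversion it becomes \(cN^{-2}\int_{I_j}|f'|^2(1-x)^{\mu+1}(1+x)^{\upsilon+1}dx\).

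\textbf{Step 3 (summation).} Summing over \(j\), using disjointness of the \(I_j\) and \(\sqrt{a+b}\le\sqrt a+\sqrt b\), yields \eqref{eq:2.25R-a}. Transforming back to \(t\) via Step~0 completes the argument.

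\textbf{Main obstacle.} The hardest step is the uniform comparability of the weight on the intervals \(I_j\) near the endpoints, where the factors \((1\mp x)\) degenerate. This requires the sharp Gauss node and weight asymptotics to hold uniformly in \(j\), including the first and last nodes. I will cite these from Szeg\H{o} or Shen--Tang--Wang rather than rederive them. I expect to need \(\mu,\upsilon>-1\) only, though some references state the bound for \(\mu,\upsilon\ge -1/2\) and would need the general case checked.
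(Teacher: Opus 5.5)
The paper does not prove Lemma~\ref{lem:2.7R}; it quotes it as the classical $\rho=1$ Jacobi--Gauss stability estimate. Your argument is the standard proof of that estimate, so the route is right: an affine reduction to $[-1,1]$, exactness of the Gauss rule on $(I_Nf)^2$, and a local $H^1$ bound in $\theta$ combined with node-spacing and Christoffel-number asymptotics.

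One step fails as written, and citing sharper asymptotics will not repair it. If the $I_j$ cover $(0,\pi)$, then $I_0$ contains $\theta=0$. There $(1-\cos\theta)^{\mu+1/2}\sim\theta^{2\mu+1}$ tends to $0$ when $\mu>-\tfrac12$; the same happens at $\theta=\pi$ when $\upsilon>-\tfrac12$. The weight on $I_0$ is then not comparable to its value at $\theta_0$, so the conversion $\omega_0|I_0|^{-1}\int_{I_0}|g|^2\,d\theta\lesssim\int_{I_0}|g|^2(1-\cos\theta)^{\mu+1/2}(1+\cos\theta)^{\upsilon+1/2}\,d\theta$ breaks down, and so does its derivative analogue. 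For $\mu>\tfrac12$ the unweighted integral $\int_{I_0}|g|^2\,d\theta$ can even be infinite for admissible $f$; for example, $f(x)\sim(1-x)^{-a/2}$ near $x=1$ with $\tfrac12\le a<\mu$.

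The covering property is never used, since Step~3 needs only disjointness. Take instead $I_0=[\theta_0/2,(\theta_0+\theta_1)/2]$, $I_N=[(\theta_{N-1}+\theta_N)/2,(\theta_N+\pi)/2]$, and midpoint intervals in between. By the Mehler--Heine (Bessel-zero) asymptotics, $\theta_0\sim N^{-1}$ and $\pi-\theta_N\sim N^{-1}$ for all $\mu,\upsilon>-1$. Hence these intervals have length $\sim N^{-1}$. On each $I_j$ the ratios $\theta/\theta_j$ and $(\pi-\theta)/(\pi-\theta_j)$ are bounded above and below, which is exactly the comparability you need. Each $I_j$ is also a compact subinterval of $(0,\pi)$, where $f(\cos\theta)$ is $H^1$, so your one-dimensional inequality applies.

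The argument uses only the upper bound $\omega_j\lesssim N^{-1}(1-x_j)^{\mu+1/2}(1+x_j)^{\upsilon+1/2}$. With this choice of intervals it is complete for all $\mu,\upsilon>-1$.
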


\begin{proposition}\label{prop:interp-stability-R}
If
\[
f\!\left(1-(1-t)^{1/\rho}\right)\in B_{\mu,\upsilon}^{1,1}(J),
\]
then the interpolation operator \(J_{N,\rho}^{\mu,\upsilon}\) satisfies
\begin{equation}\label{eq:2.25R}
\|J_{N,\rho}^{\mu,\upsilon}f\|_{0,\varkappa^{\mu,\upsilon,\rho}}
\le
c\Big(
\|f\|_{0,\varkappa^{\mu,\upsilon,\rho}}
+
N^{-1}
\|\mathcal{D}_\rho f\|_{0,\varkappa^{\mu+1,\upsilon+1,\rho}}
\Big),
\end{equation}
where \(c\) is independent of \(N\) and \(f\).
\end{proposition}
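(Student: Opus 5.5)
We reduce the estimate to the classical case $\rho=1$ of Lemma~\ref{lem:2.7R} through the backward endpoint transformation $z=1-(1-t)^\rho$, exactly as in the proof of Lemma~\ref{lem:2.6R}. Set $F(z):=f\bigl(1-(1-z)^{1/\rho}\bigr)$. By hypothesis $F\in B_{\mu,\upsilon}^{1,1}(J)$, so Lemma~\ref{lem:2.7R} applies to $F$.

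The first step is to identify the interpolant. By \eqref{eq:2.24R-a}, $J_{N,\rho}^{\mu,\upsilon}f(t)=\bigl(J_{N,1}^{\mu,\upsilon}F\bigr)(z(t))$. The point is that the nodes $t_j$ map to the shifted Jacobi--Gauss nodes $z_j$, and that $h_{j,\rho}^{\mu,\upsilon}(t)=h_{j,1}^{\mu,\upsilon}(z)$ by \eqref{eq:2.23R}.

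The second step is to transfer the weighted norms. The change of variables $\eta=z(t)$, with $d\eta=\rho(1-t)^{\rho-1}dt$, is the $k=0$ case of the computation \eqref{eq:2.14R}. It gives, for any function $g$ on $J$ and $G(z):=g(1-(1-z)^{1/\rho})$,
\[
\|g\|_{0,\varkappa^{\mu,\upsilon,\rho}}=\|G\|_{0,\varkappa^{\mu,\upsilon,1}}.
\]
Applying this with $g=J_{N,\rho}^{\mu,\upsilon}f$, so that $G=J_{N,1}^{\mu,\upsilon}F$, yields
\[
\|J_{N,\rho}^{\mu,\upsilon}f\|_{0,\varkappa^{\mu,\upsilon,\rho}}=\|J_{N,1}^{\mu,\upsilon}F\|_{0,\varkappa^{\mu,\upsilon,1}}.
\]
Applying it with $g=f$ gives $\|f\|_{0,\varkappa^{\mu,\upsilon,\rho}}=\|F\|_{0,\varkappa^{\mu,\upsilon,1}}$.

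The $k=1$ case of \eqref{eq:2.14R} uses the identity $\mathcal{D}_\rho f(t)=\partial_z F(z)$. It gives
\[
\|\mathcal{D}_\rho f\|_{0,\varkappa^{\mu+1,\upsilon+1,\rho}}=\|\partial_z F\|_{0,\varkappa^{\mu+1,\upsilon+1,1}}.
\]

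Substituting these three identities into \eqref{eq:2.25R-a} for $F$ yields \eqref{eq:2.25R}, with the same constant $c$ as in Lemma~\ref{lem:2.7R}. That constant depends only on $\mu$ and $\upsilon$, so it is independent of $N$ and $f$.

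The main point needing care is the justification of the two identities $J_{N,\rho}^{\mu,\upsilon}f=(J_{N,1}^{\mu,\upsilon}F)\circ z$ and $\mathcal{D}_\rho f=(\partial_zF)\circ z$. We also need that $z$ is a $C^1$ bijection of $(0,1)$ onto itself. This holds because $z$ is strictly increasing with $z(0)=0$ and $z(1)=1$, so the substitution is legitimate even though the Jacobian $\rho(1-t)^{\rho-1}$ is singular at $t=1$. Since every step is an exact isometric identity, no constant is lost in the transfer.
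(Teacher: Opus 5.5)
Your proposal is correct and follows essentially the same route as the paper. The paper also rewrites $J_{N,\rho}^{\mu,\upsilon}f$ via \eqref{eq:2.24R-a} and uses the substitution $z=1-(1-t)^\rho$ to obtain the isometry \eqref{eq:2.26R} with the $\rho=1$ interpolant of $f(1-(1-z)^{1/\rho})$. It then applies Lemma~\ref{lem:2.7R} and converts the right-hand side back through the norm identities \eqref{eq:2.14R} of Lemma~\ref{lem:2.6R}, which are your $k=0$ and $k=1$ cases; your justification of the substitution despite the singular Jacobian at $t=1$ makes explicit what the paper leaves implicit.
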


\begin{proof}
Using the change of variables
\[
z=1-(1-t)^\rho,
\]
in \eqref{eq:2.24R}, we obtain
\begin{align}
\|J_{N,\rho}^{\mu,\upsilon}f\|^2_{0,\varkappa^{\mu,\upsilon,\rho}}
&=
\int_0^1
\Bigg[
\sum_{i=0}^{N}
f(t_i)\,h_{i,1}^{\mu,\upsilon}(z(t))
\Bigg]^2
\rho(1-t)^{\rho(\mu+1)-1}z(t)^\upsilon\,dt
\nonumber\\
&=
\int_0^1
\Bigg[
\sum_{i=0}^{N}
f\!\left(1-(1-z_i)^{1/\rho}\right)
h_{i,1}^{\mu,\upsilon}(z)
\Bigg]^2
(1-z)^\mu z^\upsilon\,dz
\nonumber\\
&=
\Big\|
J_{N,1}^{\mu,\upsilon}
\Bigl(f\!\left(1-(1-z)^{1/\rho}\right)\Bigr)
\Big\|^2_{0,\varkappa^{\mu,\upsilon,1}}.
\label{eq:2.26R}
\end{align}
Applying Lemma~\ref{lem:2.7R} gives
\begin{align}
\|J_{N,\rho}^{\mu,\upsilon}f\|_{0,\varkappa^{\mu,\upsilon,\rho}}
&=
\Big\|
J_{N,1}^{\mu,\upsilon}
\Bigl(f\!\left(1-(1-t)^{1/\rho}\right)\Bigr)
\Big\|_{0,\varkappa^{\mu,\upsilon,1}}
\nonumber\\
&\le
c\Big(
\|f(1-(1-t)^{1/\rho})\|_{0,\varkappa^{\mu,\upsilon,1}}
+
N^{-1}
\|\partial_t f(1-(1-t)^{1/\rho})\|_{0,\varkappa^{\mu+1,\upsilon+1,1}}
\Big).
\label{eq:2.27R}
\end{align}
The desired estimate follows directly from Lemma~\ref{lem:2.6R}.
\end{proof}

\begin{proposition}\label{prop:2.3R}
For any \(\phi \in P^{\rho}_{N}(J)\), the following inverse estimates hold:
\begin{equation}\label{eq:2.28R}
\Vert \partial _{t}\phi \Vert _{0,\tilde{\varkappa }^{\mu ,\upsilon ,\rho }}
\le
\sqrt{\sigma ^{\mu ,\upsilon }_{N}}
\Vert \phi \Vert _{0,\varkappa ^{\mu ,\upsilon ,\rho }},
\end{equation}
and
\begin{equation}\label{eq:2.29R}
\Vert \mathcal{D}^{k}_{\rho }\phi \Vert _{0,\varkappa ^{\mu +k,\upsilon +k,\rho }}
\le c N^{k}\Vert \phi \Vert _{0,\varkappa ^{\mu ,\upsilon ,\rho }},
\qquad k\ge 1,
\end{equation}
where
\[
\sigma_N^{\mu,\upsilon}=N(N+\mu+\upsilon+1),
\]
and \(c\) is independent of \(N\) and \(\phi\), for fixed \(k\).
\end{proposition}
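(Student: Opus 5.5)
Expand \(\phi\in P_N^\rho(J)\) in the fractional backward orthogonal basis, \(\phi=\sum_{r=0}^{N}\hat\phi_r\,\mathcal{P}_r^{\mu,\upsilon,\rho}\), with \(\hat\phi_r\) given by \eqref{eq:2.13R-b}. Lemma~\ref{lem:R-orth} then gives Parseval's identity
\[
\|\phi\|^2_{0,\varkappa^{\mu,\upsilon,\rho}}=\sum_{r=0}^{N}\widehat{\beta}_r^{\mu,\upsilon}|\hat\phi_r|^2 .
\]
For \eqref{eq:2.28R}, first note the identity \(\|\partial_t w\|_{0,\tilde\varkappa^{\mu,\upsilon,\rho}}=\|\mathcal{D}_\rho w\|_{0,\varkappa^{\mu+1,\upsilon+1,\rho}}\) used in the proof of Proposition~\ref{prop:2.1R}. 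Next apply the derivative orthogonality \eqref{eq:R-deriv-orth} with \(k=1\); equivalently, use \eqref{eq:R-proof-1}, which comes from integration by parts against the Sturm--Liouville operator of Lemma~\ref{lem:R-SL}. This gives
\[
\|\mathcal{D}_\rho\phi\|^2_{0,\varkappa^{\mu+1,\upsilon+1,\rho}}
=\sum_{r=1}^{N}\sigma_r^{\mu,\upsilon}\widehat{\beta}_r^{\mu,\upsilon}|\hat\phi_r|^2 .
\]
Since \(\sigma_r^{\mu,\upsilon}=r(r+\mu+\upsilon+1)\) is nondecreasing in \(r\) (because \(\mu+\upsilon+2>0\)), each term is bounded by \(\sigma_N^{\mu,\upsilon}\widehat{\beta}_r^{\mu,\upsilon}|\hat\phi_r|^2\). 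Comparing with Parseval yields \eqref{eq:2.28R} with the sharp constant \(\sqrt{\sigma_N^{\mu,\upsilon}}\). One can also check directly that \(\widehat{h}_{r,1}^{\mu,\upsilon}=\sigma_r^{\mu,\upsilon}\widehat{\beta}_r^{\mu,\upsilon}\) from \eqref{eq:R-hnk} and \eqref{eq:2.5}.

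For \eqref{eq:2.29R} I would iterate rather than compute \(\widehat{h}_{r,k}/\widehat{\beta}_r\) directly. By Lemma~\ref{lem:R-deriv}, \(\mathcal{D}_\rho\) maps \(P_N^\rho(J)\) into \(P_{N-1}^\rho(J)\). It is therefore natural to apply the first-order bound with shifted parameters \((\mu+j,\upsilon+j)\), \(j=0,\dots,k-1\), to \(\mathcal{D}_\rho^j\phi\in P^\rho_{N-j}(J)\). The \(k=1\) argument above is valid for any parameters \(\mu+j,\upsilon+j>-1\), and each step contributes the factor \(\sqrt{\sigma_{N-j}^{\mu+j,\upsilon+j}}\). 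Chaining the steps gives
\[
\|\mathcal{D}_\rho^k\phi\|_{0,\varkappa^{\mu+k,\upsilon+k,\rho}}\le\prod_{j=0}^{k-1}\sqrt{(N-j)(N+\mu+\upsilon+j+1)}\,\|\phi\|_{0,\varkappa^{\mu,\upsilon,\rho}}\le cN^k\|\phi\|_{0,\varkappa^{\mu,\upsilon,\rho}} .
\]
Here \(c\) depends only on \(k,\mu,\upsilon\). If \(N<k\), the left side vanishes and there is nothing to prove.

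As an alternative check, one can compute the ratio
\[
\widehat{h}_{r,k}^{\mu,\upsilon}/\widehat{\beta}_r^{\mu,\upsilon}
=\frac{r!}{(r-k)!}\,\frac{\Gamma(r+k+\mu+\upsilon+1)}{\Gamma(r+\mu+\upsilon+1)} ,
\]
which is increasing in \(r\) and bounded by \(cN^{2k}\) at \(r=N\), for instance via the Gamma ratio bound \eqref{eq:2.21R-a}.

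The only delicate points are the following. The monotonicity in \(r\) must hold, which requires \(\mu+\upsilon+2>0\); this is automatic since \(\mu,\upsilon>-1\). One must also confirm that \(\mathcal{D}_\rho^j\phi\) indeed lies in \(P^\rho_{N-j}(J)\), which Lemma~\ref{lem:R-deriv} guarantees. I expect no real obstacle beyond this bookkeeping of constants.
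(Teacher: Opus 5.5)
Your proof is correct. For \eqref{eq:2.28R} you do exactly what the paper does: Parseval in the basis $\mathcal{P}_r^{\mu,\upsilon,\rho}$, the identity $\widehat{h}_{r,1}^{\mu,\upsilon}=\sigma_r^{\mu,\upsilon}\widehat{\beta}_r^{\mu,\upsilon}$ from the Sturm--Liouville relation, and monotonicity of $\sigma_r^{\mu,\upsilon}$ in $r$.

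For \eqref{eq:2.29R} your main route is genuinely different. The paper applies the $k$-th order derivative orthogonality \eqref{eq:R-deriv-orth} directly. It then bounds $\widehat{h}_{r,k}^{\mu,\upsilon}/\widehat{\beta}_r^{\mu,\upsilon}$ by its value at $r=N$, as in \eqref{eq:2.31R}, and invokes the Gamma-ratio estimate \eqref{eq:2.32R}. That is precisely your ``alternative check''. Your iteration needs only the first-order estimate with shifted parameters and the mapping property $\mathcal{D}_\rho P_N^\rho(J)\subset P_{N-1}^\rho(J)$. It also loses nothing, because
\[
\frac{\widehat{h}_{r,k}^{\mu,\upsilon}}{\widehat{\beta}_r^{\mu,\upsilon}}=\frac{r!}{(r-k)!}\,\frac{\Gamma(r+k+\mu+\upsilon+1)}{\Gamma(r+\mu+\upsilon+1)}=\prod_{j=0}^{k-1}\sigma_{r-j}^{\mu+j,\upsilon+j}.
\]
So your product of step constants equals the paper's constant $\Gamma(N+k+\mu+\upsilon+1)\,N!/\bigl(\Gamma(N+\mu+\upsilon+1)(N-k)!\bigr)$. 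Both bounds are attained at $\phi=\mathcal{P}_N^{\mu,\upsilon,\rho}$, since each derivative $\mathcal{D}_\rho^j\mathcal{P}_N^{\mu,\upsilon,\rho}$ is a multiple of the top basis element $\mathcal{P}_{N-j}^{\mu+j,\upsilon+j,\rho}$.

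What your iteration buys is that the final $cN^{k}$ bound follows from the elementary factor estimates $N-j\le N$ and $N+\mu+\upsilon+j+1\le cN$. You need neither the explicit $\widehat{h}_{r,k}^{\mu,\upsilon}$ nor a Gamma-function asymptotic. The paper's direct route, in turn, exhibits the sharp constant and its monotonicity in $r$ in a single line. That monotonicity holds because every factor $(r-j)$ and $(r+\mu+\upsilon+1+j)$ is positive and increasing for $r\ge k$, a point the paper leaves implicit.
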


\begin{proof}
For any \(\phi \in P^{\rho}_{N}(J)\), we write
\begin{equation}\label{eq:2.30R}
\phi (t)=\sum _{i=0}^{N}\hat{\phi }_{i}^{\mu ,\upsilon }
\mathcal{P}^{\mu ,\upsilon ,\rho }_{i}(t),
\qquad
\hat{\phi }_{i}^{\mu ,\upsilon }
=
\frac{(\phi ,\mathcal{P}^{\mu ,\upsilon ,\rho }_{i})_{\varkappa ^{\mu ,\upsilon ,\rho }}}
{\widehat{\beta }_{i}^{\mu ,\upsilon }},
\end{equation}
where \(\widehat{\beta }_{i}^{\mu ,\upsilon }\) is given in \eqref{eq:2.5}.
By orthogonality,
\[
\Vert \phi \Vert ^{2}_{0,\varkappa ^{\mu ,\upsilon ,\rho }}
=
\sum _{i=0}^{N}
\widehat{\beta }_{i}^{\mu ,\upsilon }
|\hat{\phi }_{i}^{\mu ,\upsilon }|^{2}.
\]

Applying \(\mathcal{D}_{\rho}\) to \eqref{eq:2.30R} and using the
Sturm--Liouville relation, we obtain
\begin{align}
\Vert \mathcal{D}_{\rho }\phi \Vert ^{2}_{0,\varkappa ^{\mu +1,\upsilon +1,\rho }}
&=
\Vert \partial _{t}\phi \Vert ^{2}_{0,\tilde{\varkappa }^{\mu ,\upsilon ,\rho }}
=
\sum _{i=0}^{N}
\sigma ^{\mu ,\upsilon }_{i}
\widehat{\beta }_{i}^{\mu ,\upsilon }
|\hat{\phi }_{i}^{\mu ,\upsilon }|^{2}
\nonumber\\
&\le
\sigma ^{\mu ,\upsilon }_{N}
\sum _{i=0}^{N}
\widehat{\beta }_{i}^{\mu ,\upsilon }
|\hat{\phi }_{i}^{\mu ,\upsilon }|^{2}
=
\sigma ^{\mu ,\upsilon }_{N}
\Vert \phi \Vert _{0,\varkappa ^{\mu ,\upsilon ,\rho }}^{2}.
\label{eq:2.28R-proof}
\end{align}
This proves \eqref{eq:2.28R}.

Next, applying \(\mathcal{D}_{\rho}^{k}\) to \eqref{eq:2.30R} and using
\eqref{eq:R-deriv-orth}, we get
\begin{align}
\Vert \mathcal{D}^{k}_{\rho }\phi \Vert ^{2}_{0,\varkappa ^{\mu +k,\upsilon +k,\rho }}
&=
\sum _{r=k}^{N}
\widehat{h}^{\mu ,\upsilon }_{r,k}
|\hat{\phi }^{\mu ,\upsilon }_{r}|^{2}
\nonumber\\
&=
\sum _{r=k}^{N}
\frac{\Gamma (r+k+\mu +\upsilon +1)r!}
{\Gamma (r+\mu +\upsilon +1)(r-k)!}
\widehat{\beta }^{\mu ,\upsilon }_{r}
|\hat{\phi }^{\mu ,\upsilon }_{r}|^{2}
\nonumber\\
&\le
\frac{\Gamma (N+k+\mu +\upsilon +1)N!}
{\Gamma (N+\mu +\upsilon +1)(N-k)!}
\Vert \phi \Vert ^{2}_{0,\varkappa ^{\mu ,\upsilon ,\rho }}.
\label{eq:2.31R}
\end{align}
The standard Gamma-function estimate
\begin{equation}\label{eq:2.32R}
\frac{\Gamma (N+k+\mu +\upsilon +1)N!}
{\Gamma (N+\mu +\upsilon +1)(N-k)!}
\le
cN^{2k}
\end{equation}
then gives \eqref{eq:2.29R}.
\end{proof}

We next derive an error bound for the fractional backward approximation
generated by the Zaky--Jacobi--Gauss interpolation operator.

\begin{proposition}\label{prop:2.4R}
Assume that
\[
f\bigl(1-(1-t)^{1/\rho}\bigr)\in B_{\mu,\upsilon}^{s,1}(J),
\qquad s\ge 1.
\]
Then, for any \(0\le \jmath\le s\le N+1\),
\begin{equation}\label{eq:2.33R-a}
\Vert \mathcal{D}_{\rho }^{\jmath}\bigl(f-J_{N,\rho }^{\mu ,\upsilon }f\bigr)\Vert _{0,\varkappa ^{\mu +\jmath,\upsilon +\jmath,\rho }}
\le
c\sqrt{\frac{(N-s+1)!}{N!}}\,N^{\,\jmath-(s+1)/2}
\Vert \partial _{t}^{s}\bigl\{f\bigl(1-(1-t)^{1/\rho }\bigr)\bigr\}\Vert _{0,\varkappa ^{\mu +s,\upsilon +s,1}}.
\end{equation}
For fixed \(s\), this estimate reduces to
\begin{equation}\label{eq:2.33R}
\Vert \mathcal{D}_{\rho }^{\jmath}\bigl(f-J_{N,\rho }^{\mu ,\upsilon }f\bigr)\Vert _{0,\varkappa ^{\mu +\jmath,\upsilon +\jmath,\rho }}
\le
cN^{\jmath-s}
\Vert \partial _{t}^{s}\bigl\{f\bigl(1-(1-t)^{1/\rho }\bigr)\bigr\}\Vert _{0,\varkappa ^{\mu +s,\upsilon +s,1}},
\end{equation}
where \(c\) is independent of \(N\) and \(f\). In particular,
\begin{equation}\label{eq:2.34R}
\Vert f-J_{N,\rho }^{\mu ,\upsilon }f\Vert _{0,\varkappa ^{\mu ,\upsilon ,\rho }}
\le
cN^{-s}
\Vert \partial _{t}^{s}\bigl\{f\bigl(1-(1-t)^{1/\rho }\bigr)\bigr\}\Vert _{0,\varkappa ^{\mu +s,\upsilon +s,1}},
\end{equation}
and
\begin{equation}\label{eq:2.35R}
\Vert \partial _{t}(f-J_{N,\rho }^{\mu ,\upsilon }f)\Vert _{0,\tilde{\varkappa }^{\mu ,\upsilon ,\rho }}
\le
cN^{1-s}
\Vert \partial _{t}^{s}\bigl\{f\bigl(1-(1-t)^{1/\rho }\bigr)\bigr\}\Vert _{0,\varkappa ^{\mu +s,\upsilon +s,1}}.
\end{equation}
\end{proposition}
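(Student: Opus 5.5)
The plan is to transfer the problem to the classical shifted Jacobi setting ($\rho=1$) through the change of variables $z=1-(1-t)^\rho$, as in the proofs of Lemma~\ref{lem:2.6R} and Proposition~\ref{prop:interp-stability-R}. Set $F(z):=f(1-(1-z)^{1/\rho})$. By \eqref{eq:2.24R-a}, $J_{N,\rho}^{\mu,\upsilon}f(t)=(J_{N,1}^{\mu,\upsilon}F)(z(t))$, so $f-J_{N,\rho}^{\mu,\upsilon}f$ is exactly the composition of $F-J_{N,1}^{\mu,\upsilon}F$ with $z(t)$. Since $\mathcal{D}_\rho$ is differentiation with respect to $z$, the identity \eqref{eq:2.14R} gives, for every $0\le\jmath\le s$,
\[
\|\mathcal{D}_\rho^\jmath(f-J_{N,\rho}^{\mu,\upsilon}f)\|_{0,\varkappa^{\mu+\jmath,\upsilon+\jmath,\rho}}
=
\|\partial_z^\jmath(F-J_{N,1}^{\mu,\upsilon}F)\|_{0,\varkappa^{\mu+\jmath,\upsilon+\jmath,1}}.
\]
It therefore suffices to prove \eqref{eq:2.33R-a} for $\rho=1$ with $F$ in place of $f$.

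For the $\rho=1$ estimate I split the error as
\[
F-J_{N,1}^{\mu,\upsilon}F=(F-\Pi_N F)-J_{N,1}^{\mu,\upsilon}(F-\Pi_N F),
\]
where $\Pi_N=\Pi_{N,\varkappa^{\mu,\upsilon,1}}$. This uses $J_{N,1}^{\mu,\upsilon}\Pi_NF=\Pi_NF$, since interpolation at $N+1$ nodes reproduces $P_N$.

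The first term is controlled directly by Proposition~\ref{prop:2.1R}, whose bound already contains the factor $\sqrt{(N-s+1)!/(N-\jmath+1)!}\,N^{(\jmath-s)/2}$.

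For the second term, which lies in $P_N$, I apply the inverse inequality of Proposition~\ref{prop:2.3R} (at $\rho=1$). This gives $\|\partial_z^\jmath J_{N,1}^{\mu,\upsilon}(F-\Pi_NF)\|_{0,\varkappa^{\mu+\jmath,\upsilon+\jmath,1}}\le cN^\jmath\|J_{N,1}^{\mu,\upsilon}(F-\Pi_NF)\|_{0,\varkappa^{\mu,\upsilon,1}}$. Lemma~\ref{lem:2.7R} then bounds the right-hand side by $cN^\jmath\big(\|F-\Pi_NF\|_{0,\varkappa^{\mu,\upsilon,1}}+N^{-1}\|\partial_z(F-\Pi_NF)\|_{0,\varkappa^{\mu+1,\upsilon+1,1}}\big)$.

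Both pieces are again bounded by \eqref{eq:2.15R}, with $\jmath=0$ and $\jmath=1$ respectively. They give
\[
\sqrt{\tfrac{(N-s+1)!}{(N+1)!}}\,N^{-s/2}
\quad\text{and}\quad
N^{-1}\sqrt{\tfrac{(N-s+1)!}{N!}}\,N^{(1-s)/2}.
\]
Both are $\lesssim\sqrt{(N-s+1)!/N!}\,N^{-(s+1)/2}$, using $(N+1)!\ge N!\,N$. Multiplying by $N^\jmath$ gives the factor $\sqrt{(N-s+1)!/N!}\,N^{\jmath-(s+1)/2}$.

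One also checks that the projection term is dominated by this. We have
\[
\sqrt{\tfrac{(N-s+1)!}{(N-\jmath+1)!}}\,N^{(\jmath-s)/2}\le c\sqrt{\tfrac{(N-s+1)!}{N!}}\,N^{\jmath-(s+1)/2},
\]
because $N!/(N-\jmath+1)!\le N^{\jmath-1}$. Together these prove \eqref{eq:2.33R-a}.

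The remaining statements follow quickly. For fixed $s$, the Gamma ratio bound \eqref{eq:2.21R-a} gives $(N-s+1)!/N!\le cN^{1-s}$, which yields \eqref{eq:2.33R}. Estimate \eqref{eq:2.34R} is the case $\jmath=0$. Estimate \eqref{eq:2.35R} is the case $\jmath=1$ combined with the identity $\|\mathcal{D}_\rho w\|_{0,\varkappa^{\mu+1,\upsilon+1,\rho}}=\|\partial_tw\|_{0,\tilde\varkappa^{\mu,\upsilon,\rho}}$ from the proof of Proposition~\ref{prop:2.1R}.

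The main obstacle is the bookkeeping of factorial factors in the interpolation term. Lemma~\ref{lem:2.7R} requires the $B^{1,1}_{\mu,\upsilon}$ regularity of $F-\Pi_NF$, which holds since $s\ge1$. The edge cases $s=N+1$ and $\jmath$ near $s$ need separate care, because there the bound $N!/(N-\jmath+1)!\le N^{\jmath-1}$ must be verified (it fails for $\jmath=0$). For $\jmath=0$ I will instead compare directly: the factor $\sqrt{(N-s+1)!/(N+1)!}$ is already smaller than the target.
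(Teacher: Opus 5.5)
Your proof is correct and is essentially the paper's argument: the paper likewise bounds $J_{N,\rho}^{\mu,\upsilon}f-\Pi_{N,\varkappa^{\mu,\upsilon,\rho}}f=J_{N,\rho}^{\mu,\upsilon}\bigl(f-\Pi_{N,\varkappa^{\mu,\upsilon,\rho}}f\bigr)$ by interpolation stability together with \eqref{eq:2.15R} at $\jmath=0,1$, applies the inverse inequality \eqref{eq:2.29R}, and concludes with the triangle inequality. The only difference is that the paper works directly in the $\rho$-setting via Proposition~\ref{prop:interp-stability-R} instead of first pulling back to $\rho=1$. One small correction: $N!/(N-\jmath+1)!\le N^{\jmath-1}$ does not fail at $\jmath=0$, since it then reads $1/(N+1)\le 1/N$, so that case needs no separate treatment.
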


\begin{proof}
Combining the projection estimate \eqref{eq:2.15R} with
Proposition~\ref{prop:interp-stability-R}, we obtain
\begin{align}
\Vert J_{N,\rho }^{\mu ,\upsilon }f-\Pi _{N,\varkappa ^{\mu ,\upsilon ,\rho }}f\Vert _{0,\varkappa ^{\mu ,\upsilon ,\rho }}
&=
\Vert J_{N,\rho }^{\mu ,\upsilon }
\bigl(f-\Pi _{N,\varkappa ^{\mu ,\upsilon ,\rho }}f\bigr)
\Vert _{0,\varkappa ^{\mu ,\upsilon ,\rho }}
\nonumber\\
&\le
c\Bigl(
\Vert f-\Pi _{N,\varkappa ^{\mu ,\upsilon ,\rho }}f\Vert _{0,\varkappa ^{\mu ,\upsilon ,\rho }}
+
N^{-1}
\Vert \mathcal{D}_{\rho }
\bigl(f-\Pi _{N,\varkappa ^{\mu ,\upsilon ,\rho }}f\bigr)
\Vert _{0,\varkappa ^{\mu +1,\upsilon +1,\rho }}
\Bigr)
\nonumber\\
&\le
c\,N^{-(1+s)/2}
\sqrt{\frac{(N-s+1)!}{N!}}
\Vert \partial _{t}^{s}\bigl\{f\bigl(1-(1-t)^{1/\rho }\bigr)\bigr\}
\Vert _{0,\varkappa ^{\mu +s,\upsilon +s,1}}.
\label{eq:2.36R}
\end{align}
Using the inverse inequality \eqref{eq:2.29R}, we further have
\begin{align}
\Vert \mathcal{D}_{\rho }^{\jmath}
\bigl(J_{N,\rho }^{\mu ,\upsilon }f-\Pi _{N,\varkappa ^{\mu ,\upsilon ,\rho }}f\bigr)
\Vert _{0,\varkappa ^{\mu +\jmath,\upsilon +\jmath,\rho }}
&\le
cN^{\jmath}
\Vert J_{N,\rho }^{\mu ,\upsilon }f-\Pi _{N,\varkappa ^{\mu ,\upsilon ,\rho }}f
\Vert _{0,\varkappa ^{\mu ,\upsilon ,\rho }}
\nonumber\\
&\le
c\sqrt{\frac{(N-s+1)!}{N!}}\,N^{\,\jmath-(s+1)/2}
\Vert \partial _{t}^{s}\bigl\{f\bigl(1-(1-t)^{1/\rho }\bigr)\bigr\}
\Vert _{0,\varkappa ^{\mu +s,\upsilon +s,1}}.
\label{eq:2.36R-b}
\end{align}
By the triangle inequality and \eqref{eq:2.15R},
\begin{align}
\Vert \mathcal{D}_{\rho }^{\jmath}
\bigl(f-J_{N,\rho }^{\mu ,\upsilon }f\bigr)
\Vert _{0,\varkappa ^{\mu +\jmath,\upsilon +\jmath,\rho }}
&\le
\Vert \mathcal{D}_{\rho }^{\jmath}
\bigl(f-\Pi _{N,\varkappa ^{\mu ,\upsilon ,\rho }}f\bigr)
\Vert _{0,\varkappa ^{\mu +\jmath,\upsilon +\jmath,\rho }}
\nonumber\\
&\quad+
\Vert \mathcal{D}_{\rho }^{\jmath}
\bigl(\Pi _{N,\varkappa ^{\mu ,\upsilon ,\rho }}f-J_{N,\rho }^{\mu ,\upsilon }f\bigr)
\Vert _{0,\varkappa ^{\mu +\jmath,\upsilon +\jmath,\rho }}
\nonumber\\
&\le
c\sqrt{\frac{(N-s+1)!}{N!}}\,N^{\,\jmath-(s+1)/2}
\Vert \partial _{t}^{s}\bigl\{f\bigl(1-(1-t)^{1/\rho }\bigr)\bigr\}
\Vert _{0,\varkappa ^{\mu +s,\upsilon +s,1}}.
\end{align}
This proves \eqref{eq:2.33R-a}. The fixed-\(s\) estimate
\eqref{eq:2.33R} and the special cases \eqref{eq:2.34R}--\eqref{eq:2.35R}
follow by the same Gamma-ratio argument used in Proposition~\ref{prop:2.1R}.
\end{proof}

\begin{lemma}[Weighted Sobolev inequality]\label{lem:2.8R}
Let \(f\in B_{\mu,\upsilon}^{1,\rho}(J)\), and suppose that \(f(\xi)=0\) for
some point \(\xi\in[0,1]\). If
\[
-1<\mu,\upsilon\le -\frac12,
\]
then
\begin{equation}\label{eq:2.37R}
\|f\|_{\infty}
\le
\sqrt{2}\,\|f\|_{0,\varkappa^{\mu,\upsilon,\rho}}^{1/2}
\|\partial_t f\|_{0,\tilde{\varkappa}^{\mu,\upsilon,\rho}}^{1/2}.
\end{equation}
\end{lemma}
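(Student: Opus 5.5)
Fix $t\in[0,1]$ and use the vanishing of $f$ at $\xi$ to write $f^2$ as an integral of its derivative. By the fundamental theorem of calculus,
\[
f^2(t)=\int_\xi^t 2f(s)f'(s)\,ds,
\]
so that
\[
|f(t)|^2\le 2\int_0^1|f(s)|\,|\partial_s f(s)|\,ds .
\]
This is where the factor $\sqrt2$ in \eqref{eq:2.37R} comes from.

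We then split the integrand with the two weights in \eqref{eq:2.37R}. Write
\[
|f||\partial_s f|
=|f|\bigl(\varkappa^{\mu,\upsilon,\rho}\bigr)^{1/2}\cdot|\partial_s f|\bigl(\tilde\varkappa^{\mu,\upsilon,\rho}\bigr)^{1/2}\cdot
\bigl(\varkappa^{\mu,\upsilon,\rho}\tilde\varkappa^{\mu,\upsilon,\rho}\bigr)^{-1/2}.
\]
From \eqref{eq:Rweight} and \eqref{eq:2.18R-a} the product of the two weights is
\[
\varkappa^{\mu,\upsilon,\rho}\,\tilde\varkappa^{\mu,\upsilon,\rho}
=(1-s)^{2\rho(\mu+1)}\bigl(1-(1-s)^\rho\bigr)^{2\upsilon+1}.
\]
Hence
\[
\bigl(\varkappa^{\mu,\upsilon,\rho}\tilde\varkappa^{\mu,\upsilon,\rho}\bigr)^{-1/2}
=(1-s)^{-\rho(\mu+1)}\bigl(1-(1-s)^\rho\bigr)^{-\upsilon-1/2}.
\]
The goal is to show this factor is bounded by $1$ on $[0,1]$. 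Then Cauchy--Schwarz gives
\[
\int_0^1|f||\partial_sf|\,ds\le\|f\|_{0,\varkappa^{\mu,\upsilon,\rho}}\|\partial_t f\|_{0,\tilde\varkappa^{\mu,\upsilon,\rho}},
\]
and taking square roots and the supremum over $t$ yields \eqref{eq:2.37R}.

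\textbf{Main obstacle: boundedness of the weight factor.} For the second factor the exponent is $-\upsilon-\tfrac12\ge 0$ because $\upsilon\le-\tfrac12$, and its base lies in $[0,1]$, so it is at most $1$. The first factor $(1-s)^{-\rho(\mu+1)}$ has a negative exponent since $\mu>-1$, so it blows up at $s=1$. The hypothesis $\mu\le-\tfrac12$ alone does not tame this, because the asymmetric weight $\varkappa^{\mu,\upsilon,\rho}$ carries the extra factor $(1-t)^{\rho-1}$.

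I would first try to make this step work in the transformed variable. Set $\eta=1-(1-t)^\rho$ and $F(\eta)=f(1-(1-\eta)^{1/\rho})$, as in Lemma~\ref{lem:2.6R}. Then $\|f\|_\infty=\|F\|_\infty$ and $\|f\|_{0,\varkappa^{\mu,\upsilon,\rho}}=\|F\|_{0,\varkappa^{\mu,\upsilon,1}}$. A direct change of variables shows
\[
\|\partial_t f\|_{0,\tilde\varkappa^{\mu,\upsilon,\rho}}=\|\partial_\eta F\|_{0,\varkappa^{\mu+1,\upsilon+1,1}},
\]
which is the identity used at the end of the proof of Proposition~\ref{prop:2.1R}.

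Repeating the first two steps for $F$ on $[0,1]$ in $\eta$, the combined weight factor becomes
\[
\bigl((1-\eta)^\mu\eta^\upsilon(1-\eta)^{\mu+1}\eta^{\upsilon+1}\bigr)^{-1/2}=(1-\eta)^{-\mu-1/2}\eta^{-\upsilon-1/2}.
\]
This is at most $1$ exactly when $\mu,\upsilon\le-\tfrac12$, the stated hypotheses. So the argument closes in the $\eta$ variable, and transforming back via Lemma~\ref{lem:2.6R} gives the claim with the constant $\sqrt2$.
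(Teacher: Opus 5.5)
Your final argument in the variable $\eta=1-(1-t)^\rho$ is correct. It is the paper's proof (fundamental theorem of calculus from the zero $\xi$, Cauchy--Schwarz, then a pointwise comparison of weights) carried through the change of variables of Lemma~\ref{lem:2.6R}.

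However, the ``main obstacle'' you describe in the $t$ variable does not exist; it comes from an arithmetic slip. The weight $\tilde\varkappa^{\mu,\upsilon,\rho}$ carries $(1-s)^{\rho\mu+1}$, not $(1-s)^{\rho(\mu+1)+1}$. Writing $\tilde\varkappa^{\mu,\upsilon,\rho}(s)=\rho^{-1}(1-s)^{1-\rho}(1-s)^{\rho(\mu+1)}\bigl(1-(1-s)^\rho\bigr)^{\upsilon+1}$ shows that the factor $(1-s)^{\rho-1}$ in $\varkappa^{\mu,\upsilon,\rho}$ is cancelled exactly. The correct product is
\[
\varkappa^{\mu,\upsilon,\rho}(s)\,\tilde\varkappa^{\mu,\upsilon,\rho}(s)=(1-s)^{\rho(2\mu+1)}\bigl(1-(1-s)^\rho\bigr)^{2\upsilon+1}.
\]
Hence your factor is $(1-s)^{-\rho(\mu+1/2)}\bigl(1-(1-s)^\rho\bigr)^{-(\upsilon+1/2)}$, which is $\le 1$ directly when $\mu,\upsilon\le-\tfrac12$. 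This is precisely the paper's pointwise bound \eqref{eq:2.39R}, i.e.\ $1/\varkappa^{\mu,\upsilon,\rho}\le\tilde\varkappa^{\mu,\upsilon,\rho}$, applied in the original variable.

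In fact the two computations are literally identical. One has $|f|\,|\partial_s f|\,ds=|F|\,|\partial_\eta F|\,d\eta$, and with $1-\eta=(1-s)^\rho$ the $t$-factor equals $(1-\eta)^{-\mu-1/2}\eta^{-\upsilon-1/2}$. So it cannot blow up in one variable while being bounded in the other. You should delete the claimed obstruction and the remark that $\mu\le-\tfrac12$ ``does not tame'' the factor. The detour through $\eta$ is valid but unnecessary.
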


\begin{proof}
For any \(t\in[\xi,1]\), applying the fundamental theorem of calculus followed
by the Cauchy--Schwarz inequality gives
\begin{align}
f^{2}(t)
&=
\int_{\xi}^{t} d\bigl(f^{2}(z)\bigr)
=
\int_{\xi}^{t} 2f(z)\partial_{z}f(z)\,dz
\le
2\int_{\xi}^{1}|f(z)\partial_{z}f(z)|\,dz
\nonumber\\
&=
2\int_{\xi}^{1}
\rho^{1/2}(1-z)^{\frac{\rho(\mu+1)-1}{2}}
\bigl(1-(1-z)^{\rho}\bigr)^{\upsilon/2}|f(z)|
\nonumber\\
&\qquad\qquad\times
\Bigl[
\rho^{-1/2}(1-z)^{-\frac{\rho(\mu+1)-1}{2}}
\bigl(1-(1-z)^{\rho}\bigr)^{-\upsilon/2}
|\partial_{z}f(z)|
\Bigr]\,dz
\nonumber\\
&\le
2\|f\|_{0,\varkappa^{\mu,\upsilon,\rho}}
\Biggl[
\int_{0}^{1}
\rho^{-1}(1-z)^{-\rho(\mu+1)+1}
\bigl(1-(1-z)^{\rho}\bigr)^{-\upsilon}
\bigl(\partial_{z}f(z)\bigr)^{2}\,dz
\Biggr]^{1/2}.
\label{eq:2.38R}
\end{align}
Since \(-1<\mu,\upsilon\le -1/2\), the following bound holds for
all \(z\in J\):
\begin{equation}\label{eq:2.39R}
\rho^{-1}(1-z)^{-\rho(\mu+1)+1}
\bigl(1-(1-z)^{\rho}\bigr)^{-\upsilon}
\le
\rho^{-1}(1-z)^{\rho\mu+1}
\bigl(1-(1-z)^{\rho}\bigr)^{\upsilon+1}
=
\tilde{\varkappa}^{\mu,\upsilon,\rho}(z).
\end{equation}
Inserting \eqref{eq:2.39R} into \eqref{eq:2.38R} yields
\begin{equation}\label{eq:2.40R}
\max_{t\in[\xi,1]}|f(t)|
\le
\sqrt{2}\,\|f\|_{0,\varkappa^{\mu,\upsilon,\rho}}^{1/2}
\|\partial_t f\|_{0,\tilde{\varkappa}^{\mu,\upsilon,\rho}}^{1/2}.
\end{equation}
Repeating the same argument over the interval \([0,\xi]\), one obtains
\begin{equation}\label{eq:2.41R}
\max_{t\in[0,\xi]}|f(t)|
\le
\sqrt{2}\,\|f\|_{0,\varkappa^{\mu,\upsilon,\rho}}^{1/2}
\|\partial_t f\|_{0,\tilde{\varkappa}^{\mu,\upsilon,\rho}}^{1/2}.
\end{equation}
The desired estimate follows by combining \eqref{eq:2.40R} and
\eqref{eq:2.41R}.
\end{proof}

\begin{proposition}[Interpolation error in the \(L^\infty\)-norm]
\label{prop:2.5R}
Let \(-1<\mu,\upsilon\le -\dfrac{1}{2}\). If
\[
f\bigl(1-(1-t)^{1/\rho }\bigr)\in B^{s,1}_{\mu ,\upsilon }(J),
\qquad s\ge 1,
\]
then
\begin{equation}\label{eq:2.42R}
\Vert f-J_{N,\rho }^{\mu ,\upsilon }f\Vert _{\infty }
\le
c N^{1/2-s}
\Vert \partial ^{s}_{t}
\bigl\{f\bigl(1-(1-t)^{1/\rho }\bigr)\bigr\}
\Vert _{0,\varkappa ^{\mu +s,\upsilon +s,1}}.
\end{equation}
\end{proposition}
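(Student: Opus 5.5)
The plan is to apply the weighted Sobolev inequality of Lemma~\ref{lem:2.8R} to the interpolation error
\[
e:=f-J_{N,\rho}^{\mu,\upsilon}f ,
\]
and then control the two factors with the interpolation estimates of Proposition~\ref{prop:2.4R}.

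\textbf{Step 1: a zero of the error and the Sobolev inequality.} By the interpolation property \eqref{eq:2.22R-a}, we have $e(t_j)=0$ at every Zaky--Jacobi--Gauss node $t_j\in(0,1)$, so we may take $\xi=t_0$. We also need $e\in B^{1,\rho}_{\mu,\upsilon}(J)$. This follows from Lemma~\ref{lem:2.6R} together with $s\ge1$, and from the fact that $J_{N,\rho}^{\mu,\upsilon}f\in P_N^\rho(J)$ lies in every such space. Under the standing restriction $-1<\mu,\upsilon\le -\frac12$, Lemma~\ref{lem:2.8R} then gives
\[
\|e\|_\infty\le \sqrt2\,\|e\|_{0,\varkappa^{\mu,\upsilon,\rho}}^{1/2}\,\|\partial_t e\|_{0,\tilde\varkappa^{\mu,\upsilon,\rho}}^{1/2}.
\]

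\textbf{Step 2: bound each factor.} For the first factor, \eqref{eq:2.34R} gives $\|e\|_{0,\varkappa^{\mu,\upsilon,\rho}}\le cN^{-s}\,\|\partial_t^s\{f(1-(1-t)^{1/\rho})\}\|_{0,\varkappa^{\mu+s,\upsilon+s,1}}$. For the second, \eqref{eq:2.35R} gives $\|\partial_t e\|_{0,\tilde\varkappa^{\mu,\upsilon,\rho}}\le cN^{1-s}$ times the same norm. Alternatively, one can use \eqref{eq:2.33R} with $\jmath=1$ and the identity $\|\mathcal{D}_\rho w\|_{0,\varkappa^{\mu+1,\upsilon+1,\rho}}=\|\partial_t w\|_{0,\tilde\varkappa^{\mu,\upsilon,\rho}}$ from the proof of Proposition~\ref{prop:2.1R}.

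\textbf{Step 3: combine.} Taking square roots and multiplying the two bounds yields
\[
N^{-s/2}\,N^{(1-s)/2}=N^{1/2-s},
\]
times the seminorm, which is exactly \eqref{eq:2.42R}. The constant $c$ absorbs $\sqrt2$ and is independent of $N$ and $f$.

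\textbf{Expected obstacle.} The only delicate point is justifying the hypotheses of Lemma~\ref{lem:2.8R}: that $e$ is absolutely continuous on $J$ (so the fundamental theorem of calculus applies on $[\xi,1]$ and $[0,\xi]$), and that the weight comparison \eqref{eq:2.39R} holds in the parameter range. For the first, I would argue from the finiteness of the weighted $B^{1,1}$ norm of the transformed function, using the fact that the weights $(1-z)^{\mu+1}z^{\upsilon+1}$ have exponents at most $1/2$, so $\partial_z$ of the transformed error is locally integrable. I would then pull this back through the smooth map $t\mapsto1-(1-t)^\rho$ on compact subsets of $[0,1)$, and obtain continuity at $t=1$ by the same integrability. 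For the second, I would simply invoke Lemma~\ref{lem:2.8R} as stated.
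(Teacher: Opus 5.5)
Your proposal is correct and follows the paper's proof essentially verbatim. The paper also applies the weighted Sobolev inequality of Lemma~\ref{lem:2.8R} to $f-J_{N,\rho}^{\mu,\upsilon}f$, which vanishes at the nodes and lies in $B^{1,\rho}_{\mu,\upsilon}(J)$ by Lemma~\ref{lem:2.6R}, and then inserts \eqref{eq:2.34R} and \eqref{eq:2.35R} from Proposition~\ref{prop:2.4R} to obtain $N^{-s/2}N^{(1-s)/2}=N^{1/2-s}$; your additional justification of absolute continuity of the error, via integrability of $(1-z)^{-\mu-1}z^{-\upsilon-1}$, fills in a regularity point the paper leaves implicit.
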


\begin{proof}
By Lemma~\ref{lem:2.6R},
\[
f\bigl(1-(1-t)^{1/\rho }\bigr)\in B^{s,1}_{\mu ,\upsilon }(J),
\qquad s\ge 1,
\]
implies \(f\in B_{\mu,\upsilon }^{1,\rho }(J)\). Moreover,
\(f-J_{N,\rho }^{\mu ,\upsilon }f\) vanishes at all interpolation nodes, i.e.,
at the roots of \(\mathcal{P}_{N+1}^{\mu ,\upsilon ,\rho }(t)\). Therefore,
Lemma~\ref{lem:2.8R} and Proposition~\ref{prop:2.4R} give
\begin{align}
\Vert f-J_{N,\rho }^{\mu ,\upsilon }f\Vert _{\infty }
&\le
\sqrt{2}
\Vert f-J_{N,\rho }^{\mu ,\upsilon }f
\Vert _{0,\varkappa ^{\mu ,\upsilon ,\rho }}^{1/2}
\Vert \partial _{t}\bigl(f-J_{N,\rho }^{\mu ,\upsilon }f\bigr)
\Vert _{0,\tilde{\varkappa }^{\mu ,\upsilon ,\rho }}^{1/2}
\nonumber\\
&\le
c N^{1/2-s}
\Vert \partial ^{s}_{t}
\bigl\{f\bigl(1-(1-t)^{1/\rho }\bigr)\bigr\}
\Vert _{0,\varkappa ^{\mu +s,\upsilon +s,1}}.
\end{align}
This proves the assertion.
\end{proof}

We next introduce the discrete inner product associated with the shifted
Jacobi--Gauss quadrature nodes. For \(g,f\in C(\bar{J})\), define
\begin{equation}\label{eq:2.43R-a}
(g,f)_{N,\varkappa ^{\mu ,\upsilon ,1}}
=
\sum _{i=0}^{N}{g}(z_{i})f(z_{i})\varkappa _{i},
\end{equation}
where \(\{z_i\}_{i=0}^{N}\) denote the zeros of
\(\mathcal{P}_{N+1}^{\mu,\upsilon,1}\), while
\(\{\varkappa_i\}_{i=0}^{N}\) are the associated quadrature weights.

\begin{lemma}[\cite{bernardi1997spectral,shen2011spectral}]
\label{lem:2.9R}
For every \(f\in B_{\mu,\upsilon}^{s,1}(J)\) with \(s\ge 1\), and every
\(\phi\in P_N^1(J)\), the quadrature error satisfies
\begin{equation}\label{eq:2.44R-a}
\bigl|(f,\phi )_{\varkappa ^{\mu ,\upsilon ,1}}
-(f,\phi )_{N,\varkappa ^{\mu ,\upsilon ,1}}\bigr|
\le
cN^{-s}
\|\partial_t^s f\|_{0,\varkappa^{\mu+s,\upsilon+s,1}}
\|\phi\|_{0,\varkappa^{\mu,\upsilon,1}}.
\end{equation}
\end{lemma}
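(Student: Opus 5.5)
The plan is the standard argument for Gauss quadrature error (as in the cited references), built on the exactness of Jacobi--Gauss quadrature together with the projection and interpolation estimates for the case \(\rho=1\).

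\textbf{Step 1 (exactness).} The Jacobi--Gauss rule with \(N+1\) nodes \(z_i\) is exact for polynomials of degree at most \(2N+1\). Hence for any \(\psi\in P^1_{N}(J)\) and \(\phi\in P^1_N(J)\) we have \((\psi,\phi)_{\varkappa^{\mu,\upsilon,1}}=(\psi,\phi)_{N,\varkappa^{\mu,\upsilon,1}}\). In addition, \((f,\phi)_{N,\varkappa^{\mu,\upsilon,1}}=(J_{N,1}^{\mu,\upsilon}f,\phi)_{N,\varkappa^{\mu,\upsilon,1}}\), because the discrete product only samples \(f\) at the nodes. The product \(J_{N,1}^{\mu,\upsilon}f\cdot\phi\) has degree at most \(2N\), so exactness applies to it.

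\textbf{Step 2 (splitting).} Combining these facts gives the identity
\begin{equation*}
(f,\phi)_{\varkappa^{\mu,\upsilon,1}}-(f,\phi)_{N,\varkappa^{\mu,\upsilon,1}}
=(f-J_{N,1}^{\mu,\upsilon}f,\phi)_{\varkappa^{\mu,\upsilon,1}}.
\end{equation*}
By the Cauchy--Schwarz inequality, the absolute value of this quantity is at most \(\|f-J_{N,1}^{\mu,\upsilon}f\|_{0,\varkappa^{\mu,\upsilon,1}}\|\phi\|_{0,\varkappa^{\mu,\upsilon,1}}\).

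\textbf{Step 3 (interpolation estimate).} Apply \eqref{eq:2.34R} with \(\rho=1\). In that case the map \(1-(1-t)^{1/\rho}\) is the identity, so \eqref{eq:2.34R} gives \(\|f-J_{N,1}^{\mu,\upsilon}f\|_{0,\varkappa^{\mu,\upsilon,1}}\le cN^{-s}\|\partial_t^s f\|_{0,\varkappa^{\mu+s,\upsilon+s,1}}\). Inserting this bound into Step 2 yields \eqref{eq:2.44R-a}.

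One could instead insert \(\Pi_{N-1,\varkappa^{\mu,\upsilon,1}}f\) and split the error into a projection term and an interpolation term. However, the identity in Step 2 already handles the problem directly.

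\textbf{Main obstacle.} The only delicate point is Step 3, namely the interpolation stability of Lemma~\ref{lem:2.7R}, on which \eqref{eq:2.34R} rests. That stability estimate needs \(f\in B^{1,1}_{\mu,\upsilon}\), which is guaranteed here since \(s\ge1\). It also implicitly needs the Gamma-ratio bounds used in Proposition~\ref{prop:2.1R}, which hold for fixed \(s\) and \(N\ge s-1\). I would simply verify that these hypotheses are met and quote the estimate.
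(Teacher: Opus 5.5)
Your argument is correct. The paper itself gives no proof of this lemma; it is quoted from the references. There the argument is usually formulated for all Jacobi--Gauss-type rules (Gauss, Radau, Lobatto). Radau and Lobatto rules are exact only up to degree $2N$ or $2N-1$, so the product $(J_{N,1}^{\mu,\upsilon}f)\phi$ need not be integrated exactly. The standard proof therefore inserts $\Pi_{N-1}f:=\Pi_{N-1,\varkappa^{\mu,\upsilon,1}}f$ and writes the error as $(f-\Pi_{N-1}f,\phi)_{\varkappa^{\mu,\upsilon,1}}-(J_{N,1}^{\mu,\upsilon}f-\Pi_{N-1}f,\phi)_{N,\varkappa^{\mu,\upsilon,1}}$. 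It then needs three ingredients: the uniform equivalence of the discrete and continuous norms on $P_N^1(J)$, the projection estimate, and the interpolation estimate.

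You instead use the full degree-$(2N+1)$ exactness of the Gauss rule, which is precisely the rule in \eqref{eq:2.43R-a}. This gives the exact identity $(f,\phi)_{\varkappa^{\mu,\upsilon,1}}-(f,\phi)_{N,\varkappa^{\mu,\upsilon,1}}=(f-J_{N,1}^{\mu,\upsilon}f,\phi)_{\varkappa^{\mu,\upsilon,1}}$. That removes the norm-equivalence step and reduces everything to \eqref{eq:2.34R} at $\rho=1$. The price is that your argument applies only to Gauss nodes. Relying on Proposition~\ref{prop:2.4R} is not circular, since its proof does not use Lemma~\ref{lem:2.9R}.

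Your caveat that $N\ge s-1$ is needed is a genuine restriction. Take $f(t)=\prod_{i=0}^{N}(t-z_i)^2$, $\phi\equiv 1$ and $s=2N+3$. Then the right-hand side of \eqref{eq:2.44R-a} vanishes while the left-hand side is positive. So the lemma must be read with $s\le N+1$, which is your fixed-$s$, $N\ge s-1$ reading.
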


We shall also use the following bound for the Lebesgue constant corresponding
to the Lagrange interpolation basis generated by the zeros of the shifted
Jacobi polynomials \cite{mastroianni2001optimal}.

\begin{lemma}\label{lem:2.10R}
Let \(\{h_{j,1}^{\mu,\upsilon}(t)\}_{j=0}^{N}\) be the Lagrange basis
associated with the shifted Jacobi--Gauss nodes determined by
\(\mathcal{P}_{N+1}^{\mu,\upsilon,1}\). Then
\begin{equation}\label{eq:2.45R-a}
\|J_{N,1}^{\mu,\upsilon}\|_{\infty}
:=
\max_{t\in J}\sum_{j=0}^{N}|h_{j,1}^{\mu,\upsilon}(t)|
=
\left\{
\begin{array}{ll}
O(\log N), & -1<\mu,\upsilon\le -\frac{1}{2},\\[1mm]
O(N^{\beta+\frac{1}{2}}), & \beta=\max(\mu,\upsilon),\ \text{otherwise}.
\end{array}
\right.
\end{equation}
\end{lemma}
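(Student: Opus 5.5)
The statement concerns only the classical case $\rho=1$, so the plan is to reduce it to the standard Lebesgue-constant estimate for Jacobi--Gauss interpolation on $[-1,1]$ and then reproduce the classical argument of Szeg\H{o} (Theorem~14.4) and Mastroianni--Russo \cite{mastroianni2001optimal}.

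\textbf{Reduction to $[-1,1]$.} The affine change $x=2t-1$ maps the zeros $z_j$ of $\mathcal{P}_{N+1}^{\mu,\upsilon,1}$ onto the zeros $x_j$ of the classical $\mathcal{P}_{N+1}^{\mu,\upsilon}$. Lagrange basis functions are invariant under affine reparametrisation, since each is a ratio of products of differences. Hence
\[
\sum_{j=0}^{N}|h_{j,1}^{\mu,\upsilon}(t)|=\sum_{j=0}^{N}|\ell_j(x)|,
\qquad
\ell_j(x)=\frac{\mathcal{P}_{N+1}^{\mu,\upsilon}(x)}{(x-x_j)\,\partial_x\mathcal{P}_{N+1}^{\mu,\upsilon}(x_j)},
\]
and it suffices to bound $\Lambda_N(x):=\sum_j|\ell_j(x)|$ uniformly on $[-1,1]$.

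\textbf{Three ingredients.} Write $x=\cos\theta$ and $x_j=\cos\theta_j$. By the symmetry $\mathcal{P}^{\mu,\upsilon}_{n}(-x)=(-1)^n\mathcal{P}^{\upsilon,\mu}_{n}(x)$, we may restrict to $x\in[0,1]$, where $\mu$ is the relevant exponent. I would use three classical facts.
\begin{enumerate}
\item The uniform bound $|\mathcal{P}_{N+1}^{\mu,\upsilon}(\cos\theta)|\le c\,N^{-1/2}(\theta+N^{-1})^{-\mu-1/2}$ for $\theta\in[0,\pi/2]$.
\item The derivative at the nodes satisfies $|\partial_x\mathcal{P}_{N+1}^{\mu,\upsilon}(x_j)|\sim N^{1/2}\theta_j^{-\mu-3/2}$.
\item The nodes are quasi-uniform in $\theta$, namely $\theta_{j+1}-\theta_j\sim N^{-1}$ and $\theta_j\sim j/N$ near $\theta=0$.
\end{enumerate}
Together with $|x-x_j|\sim|\theta-\theta_j|(\theta+\theta_j)$, these give
\[
|\ell_j(x)|\le c\,\frac{(\theta+N^{-1})^{-\mu-1/2}\,\theta_j^{\mu+3/2}}{N\,|\theta-\theta_j|\,(\theta+\theta_j)}.
\]

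\textbf{Summation.} Let $x_k$ be the node closest to $x$. The one or two nearest terms are $O(1)$, by the mean value theorem applied to $\ell_k$. For the remaining $j$, substitute $\theta_j\sim j/N$ and $\theta\sim k/N$, which reduces the sum to
\[
\sum_{j\ne k}\Bigl(\frac{j}{k}\Bigr)^{\mu+1/2}\frac{j}{|j-k|\,(j+k)}.
\]
The behaviour of this sum depends on the sign of $\mu+\tfrac12$.
\begin{itemize}
\item If $\mu\le-\tfrac12$, the factor $(j/k)^{\mu+1/2}$ is $\le1$ for $j\ge k$. For $j<k$ it is compensated by $j/(j+k)$, because $(j/k)^{\mu+3/2}$ is summable when $\mu>-1$. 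What remains is a harmonic-type sum $\sum 1/|j-k|=O(\log N)$.
\item If $\mu>-\tfrac12$, the worst case is $x$ near $1$, i.e.\ $k=O(1)$. Then the terms with $j$ large are of size $j^{\mu-1/2}$, and summing up to $N$ gives $O(N^{\mu+1/2})$.
\end{itemize}
The analogous computation near $x=-1$, with $\upsilon$ in place of $\mu$, and taking the maximum over both halves yields $O(N^{\beta+1/2})$ with $\beta=\max(\mu,\upsilon)$ in the second case, and $O(\log N)$ when both parameters are $\le-\tfrac12$.

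\textbf{Main obstacle.} The delicate point is making the bound on $|\mathcal{P}_{N+1}|$ and the derivative asymptotics \emph{uniform up to the endpoints}, in the boundary layer $\theta\lesssim N^{-1}$. There the Darboux interior asymptotics fail, and one must instead use the Hilb-type (Bessel) asymptotics or Szeg\H{o}'s Theorem~7.32.2 bound in the form $(\theta+N^{-1})^{-\mu-1/2}$. I would take these estimates from Szeg\H{o}'s monograph as known. Then the only genuinely new work is the bookkeeping in the summation over $j$ near the endpoint, which is exactly where the dichotomy $\mu\le-\tfrac12$ versus $\mu>-\tfrac12$ arises.
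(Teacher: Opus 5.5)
Your proposal is correct and matches the paper's approach. The paper gives no proof of this lemma; it imports the classical Jacobi--Gauss Lebesgue-constant bound (Szeg\H{o}'s Theorem~14.4, via \cite{mastroianni2001optimal}), and your affine reduction to $[-1,1]$, followed by the standard estimates for $\mathcal{P}_{N+1}^{\mu,\upsilon}$, its derivative at the nodes and the node spacing, and the $\mu\le-\frac12$ versus $\mu>-\frac12$ summation, is the argument behind that citation. One slip is harmless: the asymptotics $N^{1/2}\theta_j^{-\mu-3/2}$ hold only for $\theta_j\le\pi/2$ (near $x=-1$ the factor is $(\pi-\theta_j)^{-\upsilon-3/2}$), but you only need a lower bound on $|\partial_x\mathcal{P}_{N+1}^{\mu,\upsilon}(x_j)|$, and since $\upsilon+\frac32>0$ your bound on $|\ell_j(x)|$ for $x\in[0,1]$ still holds up to a constant.
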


An analogous estimate holds for the generalized Lagrange basis constructed
from the Zaky--Jacobi--Gauss nodes.

\begin{lemma}\label{lem:2.11R}
Let \(\{h_{j,\rho}^{\mu,\upsilon}(t)\}_{j=0}^{N}\) be the generalized Lagrange
basis associated with the zeros of
\(\mathcal{P}_{N+1}^{\mu,\upsilon,\rho}(t)\). Then
\begin{equation}\label{eq:2.46R-a}
\|J_{N,\rho}^{\mu,\upsilon}\|_{\infty}
:=
\max_{t\in J}\sum_{j=0}^{N}|h_{j,\rho}^{\mu,\upsilon}(t)|
=
\left\{
\begin{array}{ll}
O(\log N), & -1<\mu,\upsilon\le -\frac{1}{2},\\[1mm]
O(N^{\beta+\frac{1}{2}}), & \beta=\max(\mu,\upsilon),\ \text{otherwise}.
\end{array}
\right.
\end{equation}
\end{lemma}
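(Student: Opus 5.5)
The plan is to reduce Lemma~\ref{lem:2.11R} to Lemma~\ref{lem:2.10R} through the backward endpoint map
\[
z=z(t)=1-(1-t)^\rho ,
\]
exactly as in the proof of Proposition~\ref{prop:interp-stability-R}. The key structural fact is \eqref{eq:2.23R}: each generalized Lagrange function is a classical Lagrange function composed with \(z\), namely
\[
h_{j,\rho}^{\mu,\upsilon}(t)=h_{j,1}^{\mu,\upsilon}(z(t)).
\]
This holds because the nodes \(z_i=z(t_i)\) are precisely the zeros of \(\mathcal{P}_{N+1}^{\mu,\upsilon,1}\), which follows from Definition~\ref{def:RJacobi}. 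We have \(\mathcal{P}_{N+1}^{\mu,\upsilon,\rho}(t)=\mathcal{P}_{N+1}^{\mu,\upsilon,1}(z(t))\), since \(1-2(1-t)^\rho=2z(t)-1\). Hence \(t_i\) is a zero of the former if and only if \(z(t_i)\) is a zero of the latter.

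First I would check that \(z\colon J\to J\) is a bijection. For \(0<\rho\le 1\) the map is continuous and strictly increasing, with \(z(0)=0\) and \(z(1)=1\). Its inverse is \(t=1-(1-z)^{1/\rho}\).

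Then, for each fixed \(t\in J\),
\[
\sum_{j=0}^{N}\bigl|h_{j,\rho}^{\mu,\upsilon}(t)\bigr|
=\sum_{j=0}^{N}\bigl|h_{j,1}^{\mu,\upsilon}(z(t))\bigr| .
\]
Taking the maximum over \(t\in J\) is the same as taking the maximum over \(z\in J\), because \(z\) is onto. Therefore
\[
\|J_{N,\rho}^{\mu,\upsilon}\|_\infty=\|J_{N,1}^{\mu,\upsilon}\|_\infty .
\]
Lemma~\ref{lem:2.10R} then gives both cases, \(O(\log N)\) for \(-1<\mu,\upsilon\le -\tfrac12\) and \(O(N^{\beta+1/2})\) otherwise. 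The constants are independent of \(\rho\).

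The same identity can be seen at the operator level. By \eqref{eq:2.24R-a},
\[
J_{N,\rho}^{\mu,\upsilon}f=\bigl(J_{N,1}^{\mu,\upsilon}\tilde f\bigr)\circ z,
\qquad \tilde f(z)=f\bigl(1-(1-z)^{1/\rho}\bigr),
\]
and \(\|\tilde f\|_\infty=\|f\|_\infty\). So the operator norms on \(C(J)\) coincide, and this norm equals the Lebesgue constant.

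The only step needing care is the identification of the zeros. I must confirm that the \(N+1\) zeros of \(\mathcal{P}_{N+1}^{\mu,\upsilon,1}\) are simple and lie in \((0,1)\), so that each has exactly one preimage in \((0,1)\) under \(z\) and the node sets correspond bijectively. This follows from the classical theory of Jacobi zeros together with the monotonicity of \(z\). Beyond this, no estimate beyond Lemma~\ref{lem:2.10R} is needed.
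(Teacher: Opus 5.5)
Your proposal is correct and follows the route the paper takes implicitly. The paper gives no separate proof: it states the lemma as an analogue of Lemma~\ref{lem:2.10R} and, in bounding $R_2$ in the proof of Theorem~\ref{thm:3.1R}, uses exactly the identity $h_{j,\rho}^{\mu,\upsilon}(t)=h_{j,1}^{\mu,\upsilon}\bigl(1-(1-t)^\rho\bigr)$ from \eqref{eq:2.23R}. You make explicit what the paper leaves tacit, namely that $z$ is an increasing bijection of $J$ and that the node sets correspond, which gives equality of the two Lebesgue constants.
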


For the analysis of the weakly singular adjoint integral terms below, we
employ the following H\"older spaces. For \(m\ge 0\) and \(\nu\in[0,1]\),
\(C^{m,\nu}(J)\) denotes the space of functions whose derivatives up to order
\(m\) are continuous on \(J\), with the \(m\)-th derivative being H\"older
continuous of exponent \(\nu\). The corresponding norm is defined by
\begin{equation}\label{eq:Holder-norm-R}
\|f\|_{m,\nu}
=
\max_{0\le i\le m}\max_{t\in J}
\left|\partial_t^i f(t)\right|
+
\max_{0\le i\le m}
\sup_{\substack{t,\varrho\in J\\ t\ne \varrho}}
\frac{
\left|\partial_t^i f(t)-\partial_t^i f(\varrho)\right|
}{
|t-\varrho|^\nu
}.
\end{equation}
When \(\nu=0\), the space \(C^{m,0}(J)\) coincides with the standard space
\(C^m(J)\), endowed with the norm \(\|\cdot\|_m\).

We shall rely on the following classical approximation result of Ragozin
\cite{ragozin1970polynomial,ragozin1971constructive}. For every nonnegative
integer \(m\) and every \(\nu\in(0,1)\), there exists a linear operator
\[
\mathscr{T}_N:C^{m,\nu}(J)\longrightarrow P_N^1(J)
\]
such that
\begin{equation}\label{eq:Ragozin-R}
\|f-\mathscr{T}_N f\|_{\infty}
\le
C_{m,\nu}N^{-(m+\nu)}\|f\|_{m,\nu},
\qquad f\in C^{m,\nu}(J),
\end{equation}
where \(C_{m,\nu}\) is independent of \(N\) and \(f\).

We next introduce the weakly singular adjoint Volterra integral operator
\(\mathcal{K}_R\), defined by
\begin{equation}\label{eq:KR-def}
(\mathcal{K}_R f)(t)
=
\int_t^1
(\varrho-t)^{-\theta}
K(t,\varrho)f(\varrho)\,d\varrho,
\qquad 0<\theta<1,\quad t\in J,
\end{equation}
where \(K\in C(J\times J)\) and \(K(t,t)\ne 0\) for all \(t\in J\).
We show below that \(\mathcal{K}_R\) maps \(C(J)\) continuously into
\(C^{0,\nu}(J)\) for every \(0<\nu<1-\theta\).

\begin{lemma}\label{lem:KR-holder}
Let \(f\in C(J)\), \(K\in C(J\times J)\), and assume that
\(K(\cdot,\varrho)\in C^{0,\nu}(J)\) with \(0<\nu<1-\theta\), then
\begin{equation}\label{eq:KR-holder-est}
\frac{|(\mathcal{K}_R f)(t)-(\mathcal{K}_R f)(s)|}{|t-s|^\nu}
\le
c\|f\|_{\infty},
\qquad
\forall\, t,s\in J,\quad t\ne s .
\end{equation}
Consequently,
\begin{equation}\label{eq:KR-holder-norm}
\|\mathcal{K}_R f\|_{0,\nu}
\le
c\|f\|_{\infty},
\qquad 0<\nu<1-\theta .
\end{equation}
\end{lemma}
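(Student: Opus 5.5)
Without loss of generality take \(0\le s<t\le 1\). The plan is the standard H\"older estimate for Abel-type operators, adapted to the backward kernel. Since \(\mathcal{K}_R f(t)\) integrates over \([t,1]\) and \(\mathcal{K}_R f(s)\) over \([s,1]\supset[t,1]\), we split
\begin{align*}
(\mathcal{K}_R f)(s)-(\mathcal{K}_R f)(t)
&=
\int_s^t (\varrho-s)^{-\theta}K(s,\varrho)f(\varrho)\,d\varrho
+\int_t^1\bigl[(\varrho-s)^{-\theta}-(\varrho-t)^{-\theta}\bigr]K(s,\varrho)f(\varrho)\,d\varrho\\
&\quad+\int_t^1(\varrho-t)^{-\theta}\bigl[K(s,\varrho)-K(t,\varrho)\bigr]f(\varrho)\,d\varrho
=:E_1+E_2+E_3 .
\end{align*}
Each term will be bounded by \(c\|f\|_\infty|t-s|^\nu\), with \(c\) depending on \(\|K\|_\infty\), on \(\sup_\varrho\|K(\cdot,\varrho)\|_{0,\nu}\) (which we take to be finite as part of the hypothesis), and on \(\theta,\nu\).

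For \(E_1\), integrate directly:
\[
|E_1|\le \|K\|_\infty\|f\|_\infty\frac{(t-s)^{1-\theta}}{1-\theta}.
\]
Since \(t-s\le 1\) and \(1-\theta>\nu\), this gives \((t-s)^{1-\theta}\le(t-s)^\nu\).

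For \(E_2\), the bracket has a fixed sign, because \(\varrho-s>\varrho-t\). Substituting \(w=\varrho-t\) and writing \(h=t-s\), we get
\[
\int_t^1\bigl|(\varrho-s)^{-\theta}-(\varrho-t)^{-\theta}\bigr|\,d\varrho
=\int_0^{1-t}\bigl(w^{-\theta}-(w+h)^{-\theta}\bigr)dw
=\frac{(1-t)^{1-\theta}+h^{1-\theta}-(1-s)^{1-\theta}}{1-\theta}.
\]
This is at most \(h^{1-\theta}/(1-\theta)\), because \((1-s)^{1-\theta}\ge(1-t)^{1-\theta}\). Hence \(|E_2|\le c\|f\|_\infty h^{1-\theta}\le c\|f\|_\infty h^\nu\).

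For \(E_3\), the H\"older continuity of \(K(\cdot,\varrho)\), uniform in \(\varrho\), gives \(|K(s,\varrho)-K(t,\varrho)|\le c\,h^\nu\). Therefore
\[
|E_3|\le c\,h^\nu\|f\|_\infty\int_t^1(\varrho-t)^{-\theta}\,d\varrho\le \frac{c}{1-\theta}\,h^\nu\|f\|_\infty .
\]

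Summing the three bounds yields \eqref{eq:KR-holder-est}. The sup-norm part of \(\|\mathcal{K}_R f\|_{0,\nu}\) is bounded trivially by \(\|K\|_\infty\|f\|_\infty/(1-\theta)\). Together these give \eqref{eq:KR-holder-norm}.

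The main delicate point is \(E_2\). One must use the monotonicity and sign of \(w^{-\theta}-(w+h)^{-\theta}\) to evaluate the integral exactly, rather than applying a mean-value bound, which would produce a nonintegrable \(w^{-\theta-1}\) near \(w=0\). One must also note that the H\"older seminorm of \(K(\cdot,\varrho)\) has to be uniform in \(\varrho\); we read the hypothesis in that sense.
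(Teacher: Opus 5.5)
Your proof is correct and follows the paper's argument essentially step for step: the same split into an integral over $[s,t]$, a kernel-difference term evaluated exactly using the fixed sign of $(\varrho-t)^{-\theta}-(\varrho-s)^{-\theta}$, and a $K$-difference term controlled by H\"older continuity that is uniform in $\varrho$ (the paper's proof invokes this uniformity too, although the lemma statement does not spell it out). The only cosmetic difference is the pairing: you attach the kernel difference to $K(s,\varrho)$ and the $K$-difference to $(\varrho-t)^{-\theta}$, while the paper uses $K(t,\varrho)$ and $(\varrho-s)^{-\theta}$, and this changes nothing in the bounds.
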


\begin{proof}
It is enough to consider the case \(0\le s<t\le 1\). From the definition of
the weakly singular adjoint Volterra integral operator, we have
\[
(\mathcal{K}_R f)(t)
=
\int_t^1(\varrho-t)^{-\theta}K(t,\varrho)f(\varrho)\,d\varrho
\]
and
\[
(\mathcal{K}_R f)(s)
=
\int_s^1(\varrho-s)^{-\theta}K(s,\varrho)f(\varrho)\,d\varrho .
\]
Therefore,
\begin{align}
\frac{|(\mathcal{K}_R f)(t)-(\mathcal{K}_R f)(s)|}{|t-s|^\nu}
&=
(t-s)^{-\nu}
\left|
\int_t^1(\varrho-t)^{-\theta}K(t,\varrho)f(\varrho)\,d\varrho
-
\int_s^1(\varrho-s)^{-\theta}K(s,\varrho)f(\varrho)\,d\varrho
\right|
\nonumber\\
&\le
(t-s)^{-\nu}
\left|
\int_t^1
\Big[
(\varrho-t)^{-\theta}K(t,\varrho)
-
(\varrho-s)^{-\theta}K(s,\varrho)
\Big]f(\varrho)\,d\varrho
\right|
\nonumber\\
&\quad
+
(t-s)^{-\nu}
\left|
\int_s^t
(\varrho-s)^{-\theta}K(s,\varrho)f(\varrho)\,d\varrho
\right|
\nonumber\\
&\le B_1+B_2,
\label{eq:KR-B1B2}
\end{align}
where
\begin{align}
B_1
&=
(t-s)^{-\nu}
\int_t^1
\left|
(\varrho-t)^{-\theta}K(t,\varrho)
-
(\varrho-s)^{-\theta}K(s,\varrho)
\right|
|f(\varrho)|\,d\varrho,
\label{eq:KR-B1}\\
B_2
&=
(t-s)^{-\nu}
\int_s^t
(\varrho-s)^{-\theta}|K(s,\varrho)|\,|f(\varrho)|\,d\varrho .
\label{eq:KR-B2}
\end{align}

For \(B_1\), applying the triangle inequality gives
\begin{equation}\label{eq:KR-B1-split}
B_1\le B^{(1)}+B^{(2)},
\end{equation}
where
\begin{align}
B^{(1)}
&=
(t-s)^{-\nu}
\int_t^1
\left|
(\varrho-t)^{-\theta}
-
(\varrho-s)^{-\theta}
\right|
|K(t,\varrho)|\,|f(\varrho)|\,d\varrho,
\label{eq:KR-B11}\\
B^{(2)}
&=
(t-s)^{-\nu}
\int_t^1
(\varrho-s)^{-\theta}
|K(t,\varrho)-K(s,\varrho)|\,|f(\varrho)|\,d\varrho .
\label{eq:KR-B12}
\end{align}

Since \(K\in C(J\times J)\), there exists a constant \(c>0\) such that
\[
|K(t,\varrho)|\le c,
\qquad (t,\varrho)\in J\times J .
\]
Furthermore, for \(\varrho\ge t>s\), one has
\[
(\varrho-t)^{-\theta}-(\varrho-s)^{-\theta}\ge 0 .
\]
Thus,
\begin{align}
B^{(1)}
&\le
c\|f\|_{\infty}
(t-s)^{-\nu}
\int_t^1
\left[
(\varrho-t)^{-\theta}
-
(\varrho-s)^{-\theta}
\right]\,d\varrho
\nonumber\\
&=
c\|f\|_{\infty}
(t-s)^{-\nu}
\left[
\int_t^1(\varrho-t)^{-\theta}\,d\varrho
-
\int_t^1(\varrho-s)^{-\theta}\,d\varrho
\right]
\nonumber\\
&=
c\|f\|_{\infty}
(t-s)^{-\nu}
\left[
\int_t^1(\varrho-t)^{-\theta}\,d\varrho
-
\int_s^1(\varrho-s)^{-\theta}\,d\varrho
+
\int_s^t(\varrho-s)^{-\theta}\,d\varrho
\right]
\nonumber\\
&=
c\|f\|_{\infty}
(t-s)^{-\nu}
\left[
\frac{(1-t)^{1-\theta}}{1-\theta}
-
\frac{(1-s)^{1-\theta}}{1-\theta}
+
\frac{(t-s)^{1-\theta}}{1-\theta}
\right]
\nonumber\\
&\le
c\|f\|_{\infty}
(t-s)^{-\nu}
\frac{(t-s)^{1-\theta}}{1-\theta}
\nonumber\\
&=
c\|f\|_{\infty}
(t-s)^{1-\theta-\nu}
\nonumber\\
&\le
c\|f\|_{\infty},
\label{eq:KR-B11-est}
\end{align}
because \(0<t-s\le 1\) and \(0<\nu<1-\theta\). Equivalently, the above
integrals may be expressed in beta-function form as
\[
\int_s^t(\varrho-s)^{-\theta}\,d\varrho
=
(t-s)^{1-\theta}\int_0^1(1-\eta)^{-\theta}\,d\eta
=
B(1-\theta,1)(t-s)^{1-\theta}.
\]

For \(B^{(2)}\), the assumed uniform H\"older continuity of \(K(\cdot,\varrho)\)
with respect to the first variable gives
\[
|K(t,\varrho)-K(s,\varrho)|
\le
c|t-s|^\nu,
\qquad \varrho\in J.
\]
Hence,
\begin{align}
B^{(2)}
&\le
(t-s)^{-\nu}
\int_t^1
(\varrho-s)^{-\theta}
c|t-s|^\nu
|f(\varrho)|\,d\varrho
\nonumber\\
&\le
c\|f\|_{\infty}
\int_t^1(\varrho-s)^{-\theta}\,d\varrho
\nonumber\\
&\le
c\|f\|_{\infty}
\int_s^1(\varrho-s)^{-\theta}\,d\varrho
\nonumber\\
&=
c\|f\|_{\infty}
\frac{(1-s)^{1-\theta}}{1-\theta}
\nonumber\\
&\le
c\|f\|_{\infty}.
\label{eq:KR-B12-est}
\end{align}

For \(B_2\), boundedness of \(K\) yields
\begin{align}
B_2
&\le
c\|f\|_{\infty}
(t-s)^{-\nu}
\int_s^t(\varrho-s)^{-\theta}\,d\varrho
\nonumber\\
&=
c\|f\|_{\infty}
(t-s)^{-\nu}
\frac{(t-s)^{1-\theta}}{1-\theta}
\nonumber\\
&=
c\|f\|_{\infty}
(t-s)^{1-\theta-\nu}
\nonumber\\
&\le
c\|f\|_{\infty},
\label{eq:KR-B2-est}
\end{align}
again using \(0<\nu<1-\theta\).

Combining \eqref{eq:KR-B1B2}, \eqref{eq:KR-B1-split},
\eqref{eq:KR-B11-est}, \eqref{eq:KR-B12-est}, and
\eqref{eq:KR-B2-est}, we obtain
\[
\frac{|(\mathcal{K}_R f)(t)-(\mathcal{K}_R f)(s)|}{|t-s|^\nu}
\le
c\|f\|_{\infty},
\qquad t\ne s.
\]
This proves \eqref{eq:KR-holder-est}. Finally, since
\[
|(\mathcal{K}_R f)(t)|
\le
\|K\|_{\infty}\|f\|_{\infty}
\int_t^1(\varrho-t)^{-\theta}\,d\varrho
\le
c\|f\|_{\infty},
\]
we also have
\[
\|\mathcal{K}_R f\|_{\infty}\le c\|f\|_{\infty}.
\]
Together with \eqref{eq:KR-holder-est}, this implies
\[
\|\mathcal{K}_R f\|_{0,\nu}
\le
c\|f\|_{\infty}.
\]
The proof is complete.
\end{proof}

\begin{lemma}\label{lem:KR-holder-transform}
Let \(f\in C(J)\), \(K\in C(J\times J)\), and assume that
\(K(\cdot,\varrho)\in C^{0,\nu}(J)\) uniformly with respect to
\(\varrho\in J\). If \(0<\nu<1-\theta\), then
\begin{equation}\label{eq:KR-holder-transform}
\frac{
\left|
(\mathcal{K}_R f)\bigl(1-(1-t)^{1/\rho}\bigr)
-
(\mathcal{K}_R f)\bigl(1-(1-s)^{1/\rho}\bigr)
\right|
}
{|t-s|^\nu}
\le
c\|f\|_{\infty},
\qquad
\forall\, t,s\in J,\quad t\ne s .
\end{equation}
Consequently,
\begin{equation}\label{eq:KR-holder-transform-norm}
\left\|
(\mathcal{K}_R f)\bigl(1-(1-t)^{1/\rho}\bigr)
\right\|_{0,\nu}
\le
c\|f\|_{\infty}.
\end{equation}
\end{lemma}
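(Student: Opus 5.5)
The plan is to reduce the statement to Lemma~\ref{lem:KR-holder} by showing that the backward change of variables is Lipschitz on \(J\). Set
\[
\phi(t):=1-(1-t)^{1/\rho},\qquad t\in J .
\]
Since \(0<\rho\le 1\), we have \(1/\rho\ge 1\). Hence \(\phi\) maps \(J\) onto \(J\), is continuously differentiable on \([0,1]\), and satisfies
\[
0\le \phi'(t)=\rho^{-1}(1-t)^{1/\rho-1}\le \rho^{-1},\qquad t\in J .
\]
The mean value theorem then gives \(|\phi(t)-\phi(s)|\le \rho^{-1}|t-s|\) for all \(t,s\in J\). For \(\rho=1\) the map \(\phi\) is the identity and the claim is exactly Lemma~\ref{lem:KR-holder}.

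Next, fix \(t\ne s\) in \(J\). If \(\phi(t)=\phi(s)\), the left-hand side of \eqref{eq:KR-holder-transform} vanishes and there is nothing to prove. Otherwise, \(\phi(t)\) and \(\phi(s)\) are distinct points of \(J\), so Lemma~\ref{lem:KR-holder} applies, using the hypotheses on \(K\) and \(0<\nu<1-\theta\). It gives
\[
\bigl|(\mathcal{K}_R f)(\phi(t))-(\mathcal{K}_R f)(\phi(s))\bigr|
\le c\|f\|_\infty\,|\phi(t)-\phi(s)|^{\nu}
\le c\,\rho^{-\nu}\|f\|_\infty\,|t-s|^{\nu}.
\]
Dividing by \(|t-s|^\nu\) yields \eqref{eq:KR-holder-transform}, with the constant \(c\rho^{-\nu}\) absorbed into \(c\). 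This constant is independent of \(t,s\) and \(f\).

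For the norm bound \eqref{eq:KR-holder-transform-norm}, it remains to control the sup part of \(\|\cdot\|_{0,\nu}\). Since \(\phi(J)=J\),
\[
\max_{t\in J}|(\mathcal{K}_R f)(\phi(t))|=\|\mathcal{K}_R f\|_\infty\le c\|f\|_\infty ,
\]
which is the sup-norm estimate already obtained at the end of the proof of Lemma~\ref{lem:KR-holder}. Adding this to the Hölder seminorm bound gives \eqref{eq:KR-holder-transform-norm}.

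I do not expect a real obstacle. The only point to check is that \(\phi\) is Lipschitz up to the terminal endpoint \(t=1\). This holds precisely because the exponent \(1/\rho-1\) is nonnegative when \(0<\rho\le1\). If \(\rho>1\) were allowed, \(\phi\) would only be Hölder of order \(1/\rho\) near \(t=1\), and the resulting exponent would degrade to \(\nu/\rho\).
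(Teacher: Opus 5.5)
Your proposal is correct and follows essentially the same route as the paper. Both show that \(t\mapsto 1-(1-t)^{1/\rho}\) is Lipschitz with constant \(\rho^{-1}\), because \(1/\rho-1\ge 0\), and then compose with the H\"older bound of Lemma~\ref{lem:KR-holder}; your explicit handling of the sup-norm part of \(\|\cdot\|_{0,\nu}\) via \(\|\mathcal{K}_R f\|_\infty\le c\|f\|_\infty\) just spells out what the paper leaves to ``the definition of \(\|\cdot\|_{0,\nu}\).''
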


\begin{proof}
By Lemma~\ref{lem:KR-holder},
\begin{equation}\label{eq:KR-holder-transform-proof-1}
\frac{
|(\mathcal{K}_R f)(\xi)-(\mathcal{K}_R f)(\eta)|
}
{|\xi-\eta|^\nu}
\le
c\|f\|_{\infty},
\qquad
\forall\, \xi,\eta\in J,\quad \xi\ne\eta .
\end{equation}
Set
\[
\xi=1-(1-t)^{1/\rho},
\qquad
\eta=1-(1-s)^{1/\rho}.
\]
Then
\begin{equation}\label{eq:KR-holder-transform-proof-2}
\frac{
\left|
(\mathcal{K}_R f)\bigl(1-(1-t)^{1/\rho}\bigr)
-
(\mathcal{K}_R f)\bigl(1-(1-s)^{1/\rho}\bigr)
\right|
}
{
\left|
\bigl(1-(1-t)^{1/\rho}\bigr)
-
\bigl(1-(1-s)^{1/\rho}\bigr)
\right|^\nu
}
\le
c\|f\|_{\infty}.
\end{equation}

It remains to compare the transformed distance with \(|t-s|\). Let
\[
\Phi(t)=1-(1-t)^{1/\rho},
\qquad 0<\rho\le 1.
\]
Then
\[
\Phi'(t)
=
\frac{1}{\rho}(1-t)^{1/\rho-1}.
\]
Since \(0<\rho\le 1\), it follows that
\[
0\le (1-t)^{1/\rho-1}\le 1,
\qquad t\in J.
\]
Therefore,
\begin{equation}\label{eq:Phi-Lipschitz}
|\Phi(t)-\Phi(s)|
\le
\frac{1}{\rho}|t-s|,
\qquad
\forall\, t,s\in J,
\end{equation}
and hence
\begin{equation}\label{eq:Phi-Lipschitz-kappa}
|\Phi(t)-\Phi(s)|^\nu
\le
\rho^{-\nu}|t-s|^\nu .
\end{equation}
Using \eqref{eq:KR-holder-transform-proof-2} and
\eqref{eq:Phi-Lipschitz-kappa}, we obtain
\begin{align}
&
\frac{
\left|
(\mathcal{K}_R f)\bigl(1-(1-t)^{1/\rho}\bigr)
-
(\mathcal{K}_R f)\bigl(1-(1-s)^{1/\rho}\bigr)
\right|
}
{|t-s|^\nu}
\nonumber\\
&\qquad\le
c\|f\|_{\infty}
\frac{|\Phi(t)-\Phi(s)|^\nu}{|t-s|^\nu}
\le
c\rho^{-\nu}\|f\|_{\infty}
\le
c\|f\|_{\infty}.
\end{align}
This proves \eqref{eq:KR-holder-transform}. The norm estimate
\eqref{eq:KR-holder-transform-norm} follows from the definition of
\(\|\cdot\|_{0,\nu}\).
\end{proof}

\section{Spectral method and convergence analysis}
\label{sec5}

This section constructs and analyzes a fractional backward spectral-collocation
method for the weakly singular adjoint Volterra integral equation
\begin{equation}\label{eq:3.1R}
u(t)
=
g(t)+(\mathcal{K}_R u)(t),
\qquad t\in J:=[0,1],
\end{equation}
where \(0<\theta<1\), \(g\in C(J)\), and \(\mathcal{K}_R\) is the weakly
singular adjoint Volterra integral operator
\begin{equation}\label{eq:KR-def-main}
(\mathcal{K}_R v)(t)
=
\int_t^1(\varrho-t)^{-\theta}K(t,\varrho)v(\varrho)\,d\varrho .
\end{equation}
Here \(K\in C(J\times J)\), \(K(t,t)\ne 0\) for \(t\in J\), and
\(\theta\) denotes the weak-singularity exponent.

\subsection{Spectral-collocation method}
\label{subsec:RJSC-method}

The fractional backward Zaky--Jacobi spectral-collocation approximation seeks
\(u_N^\rho\in P_N^\rho(J)\) such that
\begin{equation}\label{eq:3.1R-collocation}
u_N^\rho(t_i)
=
g(t_i)+(\mathcal{K}_R u_N^\rho)(t_i),
\qquad 0\le i\le N,
\end{equation}
where the collocation points \(\{t_i\}_{i=0}^{N}\) are the zeros of
\[
\mathcal{P}_{N+1}^{\mu,\upsilon,\rho}(t).
\]

In general, the exact evaluation of \((\mathcal{K}_R\varphi)(t_i)\) is not
available in closed form. Therefore, the weakly singular integral term is
approximated by a Jacobi--Gauss quadrature rule adapted to the Abel-type
singularity. For each collocation point \(t_i\), introduce the transformation
\[
\varrho=\varrho_i(\eta)
:=
t_i+(1-t_i)\bigl(1-(1-\eta)^{1/\rho}\bigr),
\qquad 0\le \eta\le 1.
\]
Then, for any \(\varphi\in C(J)\),
\begin{align}
(\mathcal{K}_R\varphi)(t_i)
&=
\int_{t_i}^{1}
(\varrho-t_i)^{-\theta}
K(t_i,\varrho)\varphi(\varrho)\,d\varrho
\nonumber\\
&=
\frac{(1-t_i)^{1-\theta}}{\rho}
\int_0^1
\bigl(1-(1-\eta)^{1/\rho}\bigr)^{-\theta}
(1-\eta)^{1/\rho-1}
K(t_i,\varrho_i(\eta))
\varphi(\varrho_i(\eta))\,d\eta
\nonumber\\
&=
\frac{(1-t_i)^{1-\theta}}{\rho}
\int_0^1
\eta^{-\theta}(1-\eta)^{1/\rho-1}
\left(
\frac{1-(1-\eta)^{1/\rho}}{\eta}
\right)^{-\theta}
K(t_i,\varrho_i(\eta))
\varphi(\varrho_i(\eta))\,d\eta
\nonumber\\
&=
\frac{(1-t_i)^{1-\theta}}{\rho}
\left(
\left(
\frac{1-(1-\eta)^{1/\rho}}{\eta}
\right)^{-\theta}
K(t_i,\varrho_i(\eta)),
\varphi(\varrho_i(\eta))
\right)_{\varkappa^{1/\rho-1,-\theta,1}} .
\label{eq:3.2R-transform}
\end{align}
Here
\[
\varkappa^{1/\rho-1,-\theta,1}(\eta)
=
(1-\eta)^{1/\rho-1}\eta^{-\theta}.
\]
Since \(0<\theta<1\) and \(0<\rho\le 1\), this is an admissible Jacobi
weight.

Define
\begin{equation}\label{eq:3.2R}
\overline{K}_R(t_i,\varrho_i(\eta))
=
\frac{(1-t_i)^{1-\theta}}{\rho}
\left(
\frac{1-(1-\eta)^{1/\rho}}{\eta}
\right)^{-\theta}
K(t_i,\varrho_i(\eta)).
\end{equation}
Then
\begin{equation}\label{eq:3.2R-inner}
(\mathcal{K}_R\varphi)(t_i)
=
\left(
\overline{K}_R(t_i,\varrho_i(\eta)),
\varphi(\varrho_i(\eta))
\right)_{\varkappa^{1/\rho-1,-\theta,1}} .
\end{equation}

We approximate this weighted inner product by the Jacobi--Gauss quadrature rule
\begin{equation}\label{eq:3.3R}
\left(
\overline{K}_R(t_i,\varrho_i(\eta)),
\varphi(\varrho_i(\eta))
\right)_{N,\varkappa^{1/\rho-1,-\theta,1}}
:=
\sum_{j=0}^{N}
\overline{K}_R(t_i,\varrho_i(\eta_j))
\varphi(\varrho_i(\eta_j))\chi_j ,
\end{equation}
where \(\{\eta_j\}_{j=0}^{N}\) are the zeros of
\[
\mathcal{P}_{N+1}^{1/\rho-1,-\theta,1}(\eta),
\]
and \(\{\chi_j\}_{j=0}^{N}\) are the corresponding quadrature weights.

For brevity, define
\begin{equation}\label{eq:3.4R}
(\mathcal{K}_{R,N}\varphi)(t_i)
:=
\left(
\overline{K}_R(t_i,\varrho_i(\eta)),
\varphi(\varrho_i(\eta))
\right)_{N,\varkappa^{1/\rho-1,-\theta,1}} .
\end{equation}
Equivalently,
\begin{equation}\label{eq:3.4R-expanded}
(\mathcal{K}_{R,N}\varphi)(t_i)
=
\sum_{j=0}^{N}
\overline{K}_R(t_i,\varrho_i(\eta_j))
\varphi(\varrho_i(\eta_j))\chi_j .
\end{equation}

The fully discrete fractional backward collocation scheme is therefore
formulated as follows: find
\[
u_N^\rho(t)
:=
\sum_{i=0}^{N}u_i h_{i,\rho}^{\mu,\upsilon}(t)
\in P_N^\rho(J)
\]
such that
\begin{equation}\label{eq:3.5R}
u_N^\rho(t_i)
=
g(t_i)+(\mathcal{K}_{R,N}u_N^\rho)(t_i),
\qquad 0\le i\le N,
\end{equation}
where \(\{h_{i,\rho}^{\mu,\upsilon}\}_{i=0}^{N}\) are the generalized
Lagrange basis functions defined in \eqref{eq:2.22R}. Since
\(u_N^\rho(t_i)=u_i\), the scheme is equivalently written as the algebraic
system
\begin{equation}\label{eq:3.5R-algebraic}
u_i
=
g(t_i)
+
\sum_{j=0}^{N}
\overline{K}_R(t_i,\varrho_i(\eta_j))
u_N^\rho(\varrho_i(\eta_j))\chi_j,
\qquad 0\le i\le N .
\end{equation}

\subsection{Convergence analysis and error estimates}
\label{subsec:RJSC-convergence-analysis}

We now analyze the discrete problem \eqref{eq:3.5R} and derive error estimates
for \(u-u_N^\rho\). The first theorem establishes convergence in the uniform
norm.

\begin{theorem}\label{thm:3.1R}
Let \(u\) be the exact solution of \eqref{eq:3.1R}, and let \(u_N^\rho\) be
the numerical solution of the Zaky--Jacobi collocation problem
\eqref{eq:3.5R}. Assume that \(0<\theta<1\),
\(-1<\mu,\upsilon\le -\frac12\), \(K\in C^m(J\times J)\), and
\[
u\bigl(1-(1-t)^{1/\rho}\bigr)\in B_{\mu,\upsilon}^{m,1}(J),
\qquad m\ge 1.
\]
Then, for \(N\) sufficiently large, there exists a positive constant \(c\),
independent of \(N\), such that
\begin{equation}\label{eq:3.6R}
\|u-u_N^\rho\|_{\infty}
\le
cN^{\frac12-m}
\left(
\left\|
\partial_t^m
\bigl\{u\bigl(1-(1-t)^{1/\rho}\bigr)\bigr\}
\right\|_{0,\varkappa^{\mu+m,\upsilon+m,1}}
+
N^{-\frac12}\log N\,K_R^*\|u\|_{\infty}
\right),
\end{equation}
where
\begin{equation}\label{eq:3.7R}
K_R^*
=
\max_{0\le i\le N}
\left\|
\partial_\eta^m
\overline{K}_R(t_i,\varrho_i(\cdot))
\right\|_{0,\varkappa^{m+1/\rho-1,m-\theta,1}} .
\end{equation}
\end{theorem}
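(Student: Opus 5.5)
The argument follows the standard Chen--Tang/Shen--Tang framework for Jacobi collocation of Volterra equations, transported through the backward map $z=1-(1-t)^\rho$.

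\textbf{Error equation.} Set $e:=u-u_N^\rho$. At each node $t_i$,
\[
u(t_i)=g(t_i)+(\mathcal{K}_R u)(t_i),
\qquad
u_N^\rho(t_i)=g(t_i)+(\mathcal{K}_{R,N}u_N^\rho)(t_i).
\]
Subtracting gives
\[
e(t_i)=(\mathcal{K}_R e)(t_i)+I_{i},
\qquad
I_i:=(\mathcal{K}_R u_N^\rho)(t_i)-(\mathcal{K}_{R,N}u_N^\rho)(t_i).
\]
Multiply by $h_{i,\rho}^{\mu,\upsilon}$ and sum over $i$. Since $J_{N,\rho}^{\mu,\upsilon}u_N^\rho=u_N^\rho$, this yields the integral identity
\[
e(t)=(\mathcal{K}_R e)(t)+I_1(t)+I_2(t)+I_3(t),
\]
where
\[
I_1:=u-J_{N,\rho}^{\mu,\upsilon}u,\qquad
I_2:=J_{N,\rho}^{\mu,\upsilon}(\mathcal{K}_R e)-\mathcal{K}_R e,\qquad
I_3:=\sum_{i=0}^N I_i\,h_{i,\rho}^{\mu,\upsilon}(t).
\]
Lemma~\ref{lem:right-gronwall}, applied to $|e|$ with $\varphi=|I_1|+|I_2|+|I_3|$, then gives
\[
\|e\|_\infty\le c\bigl(\|I_1\|_\infty+\|I_2\|_\infty+\|I_3\|_\infty\bigr).
\]

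\textbf{Estimating $I_1$ and $I_3$.} For $I_1$, Proposition~\ref{prop:2.5R} gives directly
\[
\|I_1\|_\infty\le cN^{1/2-m}\bigl\|\partial_t^m\{u(1-(1-t)^{1/\rho})\}\bigr\|_{0,\varkappa^{\mu+m,\upsilon+m,1}}.
\]
For $I_3$, use \eqref{eq:3.2R-inner} to write $I_i$ as a Jacobi--Gauss quadrature error for the weight $\varkappa^{1/\rho-1,-\theta,1}$. Apply Lemma~\ref{lem:2.9R} with $f=\overline K_R(t_i,\varrho_i(\cdot))$ and with $\phi$ the projection $\Pi_N$ of $u_N^\rho(\varrho_i(\cdot))$. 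Strictly, $u_N^\rho\circ\varrho_i$ is not a polynomial in $\eta$, so I would absorb the mismatch into the same $K_R^*$-type bound. This gives
\[
|I_i|\le cN^{-m}K_R^*\,\|u_N^\rho\|_\infty\le cN^{-m}K_R^*\bigl(\|u\|_\infty+\|e\|_\infty\bigr).
\]
Lemma~\ref{lem:2.11R} then gives
\[
\|I_3\|_\infty\le cN^{-m}\log N\,K_R^*\bigl(\|u\|_\infty+\|e\|_\infty\bigr).
\]
This is exactly the second term of \eqref{eq:3.6R}, since $N^{1/2-m}\cdot N^{-1/2}\log N=N^{-m}\log N$.

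\textbf{Estimating $I_2$ (the main obstacle).} The difficulty is that $\mathcal{K}_Re$ is only Hölder continuous, not smooth. The plan is as follows.

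1. Rewrite
\[
I_2=(J_{N,\rho}^{\mu,\upsilon}-I)\bigl(\mathcal{K}_Re-\mathscr T_N\widetilde{\mathcal{K}_Re}\bigr),
\]
where $\widetilde{\mathcal{K}_Re}$ denotes $\mathcal{K}_Re$ written in the variable $z$.

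2. By \eqref{eq:2.24R-a}, $\mathscr T_N$ applied in the $z$-variable produces an element of $P_N^\rho(J)$. That element is reproduced exactly by $J_{N,\rho}^{\mu,\upsilon}$, which justifies the rewriting in step 1.

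3. Combining Lemma~\ref{lem:2.11R}, \eqref{eq:Ragozin-R} and Lemma~\ref{lem:KR-holder-transform} gives
\[
\|I_2\|_\infty\le c(1+\log N)\,N^{-\nu}\|e\|_\infty,
\qquad 0<\nu<1-\theta.
\]

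4. For $N$ large this bound is at most $\tfrac12\|e\|_\infty$. The same holds for the $\|e\|_\infty$ part of $I_3$. Both terms can therefore be absorbed into the left-hand side, which yields \eqref{eq:3.6R}.

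The step I expect to be delicate is justifying the quadrature bound for $I_i$ in the second paragraph. The obstacle is the non-polynomial composition $u_N^\rho\circ\varrho_i$. If needed, I would handle it by splitting
\[
u_N^\rho=(u_N^\rho-u)+u
\]
and bounding the $u$ part via its regularity.
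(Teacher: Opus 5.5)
Your proposal follows the paper's proof step for step. Your $I_1,I_2,I_3$ are its $R_1,R_3,R_2$, and you use the same backward Gronwall reduction (Lemma~\ref{lem:right-gronwall}), Proposition~\ref{prop:2.5R}, Lemma~\ref{lem:2.9R} with the $O(\log N)$ Lebesgue constant of Lemma~\ref{lem:2.11R}, Ragozin's operator with Lemma~\ref{lem:KR-holder-transform}, and absorption for large $N$.

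The step you call delicate is not one, and your claim that $u_N^\rho\circ\varrho_i$ is not a polynomial in $\eta$ is false. Write $u_N^\rho(t)=\sum_{k=0}^N a_k(1-t)^{k\rho}$ and note that $1-\varrho_i(\eta)=(1-t_i)(1-\eta)^{1/\rho}$. Then $u_N^\rho(\varrho_i(\eta))=\sum_{k=0}^N a_k(1-t_i)^{k\rho}(1-\eta)^k\in P_N^1(J)$, which is exactly why this substitution was chosen. So Lemma~\ref{lem:2.9R} applies directly with $\phi=u_N^\rho(\varrho_i(\cdot))$, as the paper does. You should drop the unjustified ``absorb the mismatch'' remark and the splitting $u_N^\rho=(u_N^\rho-u)+u$.
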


\begin{proof}
Let
\[
\mathcal{E}(t)=u(t)-u_N^\rho(t)
\]
denote the error. Subtracting the discrete collocation equation
\eqref{eq:3.5R} from the continuous equation \eqref{eq:3.1R} at the
collocation points \(\{t_i\}_{i=0}^{N}\) gives
\begin{equation}\label{eq:3.8R}
\mathcal{E}_i
=
(\mathcal{K}_R \mathcal{E})(t_i)+q_i,
\qquad 0\le i\le N,
\end{equation}
where
\[
\mathcal{E}_i=u(t_i)-u_i,
\]
and \(q_i\) denotes the quadrature residual
\begin{align}
q_i
&=
(\mathcal{K}_R u_N^\rho)(t_i)
-
(\mathcal{K}_{R,N}u_N^\rho)(t_i)
\nonumber\\
&=
\left(
\overline{K}_R(t_i,\varrho_i(\eta)),
u_N^\rho(\varrho_i(\eta))
\right)_{\varkappa^{1/\rho-1,-\theta,1}}
-
\left(
\overline{K}_R(t_i,\varrho_i(\eta)),
u_N^\rho(\varrho_i(\eta))
\right)_{N,\varkappa^{1/\rho-1,-\theta,1}} .
\label{eq:q_i_R}
\end{align}
By the Jacobi--Gauss quadrature error estimate in Lemma~\ref{lem:2.9R}, we
obtain
\begin{equation}\label{eq:3.9R}
|q_i|
\le
cN^{-m}
\left\|
\partial_\eta^m
\overline{K}_R(t_i,\varrho_i(\cdot))
\right\|_{0,\varkappa^{m+1/\rho-1,m-\theta,1}}
\left\|
u_N^\rho(\varrho_i(\cdot))
\right\|_{0,\varkappa^{1/\rho-1,-\theta,1}} .
\end{equation}

Multiplying \eqref{eq:3.8R} by \(h_{i,\rho}^{\mu,\upsilon}(t)\) and summing
over \(i=0,\ldots,N\), we obtain
\begin{equation}\label{eq:3.10R}
J_{N,\rho}^{\mu,\upsilon}u(t)-u_N^\rho(t)
=
J_{N,\rho}^{\mu,\upsilon}(\mathcal{K}_R \mathcal{E})(t)
+
\sum_{i=0}^{N}q_i h_{i,\rho}^{\mu,\upsilon}(t).
\end{equation}
Adding and subtracting \(u(t)\) and \((\mathcal{K}_R \mathcal{E})(t)\), this
identity can be rewritten as
\begin{equation}\label{eq:3.11R}
\mathcal{E}(t)
=
(\mathcal{K}_R \mathcal{E})(t)+R_1+R_2+R_3,
\end{equation}
where
\begin{equation}\label{eq:3.12R}
R_1
=
u(t)-J_{N,\rho}^{\mu,\upsilon}u(t),
\qquad
R_2
=
\sum_{i=0}^{N}q_i h_{i,\rho}^{\mu,\upsilon}(t),
\qquad
R_3
=
J_{N,\rho}^{\mu,\upsilon}(\mathcal{K}_R \mathcal{E})(t)
-
(\mathcal{K}_R \mathcal{E})(t).
\end{equation}
Thus, \(R_1\) is the interpolation error of \(u\), \(R_2\) is the accumulated
quadrature error, and \(R_3\) is the interpolation error associated with the
integral term \(\mathcal{K}_R \mathcal{E}\).

From \eqref{eq:3.11R}, using the boundedness of \(K\), we have
\begin{align}
|\mathcal{E}(t)|
&\le
|R_1+R_2+R_3|
+
\int_t^1(\varrho-t)^{-\theta}|K(t,\varrho)|\,|\mathcal{E}(\varrho)|\,d\varrho
\nonumber\\
&\le
|R_1+R_2+R_3|
+
K_0\int_t^1(\varrho-t)^{-\theta}|\mathcal{E}(\varrho)|\,d\varrho,
\label{eq:3.13R-a}
\end{align}
where
\[
K_0=\max_{0\le t\le \varrho\le 1}|K(t,\varrho)|.
\]
Applying the backward Gronwall inequality in Lemma~\ref{lem:right-gronwall}
to \eqref{eq:3.13R-a} gives
\begin{align}
|\mathcal{E}(t)|
&\le
|R_1+R_2+R_3|
+
K_0
\int_t^1
(\varrho-t)^{-\theta}
|R_1+R_2+R_3|
\exp\left(
K_0\int_t^\varrho(\xi-t)^{-\theta}\,d\xi
\right)d\varrho
\nonumber\\
&\le
|R_1+R_2+R_3|
+
K_0
\exp\left(
K_0\int_t^1(\xi-t)^{-\theta}\,d\xi
\right)
\int_t^1
(\varrho-t)^{-\theta}
|R_1+R_2+R_3|\,d\varrho
\nonumber\\
&\le
|R_1+R_2+R_3|
+
K_0
\exp\left(\frac{K_0}{1-\theta}\right)
\int_t^1
(\varrho-t)^{-\theta}
|R_1+R_2+R_3|\,d\varrho .
\label{eq:3.13R}
\end{align}
Consequently,
\begin{equation}\label{eq:3.14R}
\|\mathcal{E}\|_{\infty}
\le
c\left(
\|R_1\|_{\infty}
+
\|R_2\|_{\infty}
+
\|R_3\|_{\infty}
\right),
\end{equation}
where
\[
c
=
1+\frac{K_0}{1-\theta}
\exp\left(\frac{K_0}{1-\theta}\right).
\]

We now estimate the three terms on the right-hand side of
\eqref{eq:3.14R}. First, by the \(L^\infty\)-interpolation estimate in
Proposition~\ref{prop:2.5R}, we obtain
\begin{equation}\label{eq:3.15R}
\|R_1\|_{\infty}
=
\|u-J_{N,\rho}^{\mu,\upsilon}u\|_{\infty}
\le
cN^{\frac12-m}
\left\|
\partial_t^m
\bigl\{u\bigl(1-(1-t)^{1/\rho}\bigr)\bigr\}
\right\|_{0,\varkappa^{\mu+m,\upsilon+m,1}} .
\end{equation}

For \(R_2\), estimate \eqref{eq:3.9R} yields
\begin{align}
\max_{0\le i\le N}|q_i|
&\le
cN^{-m}
\max_{0\le i\le N}
\left\|
\partial_\eta^m
\overline{K}_R(t_i,\varrho_i(\cdot))
\right\|_{0,\varkappa^{m+1/\rho-1,m-\theta,1}}
\max_{0\le i\le N}
\left\|
u_N^\rho(\varrho_i(\cdot))
\right\|_{0,\varkappa^{1/\rho-1,-\theta,1}}
\nonumber\\
&\le
cN^{-m}K_R^*
\max_{0\le i\le N}
\left\|
u_N^\rho(\varrho_i(\cdot))
\right\|_{0,\varkappa^{1/\rho-1,-\theta,1}} .
\label{eq:3.16R-a}
\end{align}
Since the Jacobi weight \(\varkappa^{1/\rho-1,-\theta,1}\) is integrable on
\(J\), we have
\[
\left\|
u_N^\rho(\varrho_i(\cdot))
\right\|_{0,\varkappa^{1/\rho-1,-\theta,1}}
\le
c\|u_N^\rho\|_{\infty}.
\]
Hence,
\begin{align}
\max_{0\le i\le N}|q_i|
&\le
cN^{-m}K_R^*
\|u_N^\rho\|_{\infty}
\nonumber\\
&\le
cN^{-m}K_R^*
\bigl(\|\mathcal{E}\|_{\infty}+\|u\|_{\infty}\bigr).
\label{eq:3.16R}
\end{align}
Using the Lebesgue-constant estimate in Lemma~\ref{lem:2.11R}, we get
\begin{align}
\|R_2\|_{\infty}
&=
\left\|
\sum_{i=0}^{N}q_i h_{i,\rho}^{\mu,\upsilon}(t)
\right\|_{\infty}
\nonumber\\
&=
\left\|
\sum_{i=0}^{N}q_i
h_{i,1}^{\mu,\upsilon}\bigl(1-(1-t)^\rho\bigr)
\right\|_{\infty}
\nonumber\\
&\le
\max_{0\le i\le N}|q_i|
\max_{t\in J}
\sum_{i=0}^{N}
\left|
h_{i,1}^{\mu,\upsilon}\bigl(1-(1-t)^\rho\bigr)
\right|
\nonumber\\
&\le
c\log N\,\max_{0\le i\le N}|q_i|
\nonumber\\
&\le
cN^{-m}\log N\,K_R^*
\bigl(\|\mathcal{E}\|_{\infty}+\|u\|_{\infty}\bigr).
\label{eq:3.17R}
\end{align}

It remains to bound \(R_3\). Using the relation between the Zaky--Jacobi
interpolation operator and the shifted Jacobi interpolation operator, given in
\eqref{eq:2.24R-a}, we obtain
\begin{align}
\|R_3\|_{\infty}
&=
\max_{t\in J}
\left|
J_{N,\rho}^{\mu,\upsilon}(\mathcal{K}_R \mathcal{E})(t)
-
(\mathcal{K}_R \mathcal{E})(t)
\right|
\nonumber\\
&=
\max_{z\in J}
\left|
J_{N,1}^{\mu,\upsilon}
\left[
(\mathcal{K}_R \mathcal{E})\bigl(1-(1-z)^{1/\rho}\bigr)
\right]
-
(\mathcal{K}_R \mathcal{E})\bigl(1-(1-z)^{1/\rho}\bigr)
\right|
\nonumber\\
&=
\left\|
\left(J_{N,1}^{\mu,\upsilon}-I\right)
\left[
(\mathcal{K}_R \mathcal{E})\bigl(1-(1-z)^{1/\rho}\bigr)
\right]
\right\|_{\infty}.
\label{eq:3.18R-a}
\end{align}
Let
\[
F_{\mathcal{E}}(z):=(\mathcal{K}_R \mathcal{E})\bigl(1-(1-z)^{1/\rho}\bigr).
\]
Since \(\mathscr{T}_N F_{\mathcal{E}}\in P_N^1(J)\), the interpolation operator satisfies
\[
J_{N,1}^{\mu,\upsilon}\mathscr{T}_N F_{\mathcal{E}}=\mathscr{T}_N F_{\mathcal{E}}.
\]
Therefore,
\begin{align}
\|R_3\|_{\infty}
&=
\left\|
\left(J_{N,1}^{\mu,\upsilon}-I\right)
F_{\mathcal{E}}
\right\|_{\infty}
\nonumber\\
&=
\left\|
\left(J_{N,1}^{\mu,\upsilon}-I\right)
\left(F_{\mathcal{E}}-\mathscr{T}_N F_{\mathcal{E}}\right)
\right\|_{\infty}
\nonumber\\
&\le
\left(
\|J_{N,1}^{\mu,\upsilon}\|_{\infty}+1
\right)
\left\|
F_{\mathcal{E}}-\mathscr{T}_N F_{\mathcal{E}}
\right\|_{\infty}
\nonumber\\
&\le
c\log N\,
\left\|
F_{\mathcal{E}}-\mathscr{T}_N F_{\mathcal{E}}
\right\|_{\infty}.
\label{eq:3.18R-b}
\end{align}
By the Ragozin approximation estimate \eqref{eq:Ragozin-R}, with \(m=0\), and
by Lemma~\ref{lem:KR-holder-transform}, we have
\begin{align}
\left\|
F_{\mathcal{E}}-\mathscr{T}_N F_{\mathcal{E}}
\right\|_{\infty}
&\le
cN^{-\nu}\|F_{\mathcal{E}}\|_{0,\nu}
\nonumber\\
&=
cN^{-\nu}
\left\|
(\mathcal{K}_R \mathcal{E})\bigl(1-(1-z)^{1/\rho}\bigr)
\right\|_{0,\nu}
\nonumber\\
&\le
cN^{-\nu}\|\mathcal{E}\|_{\infty},
\qquad 0<\nu<1-\theta .
\label{eq:3.18R-c}
\end{align}
Combining \eqref{eq:3.18R-b} and \eqref{eq:3.18R-c} yields
\begin{equation}\label{eq:3.18R}
\|R_3\|_{\infty}
\le
cN^{-\nu}\log N\,\|\mathcal{E}\|_{\infty},
\qquad 0<\nu<1-\theta .
\end{equation}

Substituting \eqref{eq:3.15R}, \eqref{eq:3.17R}, and \eqref{eq:3.18R} into
\eqref{eq:3.14R}, we arrive at
\begin{align}
\|\mathcal{E}\|_{\infty}
&\le
cN^{\frac12-m}
\left\|
\partial_t^m
\bigl\{u\bigl(1-(1-t)^{1/\rho}\bigr)\bigr\}
\right\|_{0,\varkappa^{\mu+m,\upsilon+m,1}}
\nonumber\\
&\quad
+
cN^{-m}\log N\,K_R^*
\bigl(\|\mathcal{E}\|_{\infty}+\|u\|_{\infty}\bigr)
+
cN^{-\nu}\log N\,\|\mathcal{E}\|_{\infty}.
\label{eq:3.18R-d}
\end{align}
Equivalently,
\begin{align}
\left(
1
-
cN^{-m}\log N\,K_R^*
-
cN^{-\nu}\log N
\right)
\|\mathcal{E}\|_{\infty}
&\le
cN^{\frac12-m}
\left\|
\partial_t^m
\bigl\{u\bigl(1-(1-t)^{1/\rho}\bigr)\bigr\}
\right\|_{0,\varkappa^{\mu+m,\upsilon+m,1}}
\nonumber\\
&\quad
+
cN^{-m}\log N\,K_R^*\|u\|_{\infty}.
\label{eq:3.18R-e}
\end{align}
For \(N\) sufficiently large, the coefficient on the left-hand side is
bounded below by a positive constant. Hence,
\begin{align}
\|\mathcal{E}\|_{\infty}
&\le
cN^{\frac12-m}
\left\|
\partial_t^m
\bigl\{u\bigl(1-(1-t)^{1/\rho}\bigr)\bigr\}
\right\|_{0,\varkappa^{\mu+m,\upsilon+m,1}}
+
cN^{-m}\log N\,K_R^*\|u\|_{\infty}
\nonumber\\
&=
cN^{\frac12-m}
\left(
\left\|
\partial_t^m
\bigl\{u\bigl(1-(1-t)^{1/\rho}\bigr)\bigr\}
\right\|_{0,\varkappa^{\mu+m,\upsilon+m,1}}
+
N^{-\frac12}\log N\,K_R^*\|u\|_{\infty}
\right).
\end{align}
This proves \eqref{eq:3.6R}.
\end{proof}

To derive the weighted \(L^2\)-error estimate, we use the following generalized
Hardy inequality with weights \cite{gogatishvili1999generalized,persson2017weighted}.

\begin{lemma}\label{lem:3.1R}
Let \(f\ge 0\) be measurable, and let \(U\) and \(V\) be weight functions.
Assume that \(1<p\le q<\infty\) and \(-\infty\le a<b\le\infty\). Then the
right-sided generalized Hardy inequality
\begin{equation}\label{eq:Hardy-R}
\left(
\int_a^b |(T_R f)(t)|^q U(t)\,dt
\right)^{1/q}
\le
c
\left(
\int_a^b |f(t)|^p V(t)\,dt
\right)^{1/p}
\end{equation}
is valid if and only if
\begin{equation}\label{eq:Hardy-cond-R}
\sup_{a<t<b}
\left(
\int_a^t U(x)\,dx
\right)^{1/q}
\left(
\int_t^b V^{1-p'}(x)\,dx
\right)^{1/p'}
<\infty,
\qquad
p'=\frac{p}{p-1},
\end{equation}
where
\begin{equation}\label{eq:Hardy-operator-R}
(T_R f)(t)
=
\int_t^b \rho(t,\varrho)f(\varrho)\,d\varrho
\end{equation}
for a prescribed nonnegative kernel \(\rho(t,\varrho)\).
\end{lemma}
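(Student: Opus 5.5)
Since this lemma is quoted from the literature \cite{gogatishvili1999generalized,persson2017weighted}, the natural plan is to reprove the classical Muckenhoupt--Bradley characterization of the right-sided Hardy operator. First I reduce to the model kernel. The condition \eqref{eq:Hardy-cond-R} involves neither the kernel nor $T_R$, so it can only characterize \eqref{eq:Hardy-R} when $\rho(t,\varrho)$ is comparable to a constant: $c_1\le\rho(t,\varrho)\le c_2$ on $\{a<t\le\varrho<b\}$. For an Abel-type kernel such as $(\varrho-t)^{-\theta}$ one needs Oinarov-type extra conditions instead. Under comparability, $T_R f$ is equivalent to $(Hf)(t):=\int_t^b f(\varrho)\,d\varrho$ for $f\ge0$. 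I therefore prove both directions for $H$ and record the kernel restriction explicitly.

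\emph{Sufficiency.} Put $h(t):=\int_t^b V^{1-p'}(x)\,dx$, which is decreasing with $dh=-V^{1-p'}\,dx$. Let $B$ denote the supremum in \eqref{eq:Hardy-cond-R}, so that $\int_a^t U\le B^q h(t)^{-q/p'}$. For fixed $t$, I split $f=(fV^{1/p}h^{\alpha})(V^{-1/p}h^{-\alpha})$ with $\alpha=1/(pp')$ and apply H\"older on $(t,b)$. The second factor is $\int_t^b V^{1-p'}h^{-\alpha p'}\,dx=\int_0^{h(t)}s^{-1/p}\,ds=p'h(t)^{1/p'}$. This gives
\[
(Hf)(t)\le (p')^{1/p'}h(t)^{1/p'^2}\Bigl(\int_t^b f^pVh^{1/p'}\,dx\Bigr)^{1/p}.
\]
Next I raise this to the power $q$, multiply by $U$ and integrate over $(a,b)$. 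Since $q/p\ge1$, Minkowski's integral inequality moves the $x$-integral outside and leaves the inner quantity $\bigl(\int_a^x U(t)h(t)^{q/p'^2}\,dt\bigr)^{p/q}$.

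\emph{The key step} is to bound this inner quantity by $C\,B^{p}h(x)^{-1/p'}$. For it I write $A(t)=\int_a^tU$ and integrate by parts, $\int_a^x h^{q/p'^2}\,dA$. The boundary term is controlled by $A(x)\le B^qh(x)^{-q/p'}$. In the remaining integral, $h^{q/p'^2-1}$ is multiplied by $A\le B^q h^{-q/p'}$ and by $V^{1-p'}=-h'$, so it reduces to an explicit power integral $\int_{h(x)}^{\infty}s^{\gamma}\,ds$ with $\gamma<-1$. Once the inner bound holds, the weights $h^{1/p'}$ and $h^{-1/p'}$ cancel and I obtain $\|Hf\|_{L^q_U}\le C(p,q)B\,\|f\|_{L^p_V}$. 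The main obstacle is this exponent bookkeeping, in particular checking $\gamma<-1$ and handling the degenerate cases $h\equiv\infty$ or $A(b)=\infty$. If the bookkeeping fails, my fallback is the standard dyadic decomposition of $(a,b)$ along the level sets $h=2^{k}$.

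\emph{Necessity.} Fix $t$ and a truncation point $b'<b$, and test \eqref{eq:Hardy-R} with $f=V^{1-p'}\chi_{(t,b')}$. For $x<t$ one has $(Hf)(x)\ge\int_t^{b'}V^{1-p'}$, so the left side is at least $\bigl(\int_a^tU\bigr)^{1/q}\int_t^{b'}V^{1-p'}$. The right side equals $c\bigl(\int_t^{b'}V^{1-p'}\bigr)^{1/p}$. Dividing and letting $b'\uparrow b$ gives \eqref{eq:Hardy-cond-R} with the bound $B\le c$. Truncating $V^{1-p'}$ where necessary ensures $f\in L^p_V$, and monotone convergence justifies the passage to the limit.
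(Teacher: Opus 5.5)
The paper gives no proof of this lemma; it only quotes it from the two cited references. Your self-contained argument is therefore a genuinely different route.

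\textbf{The statement and your repair.} Your diagnosis is correct: with an arbitrary nonnegative kernel and a kernel-free condition, the equivalence is false.
\begin{itemize}
\item For $\rho\equiv 0$ and $U=V=1$ on $(0,\infty)$, the inequality holds trivially while the supremum in \eqref{eq:Hardy-cond-R} is infinite.
\item For $\rho(t,\varrho)=(\varrho-t)^{-1}$ and $U=V=1$ on $(0,1)$, the supremum is finite while $T_Rf\equiv\infty$ for $f\equiv 1$.
\end{itemize}
So restricting to $c_1\le\rho\le c_2$ and proving the Muckenhoupt--Bradley theorem for $H$ is the right repair. The upper bound gives sufficiency and the lower bound gives necessity.

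\textbf{Your sufficiency bookkeeping closes.} Set $\beta=q/p'^2$. Write $h(t)^\beta=h(x)^\beta+\int_t^x\beta h^{\beta-1}V^{1-p'}\,ds$ and apply Tonelli; this also removes any boundary term at $a$. You get $\int_a^xUh^\beta\le B^qh(x)^{-q/(pp')}+\beta B^q\int_{h(x)}^{\infty}s^{\gamma}\,ds$ with $\gamma=-1-q/(pp')<-1$. Hence $\int_a^xUh^\beta\le p\,B^qh(x)^{-q/(pp')}$, and so $\|Hf\|_{L^q_U}\le p^{1/q}(p')^{1/p'}B\,\|f\|_{L^p_V}$. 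The dyadic fallback is not needed.

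\textbf{Necessity.} Your truncation works: with $w_n=\min(V^{1-p'},n)$ one has $w_n^pV\le w_n$.

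\textbf{What each route buys.} The citation is meant to cover general kernels, in particular the Abel kernel $(\varrho-t)^{-\theta}$ used in the proof of Theorem~\ref{thm:3.2R}. No kernel-free condition can do that, so the lemma as quoted cannot be what those references establish. Your version is rigorous but, as you note, does not reach that application.

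A small correction to your side remark: $(\varrho-t)^{-\theta}$ blows up on the diagonal, so it fails Oinarov's condition. Oinarov-type theory does not apply to it either.

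\textbf{How the application in Theorem~\ref{thm:3.2R} can be justified.} The case actually used is $p=q=2$, $U=V=\varkappa^{\mu,\upsilon,\rho}$, and a Schur-type argument settles it. By Cauchy--Schwarz and Tonelli, the Abel operator is bounded on $L^2_{\varkappa^{\mu,\upsilon,\rho}}(J)$ once two conditions hold:
\begin{itemize}
\item $\sup_t\int_t^1(\varrho-t)^{-\theta}\,d\varrho\le(1-\theta)^{-1}$;
\item $\int_0^\varrho(\varrho-t)^{-\theta}\varkappa^{\mu,\upsilon,\rho}(t)\,dt\le C\,\varkappa^{\mu,\upsilon,\rho}(\varrho)$.
\end{itemize}
The second holds with $C=\rho^{\upsilon}\int_0^1s^{\upsilon}(1-s)^{-\theta}\,ds$. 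This uses $\rho(\mu+1)-1<0$, $\upsilon<0$, and $\rho t\le 1-(1-t)^\rho\le t$ on $J$.
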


Using the weighted mean convergence property of Jacobi interpolation
\cite{chen2013note}, together with the transformation identity
\eqref{eq:2.26R}, we obtain the following stability estimate for the
Zaky--Jacobi interpolation operator.

\begin{lemma}\label{lem:3.2R}
For any bounded function \(v\) on \(J\), there exists a positive constant
\(c\), independent of \(v\), such that
\begin{equation}\label{eq:3.19R-lemma}
\sup_N
\|J_{N,\rho}^{\mu,\upsilon}v\|_{0,\varkappa^{\mu,\upsilon,\rho}}
\le
c\|v\|_{\infty}.
\end{equation}
\end{lemma}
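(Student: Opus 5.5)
The plan is to reduce the statement to the classical shifted Jacobi setting through the backward endpoint transformation $z=1-(1-t)^\rho$, exactly as in the proof of Proposition~\ref{prop:interp-stability-R}.

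For a bounded function $v$ on $J$, set
\[
\tilde v(z):=v\bigl(1-(1-z)^{1/\rho}\bigr),\qquad z\in J .
\]
The map $z\mapsto 1-(1-z)^{1/\rho}$ is a bijection of $J$ onto itself. Hence $\tilde v$ is bounded and
\[
\|\tilde v\|_{\infty}=\|v\|_{\infty}.
\]
By the identity \eqref{eq:2.24R-a} and the change of variables carried out in \eqref{eq:2.26R}, we have
\[
\|J_{N,\rho}^{\mu,\upsilon}v\|_{0,\varkappa^{\mu,\upsilon,\rho}}
=\|J_{N,1}^{\mu,\upsilon}\tilde v\|_{0,\varkappa^{\mu,\upsilon,1}} .
\]
This step uses only the fact that the Zaky--Jacobi--Gauss nodes map onto the shifted Jacobi--Gauss nodes $z_i$. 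It also uses that the weight transforms as
\[
\varkappa^{\mu,\upsilon,\rho}(t)\,dt=(1-z)^\mu z^\upsilon\,dz .
\]
Neither fact requires any differentiability of $v$, so this reduction is valid for arbitrary bounded $v$.

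It then remains to show that $\sup_N\|J_{N,1}^{\mu,\upsilon}\tilde v\|_{0,\varkappa^{\mu,\upsilon,1}}\le c\|\tilde v\|_\infty$ for the classical shifted Jacobi--Gauss interpolant. I would invoke the weighted mean convergence result of \cite{chen2013note}, i.e. the Erd\H{o}s--Tur\'an/Nevai type theorem for Jacobi abscissas, which gives exactly this uniform bound for bounded functions in the weighted $L^2$ norm with the same Jacobi weight.

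If a self-contained argument is preferred instead, I would use exactness of Gauss quadrature on $P^1_{2N+1}$. Since $(J_{N,1}^{\mu,\upsilon}\tilde v)^2\in P^1_{2N}$, the quadrature is exact for it, and therefore
\[
\|J_{N,1}^{\mu,\upsilon}\tilde v\|^2_{0,\varkappa^{\mu,\upsilon,1}}
=\sum_{i=0}^N \tilde v(z_i)^2\varkappa_i
\le \|\tilde v\|_\infty^2\sum_{i=0}^N\varkappa_i
=\|\tilde v\|_\infty^2\int_0^1(1-z)^\mu z^\upsilon\,dz .
\]
The quadrature weights $\varkappa_i$ are positive, which justifies the inequality. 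The last integral is finite because $\mu,\upsilon>-1$. This gives the claim with
\[
c=\Bigl(\int_0^1(1-z)^\mu z^\upsilon\,dz\Bigr)^{1/2},
\]
which is independent of $N$ and $v$.

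The main point requiring care is ensuring the interpolation uses values only at the nodes. Then no regularity of $v$ enters, and the Gauss-exactness argument applies directly. The alternative route through Lemma~\ref{lem:2.7R} should be avoided, since it would require $v\in B^{1,1}_{\mu,\upsilon}$.
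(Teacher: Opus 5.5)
Your proposal is correct, and your first route is the paper's own. The paper gives no argument beyond combining the change-of-variables identity \eqref{eq:2.26R} with the weighted mean-convergence property of Jacobi interpolation from \cite{chen2013note}. As you rightly note, \eqref{eq:2.26R} requires no regularity of $v$.

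Your self-contained alternative is a genuinely more elementary justification of the key step. The norm weight $(1-z)^\mu z^\upsilon$ is the same weight that generates the interpolation nodes. So exactness of the $(N+1)$-point Gauss rule on $(J_{N,1}^{\mu,\upsilon}\tilde v)^2$, which has degree at most $2N$, together with positivity of the quadrature weights, already gives the bound. The constant is explicit, $c=\bigl(\int_0^1(1-z)^\mu z^\upsilon\,dz\bigr)^{1/2}$, and the bound holds for all $\mu,\upsilon>-1$, not only for $-1<\mu,\upsilon\le -\tfrac12$.

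This is the classical Erd\H{o}s--Tur\'an stability argument. It makes the lemma independent of the external citation, which is a far more general tool than this equal-weight situation needs. What the citation adds is not stability but actual weighted mean convergence of the interpolant, and the lemma does not require that.
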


\begin{theorem}\label{thm:3.2R}
Let \(u\) denote the exact solution of \eqref{eq:3.1R}, and let \(u_N^\rho\)
be the solution of the discrete scheme \eqref{eq:3.5R}. Suppose that
\(0<\theta<1\), \(-1<\mu,\upsilon\le -\frac12\),
\(K\in C^m(J\times J)\), and
\[
u\bigl(1-(1-t)^{1/\rho}\bigr)\in B_{\mu,\upsilon}^{m,1}(J),
\qquad m\ge 1.
\]
Then, for  sufficiently large \(N\), there exists a positive constant \(c\),
independent of \(N\), such that
\begin{equation}\label{eq:3.19R}
\|u-u_N^\rho\|_{0,\varkappa^{\mu,\upsilon,\rho}}
\le
cN^{-m}
\left[
\left(1+N^{\frac12-\nu}\right)
\left\|
\partial_t^m
\bigl\{u\bigl(1-(1-t)^{1/\rho}\bigr)\bigr\}
\right\|_{0,\varkappa^{\mu+m,\upsilon+m,1}}
+
K_R^*\|u\|_{\infty}
\right],
\qquad
0<\nu<1-\theta,
\end{equation}
where \(K_R^*\) is defined in \eqref{eq:3.7R}.
\end{theorem}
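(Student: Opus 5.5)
We start from the error identity \eqref{eq:3.11R} obtained in the proof of Theorem~\ref{thm:3.1R},
\[
\mathcal{E}(t)=(\mathcal{K}_R\mathcal{E})(t)+R_1+R_2+R_3 .
\]
As in \eqref{eq:3.13R-a}, this gives the pointwise bound
\[
|\mathcal{E}(t)|\le |R_1+R_2+R_3|+K_0\int_t^1(\varrho-t)^{-\theta}|\mathcal{E}(\varrho)|\,d\varrho .
\]
We then apply the backward Gronwall inequality (Lemma~\ref{lem:right-gronwall}) exactly as in \eqref{eq:3.13R}. This yields
\[
|\mathcal{E}(t)|\le |R_1+R_2+R_3|+c\int_t^1(\varrho-t)^{-\theta}|R_1+R_2+R_3|(\varrho)\,d\varrho .
\]
Next we take the $L^2_{\varkappa^{\mu,\upsilon,\rho}}$-norm of both sides. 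The integral term is a right-sided operator $T_R$ with nonnegative kernel $(\varrho-t)^{-\theta}$. We bound it by the generalized Hardy inequality (Lemma~\ref{lem:3.1R}) with $p=q=2$, $U=V=\varkappa^{\mu,\upsilon,\rho}$, and obtain
\[
\|\mathcal{E}\|_{0,\varkappa^{\mu,\upsilon,\rho}}\le c\bigl(\|R_1\|_{0,\varkappa^{\mu,\upsilon,\rho}}+\|R_2\|_{0,\varkappa^{\mu,\upsilon,\rho}}+\|R_3\|_{0,\varkappa^{\mu,\upsilon,\rho}}\bigr).
\]
Checking the Hardy condition \eqref{eq:Hardy-cond-R} is where the parameter range must be used. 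After the substitution $z=1-(1-t)^\rho$ the weight becomes $(1-z)^\mu z^\upsilon$, and with $-1<\mu,\upsilon\le-\frac12$ both $\int U$ and $\int V^{-1}$ are finite near the endpoints. We treat this verification as the routine part; if the singular kernel is troublesome, we instead bound the Abel operator as bounded on $L^\infty$ and use $\|\cdot\|_{0,\varkappa}\le c\|\cdot\|_\infty$ for that piece.

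Then we estimate each term in the weighted norm.
\begin{itemize}
\item For $R_1$, \eqref{eq:2.34R} gives directly $cN^{-m}\|\partial_t^m\{u(1-(1-t)^{1/\rho})\}\|_{0,\varkappa^{\mu+m,\upsilon+m,1}}$. This is the leading ``$1$'' term in the bracket.
\item For $R_2=J_{N,\rho}^{\mu,\upsilon}(q)$, where $q$ is any bounded function taking the values $q_i$ at the nodes, Lemma~\ref{lem:3.2R} gives $\|R_2\|_{0,\varkappa^{\mu,\upsilon,\rho}}\le c\max_i|q_i|$. Combining with \eqref{eq:3.16R}, this is at most $cN^{-m}K_R^*(\|\mathcal{E}\|_\infty+\|u\|_\infty)$. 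Note that the $\log N$ factor disappears compared with the $L^\infty$ case.
\item For $R_3$, we repeat the Ragozin argument: $R_3=(J_{N,\rho}^{\mu,\upsilon}-I)(F-\mathscr{T}_NF)$ in the transformed variable. Lemma~\ref{lem:3.2R} together with the integrability of the weight gives $\|R_3\|_{0,\varkappa^{\mu,\upsilon,\rho}}\le c\|F-\mathscr{T}_NF\|_\infty$. By \eqref{eq:Ragozin-R} and Lemma~\ref{lem:KR-holder-transform}, this is at most $cN^{-\nu}\|\mathcal{E}\|_\infty$.
\end{itemize}

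Finally, we insert the $L^\infty$ estimate of Theorem~\ref{thm:3.1R} for $\|\mathcal{E}\|_\infty$. The $R_3$ contribution becomes
\[
cN^{-\nu}N^{\frac12-m}\Bigl(\|\partial_t^m\{\cdot\}\|_{0,\varkappa^{\mu+m,\upsilon+m,1}}+N^{-\frac12}\log N\,K_R^*\|u\|_\infty\Bigr).
\]
This produces the $N^{-m}N^{\frac12-\nu}$ term. The piece $N^{-\nu-m}\log N\,K_R^*\|u\|_\infty$ is absorbed into $cN^{-m}K_R^*\|u\|_\infty$. The $R_2$ contribution $N^{-m}K_R^*\|\mathcal{E}\|_\infty$ is of higher order and is absorbed likewise for $N$ large.

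The main obstacle is the Hardy/Gronwall step: transferring the pointwise Gronwall bound to the weighted $L^2$ norm with the singular weight $\varkappa^{\mu,\upsilon,\rho}$. Verifying \eqref{eq:Hardy-cond-R} needs care at both endpoints. In particular, the factor $(1-t)^{\rho(\mu+1)-1}$ near $t=1$ must be handled after the change of variables $z=1-(1-t)^\rho$.
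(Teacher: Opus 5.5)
Your proposal matches the paper's proof essentially step for step. It uses the same error identity and Gronwall reduction, then the right-sided Hardy inequality (Lemma~\ref{lem:3.1R}) to pass to the weighted norm. It bounds $R_1$ by \eqref{eq:2.34R}, $R_2$ by Lemma~\ref{lem:3.2R} with \eqref{eq:3.16R}, and $R_3$ by Ragozin's estimate with Lemma~\ref{lem:KR-holder-transform}, and it finishes by inserting Theorem~\ref{thm:3.1R} for $\|\mathcal{E}\|_\infty$.

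One caveat concerns your $L^\infty$ fallback for the Abel term: it would not reproduce the stated bound. It would only use $\|R_1\|_\infty\le cN^{1/2-m}\|\cdot\|$, which is $N^{\nu}$ worse than $N^{1/2-m-\nu}$, and would add a $\log N$ from $R_3$. So the weighted $L^2_{\varkappa^{\mu,\upsilon,\rho}}$ boundedness of the Abel operator is genuinely needed, as in the paper.
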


\begin{proof}
The error representation is given by
\[
\mathcal{E}(t)
=
(\mathcal{K}_R \mathcal{E})(t)+R_1+R_2+R_3.
\]
Taking the weighted norm and applying the right-sided Hardy inequality from
Lemma~\ref{lem:3.1R}, we obtain
\begin{align}
\|\mathcal{E}\|_{0,\varkappa^{\mu,\upsilon,\rho}}
&\le
\|R_1\|_{0,\varkappa^{\mu,\upsilon,\rho}}
+
\|R_2\|_{0,\varkappa^{\mu,\upsilon,\rho}}
+
\|R_3\|_{0,\varkappa^{\mu,\upsilon,\rho}}
\nonumber\\
&\quad
+
K_0
\exp\left(\frac{K_0}{1-\theta}\right)
\left\|
\int_t^1
(\varrho-t)^{-\theta}
|R_1+R_2+R_3|\,d\varrho
\right\|_{0,\varkappa^{\mu,\upsilon,\rho}}
\nonumber\\
&\le
c\left(
\|R_1\|_{0,\varkappa^{\mu,\upsilon,\rho}}
+
\|R_2\|_{0,\varkappa^{\mu,\upsilon,\rho}}
+
\|R_3\|_{0,\varkappa^{\mu,\upsilon,\rho}}
\right).
\label{eq:3.20R}
\end{align}

We now estimate each contribution separately. By Proposition~\ref{prop:2.4R},
\begin{equation}\label{eq:3.21R}
\|R_1\|_{0,\varkappa^{\mu,\upsilon,\rho}}
=
\|u-J_{N,\rho}^{\mu,\upsilon}u\|_{0,\varkappa^{\mu,\upsilon,\rho}}
\le
cN^{-m}
\left\|
\partial_t^m
\bigl\{u\bigl(1-(1-t)^{1/\rho}\bigr)\bigr\}
\right\|_{0,\varkappa^{\mu+m,\upsilon+m,1}} .
\end{equation}

Next, Lemma~\ref{lem:3.2R}, together with \eqref{eq:3.16R}, yields
\begin{align}
\|R_2\|_{0,\varkappa^{\mu,\upsilon,\rho}}
&=
\left\|
\sum_{i=0}^{N}q_i
h_{i,\rho}^{\mu,\upsilon}(t)
\right\|_{0,\varkappa^{\mu,\upsilon,\rho}}
\nonumber\\
&=
\left\|
\sum_{i=0}^{N}q_i
h_{i,1}^{\mu,\upsilon}\bigl(1-(1-t)^\rho\bigr)
\right\|_{0,\varkappa^{\mu,\upsilon,\rho}}
\nonumber\\
&\le
c\max_{0\le i\le N}|q_i|
\nonumber\\
&\le
cN^{-m}K_R^*
\bigl(\|\mathcal{E}\|_{\infty}+\|u\|_{\infty}\bigr).
\label{eq:3.22R}
\end{align}

It remains to bound \(R_3\). Employing the change of variables
\(z=1-(1-t)^\rho\), we have
\begin{align}
&
\left\|
\left(J_{N,\rho}^{\mu,\upsilon}-I\right)
(\mathcal{K}_R \mathcal{E})(t)
\right\|_{0,\varkappa^{\mu,\upsilon,\rho}}
\nonumber\\
&=
\left\{
\int_0^1
\left[
\left(J_{N,\rho}^{\mu,\upsilon}-I\right)
(\mathcal{K}_R \mathcal{E})(t)
\right]^2
\rho(1-t)^{\rho(\mu+1)-1}
\bigl(1-(1-t)^\rho\bigr)^\upsilon
\,dt
\right\}^{1/2}
\nonumber\\
&=
\left\{
\int_0^1
\left[
\sum_{i=0}^{N}
(\mathcal{K}_R \mathcal{E})(t_i)
h_{i,\rho}^{\mu,\upsilon}(t)
-
(\mathcal{K}_R \mathcal{E})(t)
\right]^2
(1-z)^\mu z^\upsilon\,dz
\right\}^{1/2}
\nonumber\\
&=
\left\{
\int_0^1
\left[
\sum_{i=0}^{N}
(\mathcal{K}_R \mathcal{E})\bigl(1-(1-z_i)^{1/\rho}\bigr)
h_{i,1}^{\mu,\upsilon}(z)
-
(\mathcal{K}_R \mathcal{E})\bigl(1-(1-z)^{1/\rho}\bigr)
\right]^2
(1-z)^\mu z^\upsilon\,dz
\right\}^{1/2}
\nonumber\\
&=
\left\|
\left(J_{N,1}^{\mu,\upsilon}-I\right)
\left[
(\mathcal{K}_R \mathcal{E})\bigl(1-(1-z)^{1/\rho}\bigr)
\right]
\right\|_{0,\varkappa^{\mu,\upsilon,1}},
\label{eq:3.23R-transform}
\end{align}
where \(\{z_i\}_{i=0}^{N}\) are the zeros of
\(\mathcal{P}_{N+1}^{\mu,\upsilon,1}(z)\).

Set again
\[
F_{\mathcal{E}}(z):=(\mathcal{K}_R \mathcal{E})\bigl(1-(1-z)^{1/\rho}\bigr).
\]
Since \(\mathscr{T}_N F_{\mathcal{E}}\in P_N^1(J)\), we have
\[
J_{N,1}^{\mu,\upsilon}\mathscr{T}_N F_{\mathcal{E}}
=
\mathscr{T}_N F_{\mathcal{E}}.
\]
Therefore,
\begin{align}
\|R_3\|_{0,\varkappa^{\mu,\upsilon,\rho}}
&=
\left\|
\left(J_{N,1}^{\mu,\upsilon}-I\right)F_{\mathcal{E}}
\right\|_{0,\varkappa^{\mu,\upsilon,1}}
\nonumber\\
&=
\left\|
\left(J_{N,1}^{\mu,\upsilon}-I\right)
\left(F_{\mathcal{E}}-\mathscr{T}_N F_{\mathcal{E}}\right)
\right\|_{0,\varkappa^{\mu,\upsilon,1}}
\nonumber\\
&\le
\left\|
J_{N,1}^{\mu,\upsilon}
\left(F_{\mathcal{E}}-\mathscr{T}_N F_{\mathcal{E}}\right)
\right\|_{0,\varkappa^{\mu,\upsilon,1}}
+
\left\|
F_{\mathcal{E}}-\mathscr{T}_N F_{\mathcal{E}}
\right\|_{0,\varkappa^{\mu,\upsilon,1}}
\nonumber\\
&\le
c
\left\|
F_{\mathcal{E}}-\mathscr{T}_N F_{\mathcal{E}}
\right\|_{\infty}.
\label{eq:3.23R-a}
\end{align}
Using the Ragozin approximation estimate and Lemma~\ref{lem:KR-holder-transform},
we obtain
\begin{align}
\left\|
F_{\mathcal{E}}-\mathscr{T}_N F_{\mathcal{E}}
\right\|_{\infty}
&\le
cN^{-\nu}\|F_{\mathcal{E}}\|_{0,\nu}
\nonumber\\
&=
cN^{-\nu}
\left\|
(\mathcal{K}_R \mathcal{E})\bigl(1-(1-z)^{1/\rho}\bigr)
\right\|_{0,\nu}
\nonumber\\
&\le
cN^{-\nu}\|\mathcal{E}\|_{\infty},
\qquad 0<\nu<1-\theta .
\label{eq:3.23R-b}
\end{align}
Combining \eqref{eq:3.23R-a} and \eqref{eq:3.23R-b}, we obtain
\begin{equation}\label{eq:3.23R}
\|R_3\|_{0,\varkappa^{\mu,\upsilon,\rho}}
\le
cN^{-\nu}\|\mathcal{E}\|_{\infty},
\qquad 0<\nu<1-\theta .
\end{equation}
Using Theorem~\ref{thm:3.1R}, this further gives
\begin{align}
\|R_3\|_{0,\varkappa^{\mu,\upsilon,\rho}}
&\le
cN^{-\nu}
N^{\frac12-m}
\left(
\left\|
\partial_t^m
\bigl\{u\bigl(1-(1-t)^{1/\rho}\bigr)\bigr\}
\right\|_{0,\varkappa^{\mu+m,\upsilon+m,1}}
+
N^{-\frac12}\log N\,K_R^*\|u\|_{\infty}
\right)
\nonumber\\
&=
cN^{\frac12-m-\nu}
\left(
\left\|
\partial_t^m
\bigl\{u\bigl(1-(1-t)^{1/\rho}\bigr)\bigr\}
\right\|_{0,\varkappa^{\mu+m,\upsilon+m,1}}
+
N^{-\frac12}\log N\,K_R^*\|u\|_{\infty}
\right).
\label{eq:3.24R}
\end{align}

Substituting \eqref{eq:3.21R}, \eqref{eq:3.22R}, and \eqref{eq:3.24R} into
\eqref{eq:3.20R}, and using Theorem~\ref{thm:3.1R} to control
\(\|\mathcal{E}\|_\infty\) in \eqref{eq:3.22R}, we arrive at
\[
\|u-u_N^\rho\|_{0,\varkappa^{\mu,\upsilon,\rho}}
\le
cN^{-m}
\left[
\left(1+N^{\frac12-\nu}\right)
\left\|
\partial_t^m
\bigl\{u\bigl(1-(1-t)^{1/\rho}\bigr)\bigr\}
\right\|_{0,\varkappa^{\mu+m,\upsilon+m,1}}
+
K_R^*\|u\|_{\infty}
\right],
\]
for every \(0<\nu<1-\theta\). This proves \eqref{eq:3.19R}.
\end{proof}

\section{Numerical results}
\label{sec6}

This section reports numerical experiments designed to assess the accuracy and
convergence behaviour of the proposed fractional backward Zaky--Jacobi
spectral-collocation method. The numerical errors are measured in the
\(L^{\infty}(J)\)-norm and in weighted \(L^{2}\)-norms induced by the
Zaky--Jacobi weights.

\begin{example}\label{ex:right-sided-example}
Consider the exact solution
\[
u(t)=(1-t)^{-\theta}\sin(1-t),
\]
with \(K(t,\varrho)=1\). The corresponding source term is given by
\begin{equation}\label{eq:right-sided-g}
\begin{aligned}
g(t)
&=
(1-t)^{-\theta}\sin(1-t)
-
\sqrt{\pi}\Gamma(1-\theta)
(1-t)^{\frac{1}{2}-\theta}
\sin\left(\frac{1-t}{2}\right)
\mathcal{B}\left(\frac{1}{2}-\theta,\frac{1-t}{2}\right),
\end{aligned}
\end{equation}
where \(\mathcal{B}(\theta,t)=J_{\theta}(t)\) denotes the Bessel function of
the first kind, written as
\begin{equation}\label{eq:right-sided-bessel}
\mathcal{B}(\theta,t)
=
\left(\frac{t}{2}\right)^{\theta}
\sum_{i=0}^{+\infty}
\frac{(-t^{2})^{i}}
{i!\Gamma(\theta+i+1)4^{i}} .
\end{equation}

This exact solution has a weak singularity at the terminal endpoint \(t=1\).

In Fig.~\ref{fig:right-sided-example}, the errors in the
\(L^{\infty}(J)\)- and
\(L^{2}_{\varkappa^{-1/4,-1/4,\rho}}(J)\)-norms are displayed on a
semi-logarithmic scale as functions of the \(\rho\)-polynomial degree \(N\),
for
\(\theta=\frac{1}{2},\frac{2}{3}\).
The computations are carried out with
\[
\rho=\frac{1}{2},\frac{1}{4}
\quad \text{for} \quad
\theta=\frac{1}{2},
\]
and
\[
\rho=\frac{1}{3},\frac{1}{6}
\quad \text{for} \quad
\theta=\frac{2}{3}.
\]

The obtained results demonstrate exponential convergence of the proposed
fractional backward Zaky--Jacobi spectral method. This behaviour is consistent
with the theoretical prediction, since for the selected values of \(\theta\)
and \(\rho\), the transformed function
\[
u\!\left(1-(1-t)^{1/\rho}\right)
\]
is smooth.

\begin{figure}[h!]
\centering
 \includegraphics[width=0.48\textwidth]{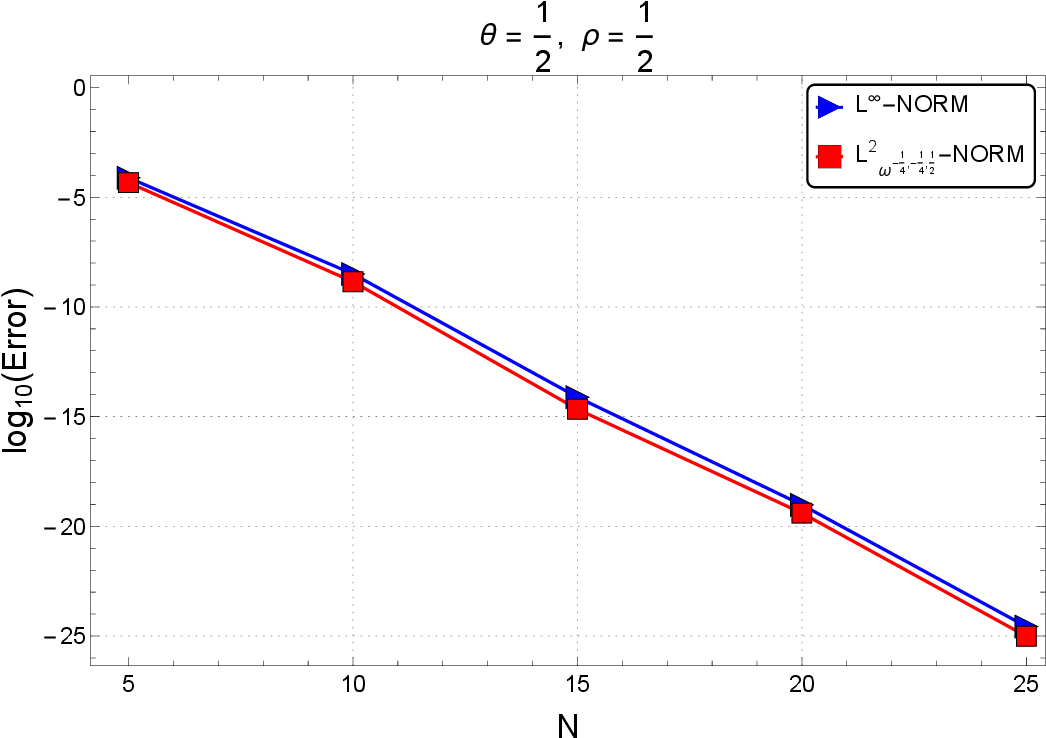}
 \includegraphics[width=0.48\textwidth]{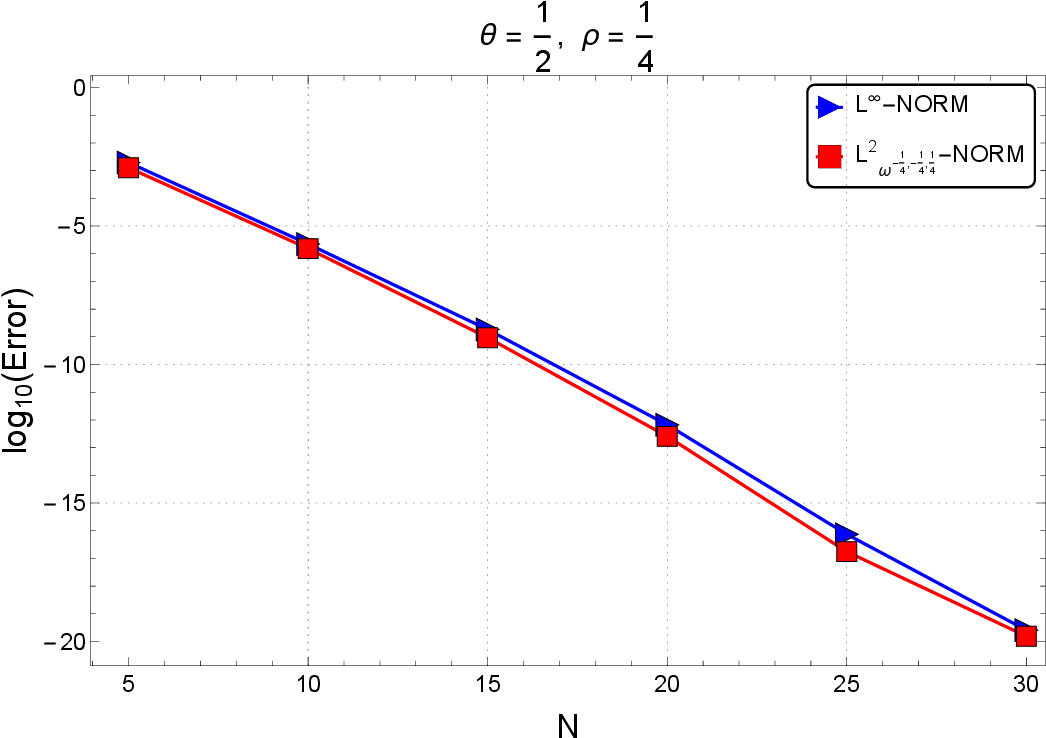}
 \includegraphics[width=0.48\textwidth]{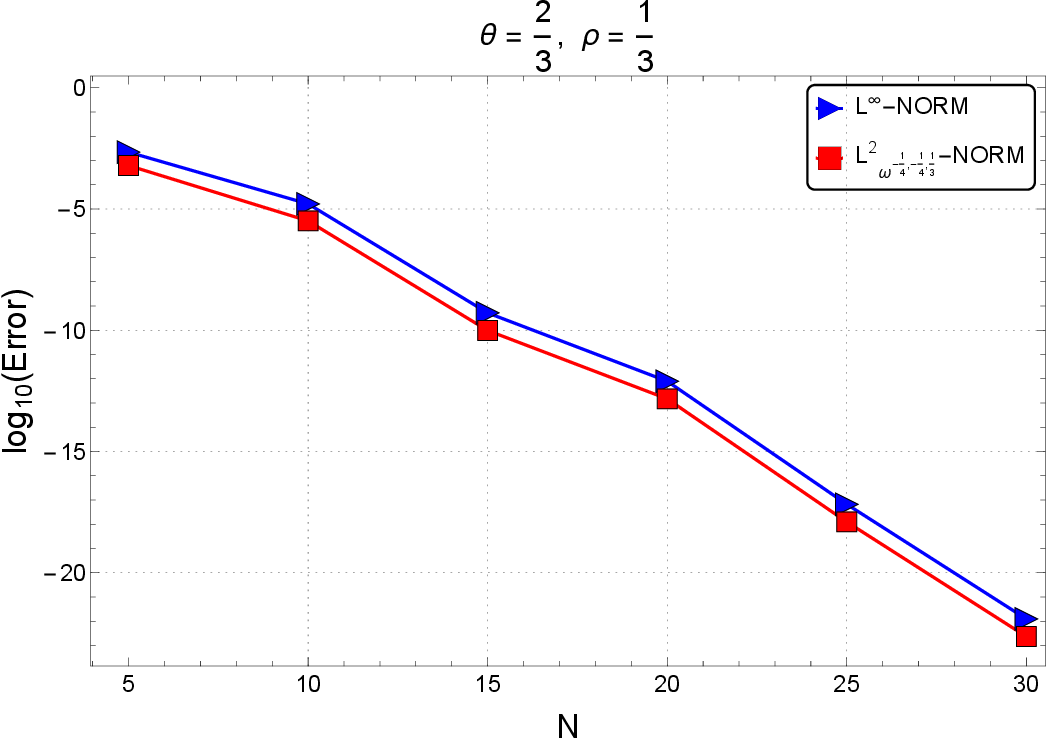}
 \includegraphics[width=0.48\textwidth]{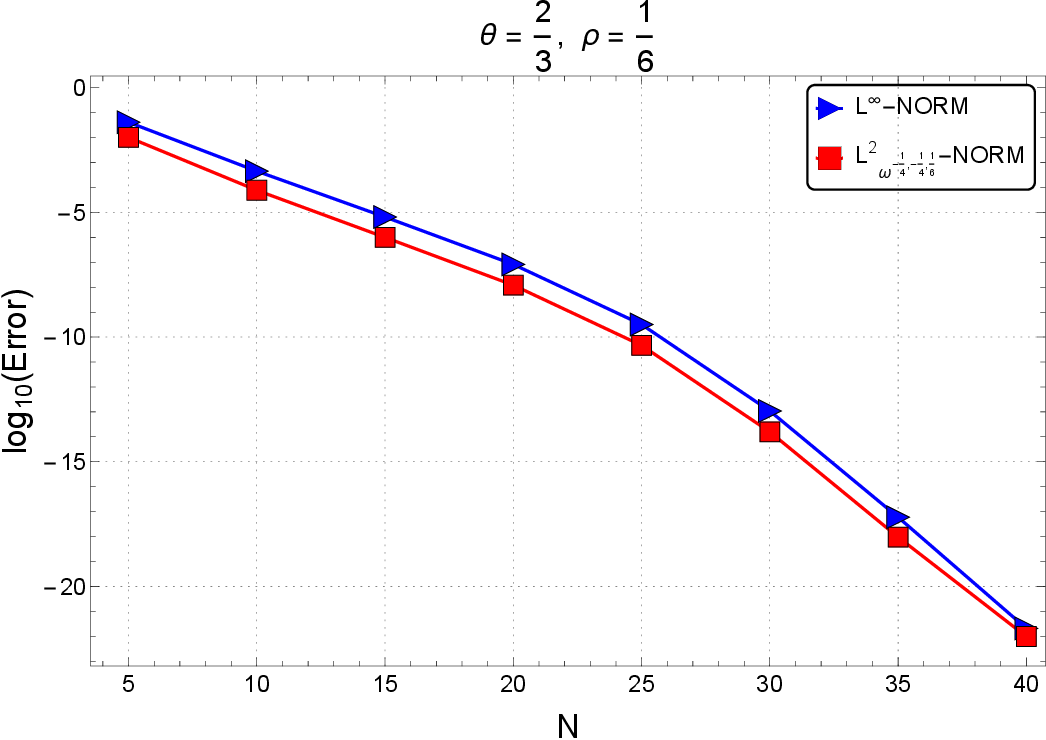}
\caption{
Errors in the \(L^{\infty}(J)\)- and
\(L^{2}_{\varkappa^{-1/4,-1/4,\rho}}(J)\)-norms versus the
\(\rho\)-polynomial degree \(N\):
(a) \(\theta=0.5,\rho=\frac{1}{2}\);
(b) \(\theta=0.5,\rho=\frac{1}{4}\);
(c) \(\theta=\frac{2}{3},\rho=\frac{1}{3}\);
(d) \(\theta=\frac{2}{3},\rho=\frac{1}{6}\).
}
\label{fig:right-sided-example}
\end{figure}
\end{example}

\begin{example}\label{ex:right-sided-test}
We next examine two terminal-singular solutions of
problem~\eqref{eq:3.1R} with \(K(t,\varrho)=1\). The two cases are
\[
\text{Case \rm (i)}\quad
u(t)=(1-t)^{\gamma_1}+(1-t)^{\gamma_2},
\]
and
\[
\text{Case \rm (ii)}\quad
u(t)=\sin\!\left((1-t)^{\gamma_1}+(1-t)^{\gamma_2}\right).
\]
These test problems are used to evaluate the ability of the proposed
fractional backward Zaky--Jacobi spectral method to approximate solutions with
limited regularity near the terminal endpoint \(t=1\).

For arbitrary choices of \(\gamma_1\) and \(\gamma_2\), the transformed
function
\[
u\!\left(1-(1-t)^{1/\rho}\right)
\]
need not be smooth. Nevertheless, its regularity can be described in terms of
the fractional powers appearing in the exact solution. In particular, direct
inspection gives
\[
u\!\left(1-(1-t)^{1/\rho}\right)
\in
B^{2\gamma/\rho+\upsilon+1-\varepsilon}_{\varkappa^{\mu,\upsilon,1}}(J),
\qquad \forall\,\varepsilon>0,
\]
where
\[
\gamma =
\begin{cases}
\infty, & \gamma_1,\gamma_2 \in \mathbb{N},\\[2mm]
\gamma_1, & \gamma_1 \notin \mathbb{N}\ \text{and}\ \gamma_2 \in \mathbb{N},\\[2mm]
\gamma_2, & \gamma_1 \in \mathbb{N}\ \text{and}\ \gamma_2 \notin \mathbb{N},\\[2mm]
\min\{\gamma_1,\gamma_2\}, & \text{in all remaining cases}.
\end{cases}
\]
Here \(B^{s}_{\varkappa^{\mu,\upsilon,1}}(J)\) denotes the weighted Sobolev
space associated with the classical Jacobi weight.

The numerical errors are plotted on a semi-logarithmic scale in
Figs.~\ref{fig:combined-errors-right-sided-case-i} and
\ref{fig:combined-errors-right-sided-case-ii} for several values of
\(\theta\) and \(\rho\), with
\[
\gamma_1=\sqrt{2},\qquad \gamma_2=\sqrt{3}.
\]
Since \(K(t,\varrho)=1\) is smooth, the kernel approximation error in the
theoretical estimates is dominated, for large \(N\), by the approximation
error of the exact solution. Consequently, this experiment primarily
illustrates how the regularity of
\[
u\!\left(1-(1-t)^{1/\rho}\right)
\]
affects the observed convergence rate.
\end{example}

\begin{figure}[h!]
\centering

\begin{subfigure}{0.48\textwidth}
    \centering
    \includegraphics[width=\textwidth]{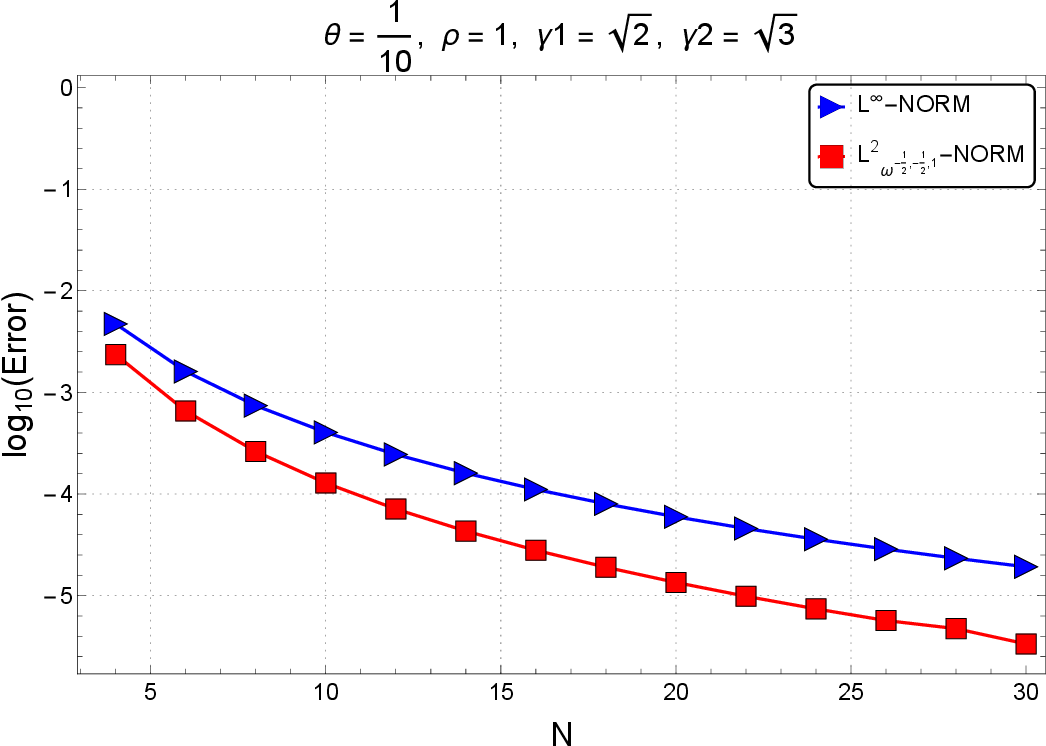}
\end{subfigure}
\hfill
\begin{subfigure}{0.48\textwidth}
    \centering
    \includegraphics[width=\textwidth]{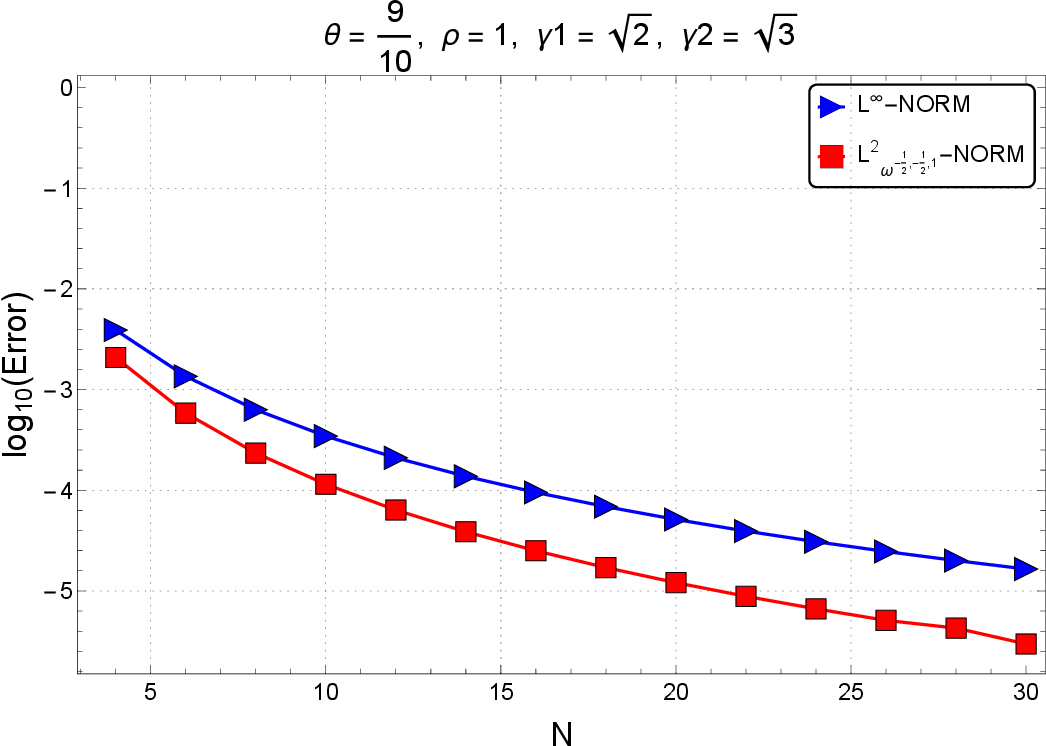}
\end{subfigure}

\vspace{0.35cm}

\begin{subfigure}{0.48\textwidth}
    \centering
    \includegraphics[width=\textwidth]{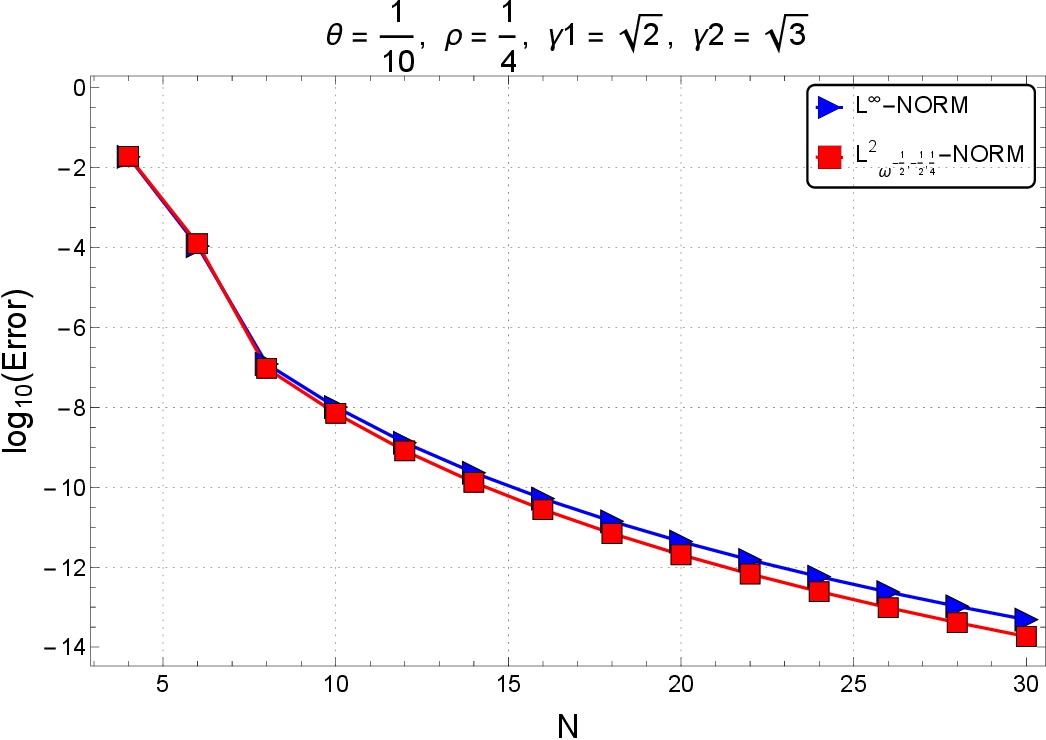}
\end{subfigure}
\hfill
\begin{subfigure}{0.48\textwidth}
    \centering
    \includegraphics[width=\textwidth]{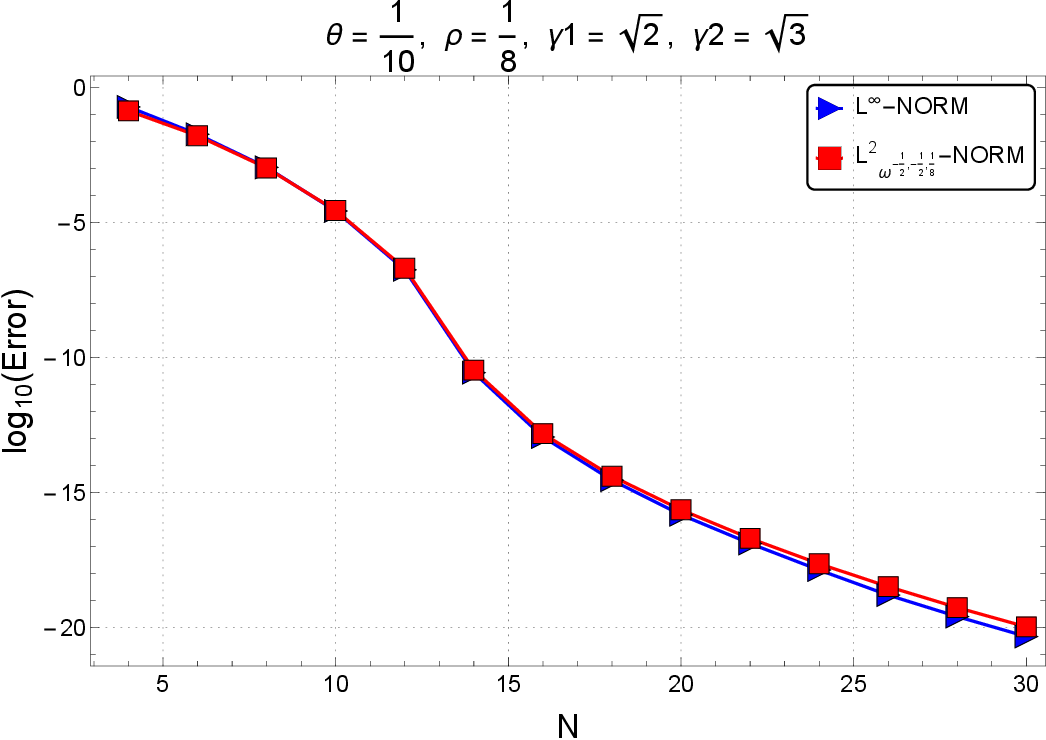}
\end{subfigure}

\vspace{0.35cm}

\begin{subfigure}{0.48\textwidth}
    \centering
    \includegraphics[width=\textwidth]{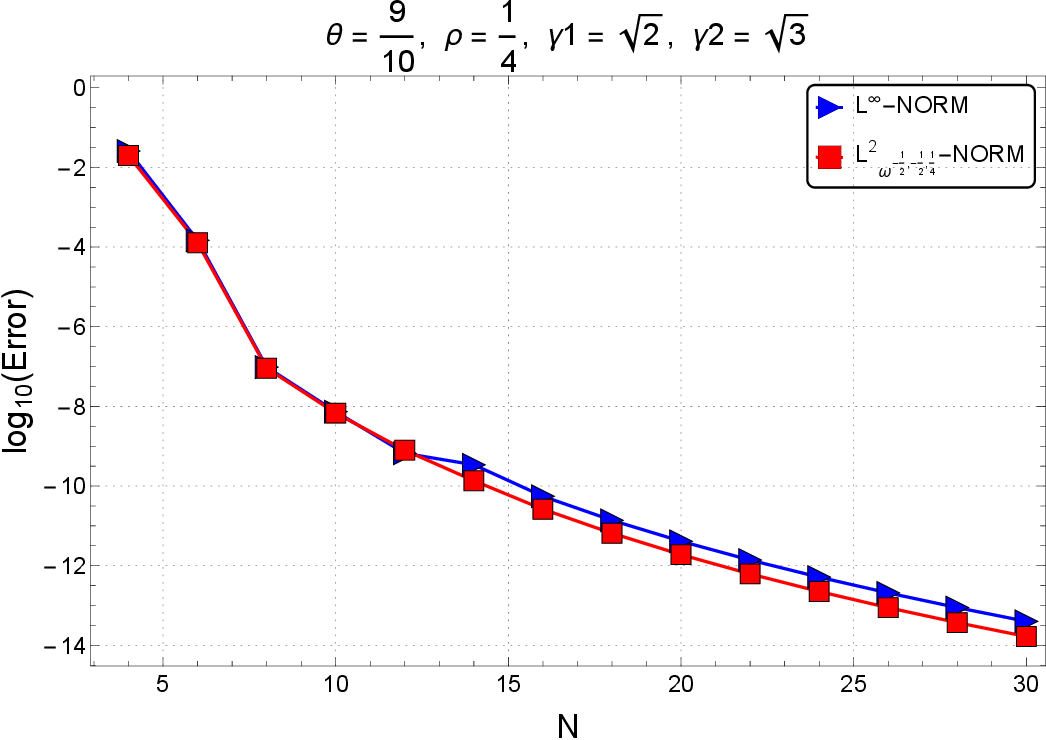}
\end{subfigure}
\hfill
\begin{subfigure}{0.48\textwidth}
    \centering
    \includegraphics[width=\textwidth]{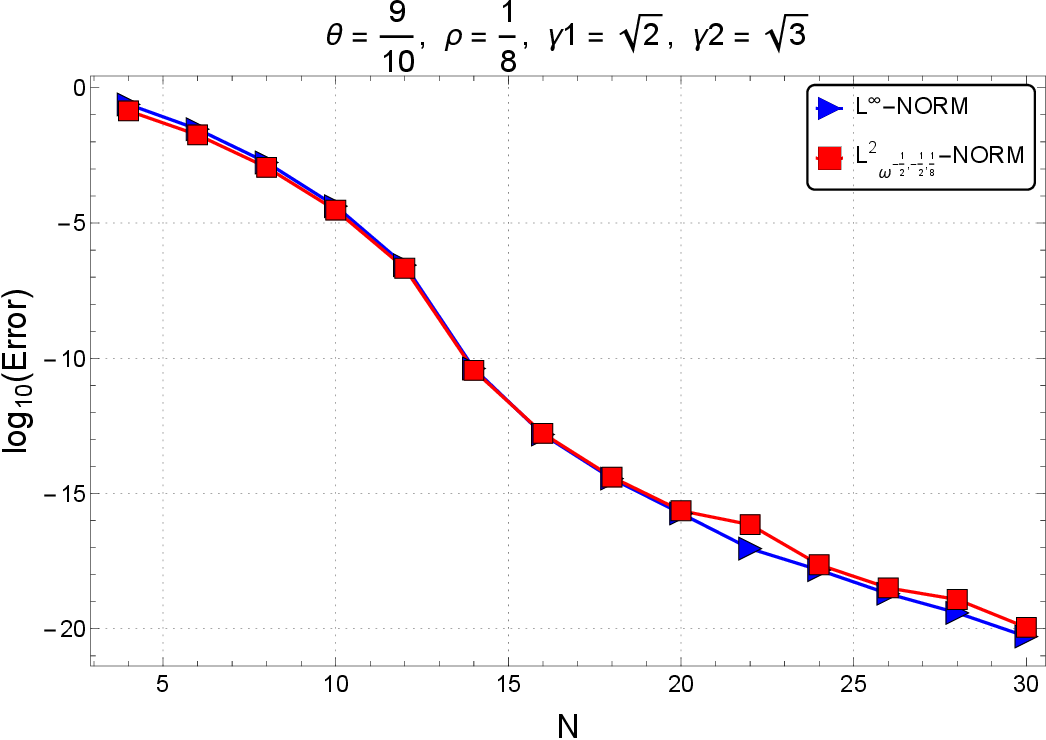}
\end{subfigure}

\caption{
Comparison of the weighted
\(L^{2}_{\varkappa^{0,0,\rho}}(J)\)-error and the
\(L^{\infty}(J)\)-error for the test problem in
case \({\rm (i)}\), with
\(\gamma_{1}=\sqrt{2}\), \(\gamma_{2}=\sqrt{3}\), and different values of
\(\theta\) and \(\rho\).
}
\label{fig:combined-errors-right-sided-case-i}
\end{figure}

\begin{figure}[h!]
\centering

\begin{subfigure}{0.48\textwidth}
    \centering
    \includegraphics[width=\textwidth]{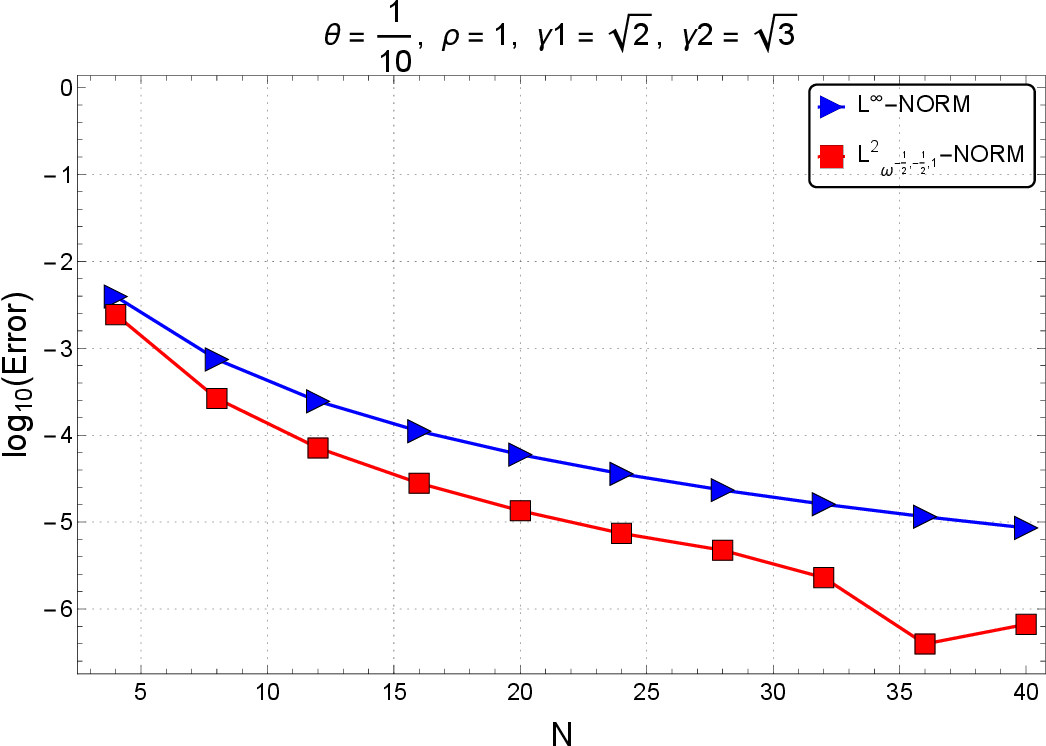}
\end{subfigure}
\hfill
\begin{subfigure}{0.48\textwidth}
    \centering
    \includegraphics[width=\textwidth]{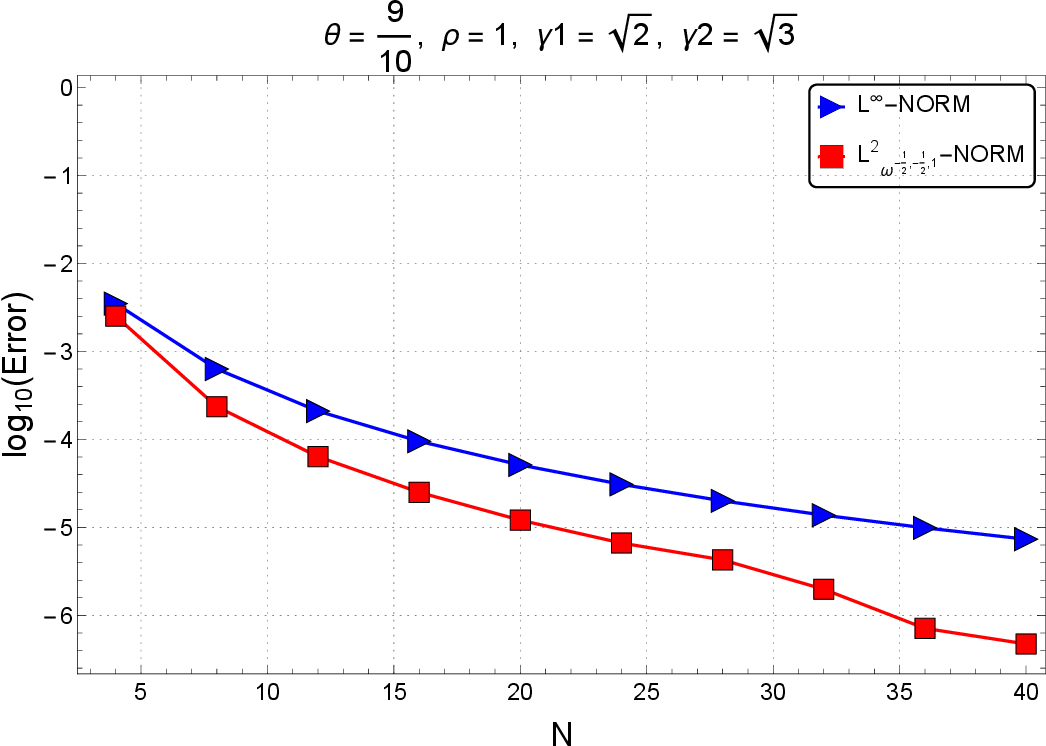}
\end{subfigure}

\vspace{0.35cm}

\begin{subfigure}{0.48\textwidth}
    \centering
    \includegraphics[width=\textwidth]{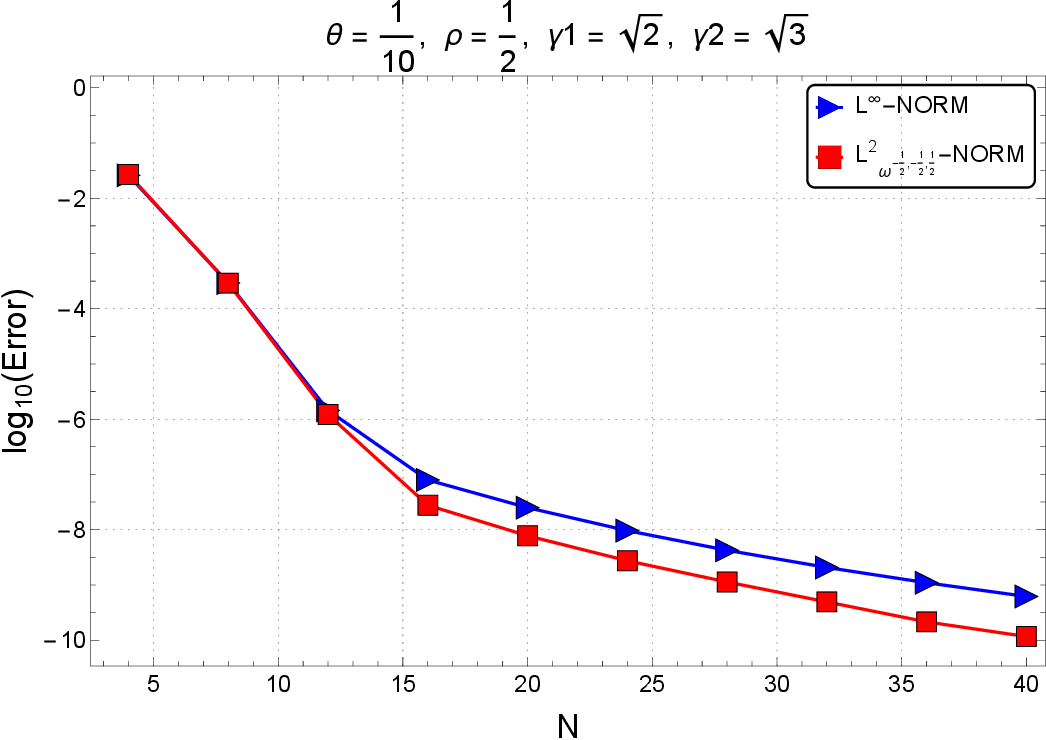}
\end{subfigure}
\hfill
\begin{subfigure}{0.48\textwidth}
    \centering
    \includegraphics[width=\textwidth]{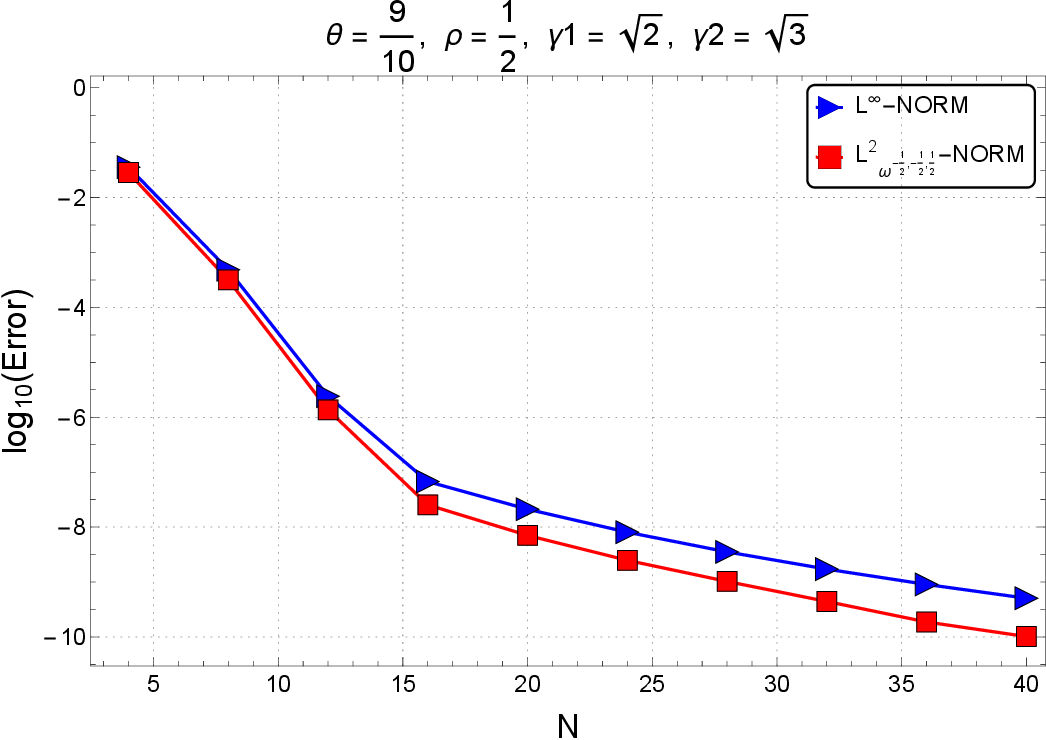}
\end{subfigure}

\vspace{0.35cm}

\begin{subfigure}{0.48\textwidth}
    \centering
    \includegraphics[width=\textwidth]{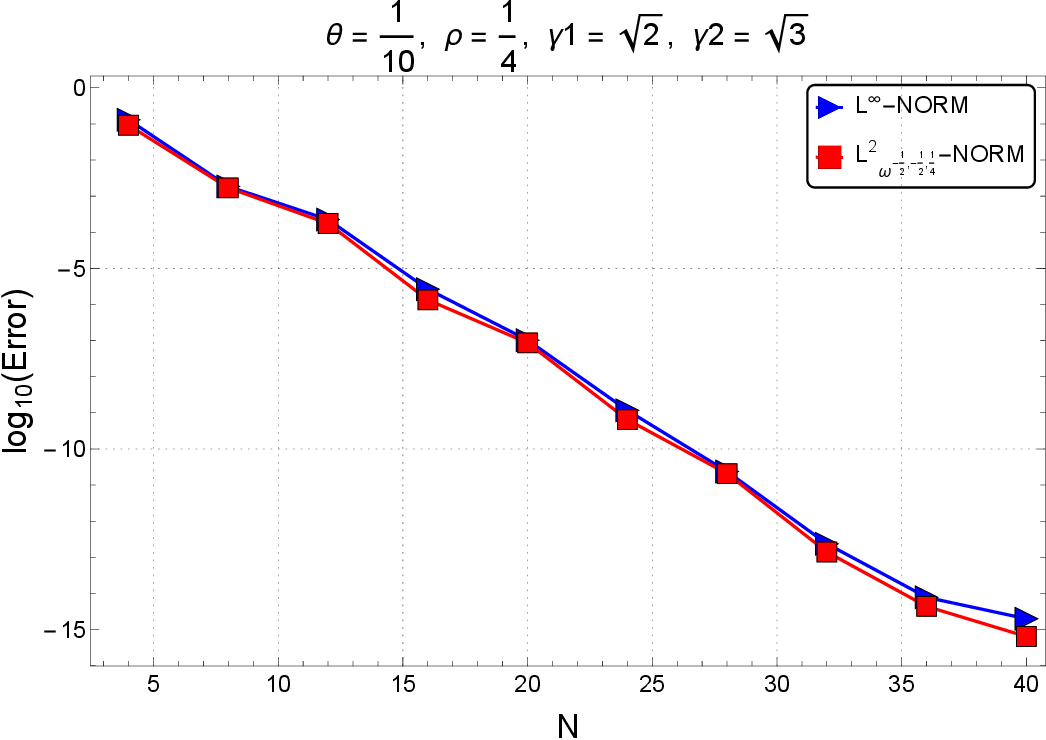}
\end{subfigure}
\hfill
\begin{subfigure}{0.48\textwidth}
    \centering
    \includegraphics[width=\textwidth]{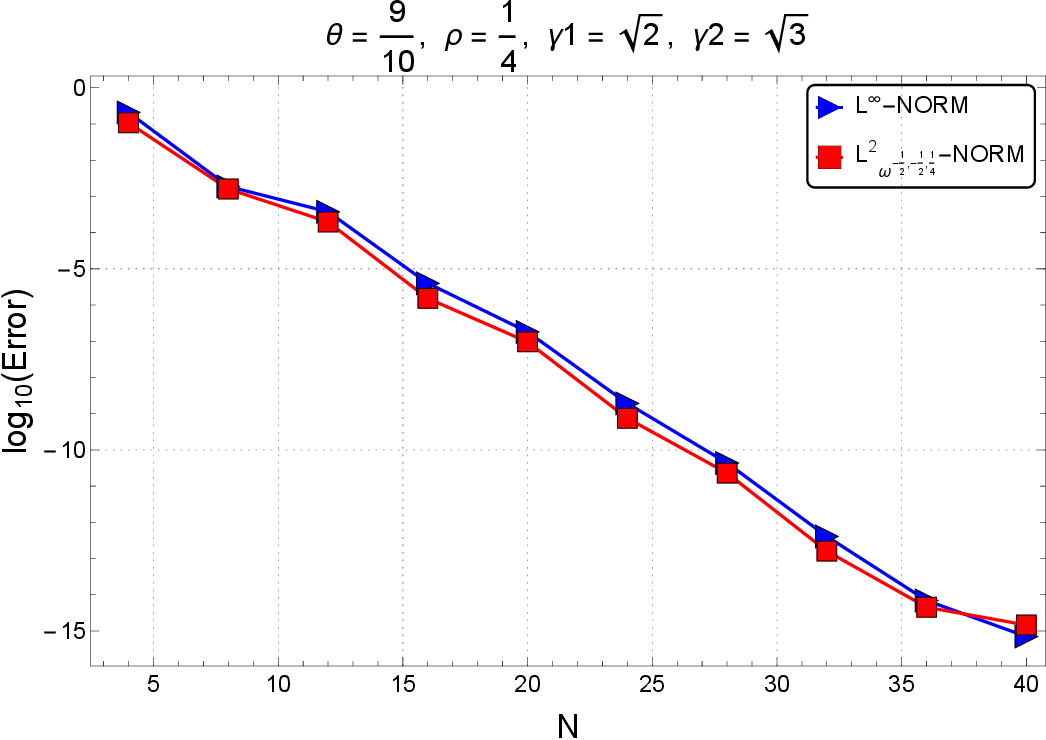}
\end{subfigure}

\caption{
Comparison of the weighted
\(L^{2}_{\varkappa^{0,0,\rho}}(J)\)-error and the
\(L^{\infty}(J)\)-error for the test problem in
case \({\rm (ii)}\), with
\(\gamma_{1}=\sqrt{2}\), \(\gamma_{2}=\sqrt{3}\), and different values of
\(\theta\) and \(\rho\).
}
\label{fig:combined-errors-right-sided-case-ii}
\end{figure}

\section{Conclusion}\label{sec7}

In this paper, we developed a fractional backward Zaky--Jacobi approximation
framework for functions with limited regularity near the terminal endpoint.
The proposed basis is constructed by composing classical Jacobi polynomials
with a backward endpoint algebraic mapping, which enables the approximation
space to capture the intrinsic terminal-endpoint singular structure of the
solution. This feature allows the method to overcome the deterioration in
accuracy typically observed when classical smooth polynomial bases are applied
to weakly regular functions.

A rigorous approximation theory was established for the fractional backward
Zaky--Jacobi functions. We derived their main structural properties, including
orthogonality relations, recurrence formulae, derivative identities, and the
associated singular Sturm--Liouville eigenvalue problem. We also introduced the
corresponding weighted Sobolev spaces generated by the backward transformed
derivative \(\mathcal{D}_{\rho}\), which provide a natural functional setting
for terminal-endpoint singular approximation.

Within this framework, we analyzed the associated
\(L^{2}\)-orthogonal projection and Zaky--Jacobi--Gauss interpolation
operators. Error estimates were proved in weighted norms for both the
approximation error and its transformed derivatives. In addition, inverse
inequalities, weighted Sobolev inequalities, and H\"older-type estimates for
the weakly singular adjoint Volterra integral operator were obtained. These
results form the analytical foundation for the stability and convergence
analysis of the proposed fractional backward spectral-collocation method.

The developed theory was then applied to a weakly singular adjoint Volterra
integral equation. A fully discrete fractional backward spectral-collocation
scheme was constructed by combining the Zaky--Jacobi interpolation operator
with a Jacobi--Gauss quadrature rule adapted to the weak singularity of the
kernel. The resulting error analysis established convergence in both the
\(L^{\infty}(J)\)-norm and the weighted
\(L^{2}_{\varkappa^{\mu,\upsilon,\rho}}(J)\)-norm. The numerical experiments
confirmed the theoretical results and demonstrated the effectiveness of the
proposed method for problems whose solutions exhibit terminal-endpoint weak
singularities.

Overall, the proposed fractional backward Zaky--Jacobi framework provides a
robust and analytically justified basis for high-order approximation of
terminal-endpoint singular functions and for the construction of accurate
spectral-collocation methods for weakly singular adjoint integral equations.

\section*{Data availability statement}
No datasets were generated or analyzed during the current study.

\section*{Declarations}

\section*{Conflict of interest}
The authors declare that they have no conflict of interest.


\bibliographystyle{elsart-num-sort}
		
		
		\bibliography{Bibfileamc}


\end{document}